\tikzset{snake it/.style={decorate, decoration=snake}}
\newtheorem{theorem}{Theorem}[section]
\newtheorem{lemma}[theorem]{Lemma}
\newtheorem{remark}[theorem]{Remark}
\newtheorem{proposition}[theorem]{Proposition}
\newtheorem{prop}[theorem]{Proposition}
\newtheorem{corollary}{Corollary}
\newtheorem{example}[theorem]{Example}
\newcommand{\RR}{\mathbb{R}}
\let\oldtocsection=\tocsection
\let\oldtocsubsection=\tocsubsection
\title[Foams, iterated wreath products, field extensions, Sylvester sums]{Foams, iterated wreath products, field extensions and Sylvester sums}
\author[Mee Seong Im and Mikhail Khovanov (appendix by Lev Rozansky)]{Mee Seong Im and Mikhail Khovanov (with an appendix by Lev Rozansky)}
\address{\parbox{\linewidth}{Department of Mathematics, Johns Hopkins University, Baltimore, MD 21218, USA\\ 
Department of Mathematics, United States Naval Academy, Annapolis, MD 21402 USA}\\[7pt] 
\parbox{\linewidth}{Department of Mathematics, Johns Hopkins University, Baltimore, MD 21218, USA\\ 
Department of Mathematics, Columbia University, New York, NY 10027, USA}\\[7pt]
Department of Mathematics, University of North Carolina, Chapel Hill, NC 27599, USA \\
}
\date{\today}
\providecommand{\keywords}[1]{\textbf{\textit{Key words and phrases.}} #1}
\keywords{Iterated wreath products, categorification, Frobenius algebras, field extensions, separable extensions, matrix factorizations, Sylvester sums, foam evaluation, defect TQFTs}
\begin{document}

\newcommand{\R}{\mathbb{R}}
\newcommand{\Q}{\mathbb{Q}}
\newcommand{\Z}{\mathbb{Z}}
\newcommand{\N}{\mathbb{N}}
\newcommand{\C}{\mathbb{C}}
\newcommand{\V}{\mathbf{V}}
\newcommand{\FF}{\mathsf{FF}}

\renewcommand\SS{\ensuremath{\mathbb{S}}}

\renewcommand{\l}{\lbrace}
\renewcommand{\r}{\rbrace}

\newcommand{\lra}{\longrightarrow}
\newcommand{\Ext}{\mathsf{Ext}}
\newcommand{\Fl}{\mathsf{Fl}}
\newcommand{\GL}{\mathsf{GL}}
\newcommand{\Gr}{\mathsf{Gr}}
\newcommand{\Hom}{\mathsf{Hom}}
\newcommand{\Ind}{\mathsf{Ind}}
\newcommand{\MF}{\mathsf{MF}}
\newcommand{\HMF}{\mathsf{HMF}}
\newcommand{\Res}{\mathsf{Res}}
\newcommand{\SL}{\mathsf{SL}}
\newcommand{\Syl}{\mathsf{Syl}}
\newcommand{\Sym}{\mathsf{Sym}}
\newcommand{\triv}{\mathsf{triv}}
\newcommand{\Id}{\mathsf{Id}}
\newcommand{\mc}{\mathcal}
\newcommand{\mcR}{\mathcal{R}}
\newcommand{\mf}{\mathfrak}
\newcommand{\mcC}{\mathcal{C}}
\newcommand{\mcO}{\mathcal{O}}
\newcommand{\U}{\mathsf{U}}

\newcommand{\Cat}{\mathsf{Cat}}
\newcommand{\End}{\mathsf{End}}
\newcommand{\tr}{\mathsf{tr}}
\newcommand{\trG}{\mathsf{tr}_{\mathsf{Gr}}} 

\renewcommand{\det}{\mathsf{det}}
\newcommand{\undbeta}{\underline{\beta}}
\newcommand{\mcF}{\mathcal{F}}
\newcommand{\Gal}{\mathsf{Gal}}
\newcommand{\mchar}{\mathsf{char}} 
\newcommand{\Fr}{\mathsf{Fr}}
\newcommand{\mthH}{\mathsf{H}} 
\newcommand{\one}{\mathbf{1}} 
\newcommand{\nchar}{\mathsf{char}\,} 

\newcommand{\myrightleftarrows}[1]{\mathrel{\substack{\xrightarrow{#1} \\[-.9ex] \xleftarrow{#1}}}}

\newcommand{\CC}{\mathbb{C}}
\newcommand{\kk}{\mathbf{k}} 
\newcommand{\ukk}{\underline{\kk}} 
\newcommand{\undx}{\underline{x}}
\newcommand{\undy}{\underline{y}}
\newcommand{\undz}{\underline{z}}
\newcommand{\unda}{\underline{a}}

\newcommand{\okk}{\overline{\kk}}  
\newcommand{\okksep}{\overline{\kk}^{sep}} 
 
\newcommand{\indf}{\mathsf{Ind}} 
\newcommand{\resf}{\mathsf{Res}} 
\newcommand{\gdim}{\mathsf{gdim}} 
\newcommand{\rk}{\mathsf{rk}}
\newcommand{\mmat}{\mathrm{Mat}}

\newcommand{\Pa}{\mathsf{Pa}}   
\newcommand{\Cob}{\mathsf{Cob}} 
\newcommand{\Cobtwo}{\Cob_2}   
\newcommand{\Kob}{\mathsf{Kob}}
\newcommand{\Cobal}{\Cob_{\alpha}}  

\newcommand{\cobal}{\mathsf{Cob}_{\alpha}} 

\newcommand{\udcob}{\underline{\mathsf{DCob}}}

\newcommand{\dmod}{\mathsf{-mod}}   
\renewcommand{\pmod}{\mathsf{-pmod}}    

\newcommand{\oplusop}[1]{{\mathop{\oplus}\limits_{#1}}}
\newcommand{\ang}[1]{\langle #1 \rangle } 
\newcommand{\bbn}[1]{\mathbb{B}^{#1}}

\newcommand{\pseries}[1]{\kk\llbracket #1 \rrbracket}
\newcommand{\rseries}[1]{R\llbracket #1 \rrbracket}
\newcommand{\ovfi}[1]{\overline{F_{#1}}}
\newcommand{\delcatv}[1]{\mathrm{Rep}(S_{#1})}
\newcommand{\delcatbar}[1]{\underline{\mathrm{Rep}}(S_{#1})}
\newcommand{\undrep}{\underline{\mathsf{Rep}}}
\newcommand{\mfg}{\mathfrak{g}} 
\newcommand{\brak}[1]{\langle #1 \rangle}  



\newcommand{\cC}{{\mathcal C}} 
\newcommand{\cA}{{\mathcal A}}
\newcommand{\cZ}{{\mathcal Z}}
\newcommand{\cD}{{\mathcal D}}
\newcommand{\M}{{\mathcal M}}
\newcommand{\be}{{\bf 1}}
\newcommand{\bl}{{\bf s}}

\newcommand{\eps}{{\varepsilon}}
\newcommand{\sq}{$\square$}
\newcommand{\bi}{\bar \imath}
\newcommand{\bj}{\bar \jmath}
\newcommand{\Ve}{\mbox{Vec}}
\newcommand{\sVec}{\mbox{sVec}}
\newcommand{\Rep}{\mathrm{Rep}}
\newcommand{\Ver}{\mbox{Ver}}
\newcommand{\Mod}{\mbox{Mod}}
\newcommand{\Bimod}{\mbox{Bimod}}
\newcommand{\id}{\mbox{id}}
\newcommand{\inv}{\mbox{inv}}
\newcommand{\ot}{\otimes}
\newcommand{\iHom}{\underline{\mbox{Hom}}}

\maketitle

\begin{abstract}
 Certain foams and relations on them are introduced to interpret functors and natural transformations in categories of representations of iterated wreath products of cyclic groups of order two. We also explain how patched surfaces with defect circles and foams relate to separable field extensions and Galois theory and explore a relation between overlapping foams and Sylvester double sums. In the appendix, joint with Lev Rozansky, we compare traces in two-dimensional TQFTs coming from matrix factorizations with those in field extensions.   
\end{abstract}

\tableofcontents

%
%

\section{Introduction} 
\label{section:intro}

The goal of this work is to explore possible interactions between the theory of decorated 2-dimensional complexes and parts of representation theory of finite groups, Galois theory and the theory of resultants. We refer to decorated 2-dimensional complexes as foams, usually imposing local structure requirements on these foams, including labeling of its zero, one, and two-dimensional facets and the presence of defects, such as zero and one-dimensional defects on facets and one-dimensional defects on seams of a foam. 

The existence of foam interpretation has been missing from these fields, and the present paper is a largely informal introduction to such an interpretation. This approach can be motivated by the multitude of biadjoint functors present in these theories, existence of many interesting natural transformations between compositions of these functors, and possibility to form exterior tensor products (i.e., via the direct product of groups).
Together, these properties hint at an interpretation of natural transformations between the suitable functors in the above theories via two-dimensional topological structures with additional decorations and singularities. 

Induction and restriction functors between categories of representations of finite groups are biadjoint and natural transformations between their compositions have a graphical presentation via systems of oriented arcs and circles in the plane, see~\cite{Kh3}.  In these diagrams regions are labeled by finite groups and lines by inclusions of groups. More generally, two-sided adjoint functors have a graphical interpretations via isotopies of arcs in the plane~\cite{Kh1,La1,La2,KL,KQ}.

In Section~\ref{sec-foams} we explain how such planar diagrams can be refined to foams when some of the groups have a direct product decomposition. Sections~\ref{foams} and~\ref{sec_defect} treat a special case when the groups are iterated wreath products of the symmetric group $S_2$ (equivalently, cyclic group $C_2$).

Iterated wreath products have been extensively studied in the last several decades. For some background on wreath products, see, for example,~\cite[Chapter 1]{Me} and \cite[Chapter 7]{Ro}. For representation-theoretic aspects of wreath products, see~\cite{Ke, CST, OOR, IW1, IW2, IO}. 
Natural transformations between compositions of induction and restriction functors between iterated wreath products of $S_2$ can be depicted by suitable foams. Facets of this foam labeled $n$ correspond to the $n$-th iterated wreath product group $G_n$, the group of symmetries of a full binary tree of depth $n$. Seams correspond to the induction and restriction for the inclusion $G_n\times G_n\subset G_{n+1}$ as an index two subgroup. Graphical calculus for these foams and its relation to representation theory of $G_n$ are developed in Sections~\ref{foams} and~\ref{sec_defect}. 

These foams are different from $\SL(N)$ or $\GL(N)$ foams. The latter are commonly used in link homology and categorification.
In particular, they can be used to describe the Soergel category~\cite{RWe,We,RW2} and to construct $\GL(N)$ link homology via foam evaluation~\cite{Kh2,RW1}; they also appear in categorified quantum groups~\cite{QR}. 

In Section~\ref{galois_extensions}, we explain how automorphisms of a commutative Frobenius algebra give rise to a decorated two-dimensional topological quantum field theory (2D TQFT) with defect circles. A further refinement is sometimes possible, along the lines of Turaev's homotopy quantum field theories (QFTs)~\cite{Tu1,Tu2,MS}, Landau--Ginzburg orbifolds~\cite{IV,BH,BR,LS,KW}, and orbifolded Frobenius algebras \cite{Ka}. We describe a useful way to encode a representation of the fundamental group of a surface, Poincar\'e dual to the standard description. We explain that surfaces in decorated TQFTs that come from separable field extensions and the standard trace on them admits a straightforward evaluation.

\vspace{0.1in} 

Section~\ref{sec-sylvester} contains a couple of curious connections of foams to Galois theory  and to  polynomial interpolation. Suppose given a degree $N$ irreducible polynomial $f(x)$ over a ground field $\kk$ with the maximal for that degree Galois group $\Gal(F/\kk)\cong S_N$, where $F$ is the splitting field. In Section~\ref{subsec_base_change} we identify $F$ and suitable intermediate fields of the extension with state spaces of MOY theta-webs, upon a  base change from the symmetric functions to $\kk$ via  coefficients of $f(x)$. In Section~\ref{subsec_over_sylv} we interpret the Sylvester double sums~\cite{Sy} that describe subresultants and related identities and expressions in the field of polynomial interpolation~\cite{DHKS,DKSV,KSV} via evaluation of overlapping foams.   

\vspace{0.1in} 

In the appendix (Section~\ref{subsec_mf}), written jointly with Lev Rozansky, we connect evaluations of closed surfaces in  2D TQFTs that come from matrix factorizations and Landau--Ginzburg models with the ones discussed in the present paper coming from field extensions. The connection is given by the formula \eqref{eq_hessian} and depicted in Figure~\ref{fig5_3_2}. It allows to express the field extension evaluation via that for the Landau-Ginzburg model, showing that the latter is at least as informative as the former.  

\subsection*{Acknowledgments}
\label{subsection:ackn}
The authors are grateful to Johan De Jong, Louis-Hadrien Robert, Alvaro Martinez Ruiz and Lev Rozansky for valuable discussions. 
The authors also thank the extremely thorough reports from the anonymous referee.
M.S.I. was partially supported by Naval Academy Research Council (NARC) at Annapolis, MD, and M.K. was partially supported by NSF grants DMS-1807425, DMS-2446892  and Simons Collaboration Award 994328 while working on this paper.

%
%

\section{Foams and functors between representation categories of direct products of groups}
\label{sec-foams}

\subsection{Diagrammatics of induction and restriction}
\label{subsection:diagrammatics-ind-res}
Given an inclusion of finite groups $H\subset G$ (or, more generally, an inclusion with $H$ of finite index in $G$) and a ground field $\kk$, induction and restriction functors $\indf_H^G$ and $\resf_G^H$ between categories of $\kk H$-modules and $\kk G$-modules are biadjoint, that is, adjoint on both the left and the right. Diagrammatics of  biadjoint functors for induction and restriction of finite groups is explained in~\cite[Section 3.2]{Kh3} and in~\cite{Kh1,La2,KL,KQ} in general.  Natural transformations between compositions of these functors can be depicted by planar diagrams of arcs and circles in the plane with regions labeled by $G$ and $H$ in a checkerboard fashion. 

\vspace{0.1in}

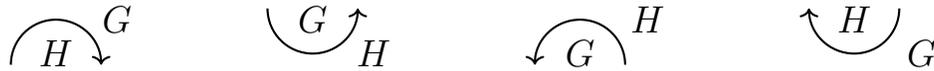
\begin{figure}
    \begin{center}
    \begin{tikzpicture}[scale=0.6]
    \begin{scope}[shift={(0,-0.5)}]
    \draw[thick,->] (0,0) arc (180:0:1cm);
    \node at (1,0.25) {\Large $H$};
    \node at (2.35,1) {\Large $G$};
    \end{scope}     
   
\begin{scope}[shift={(6,0.5)}]
    \draw[thick,->] (0,0) arc (180:360:1cm);
    \node at (1,-0.25) {\Large $G$};
    \node at (2.35,-1) {\Large $H$};
\end{scope}   

\begin{scope}[shift={(14,-0.5)}]
    \draw[thick,->] (0,0) arc (0:180:1cm);
    \node at (-1,0.25) {\Large $G$};
    \node at (.5,1) {\Large $H$};
\end{scope}

\begin{scope}[shift={(20,0.5)}]
    \draw[thick,->] (0,0) arc (360:180:1cm); 
    \node at (-1,-0.25) {\Large $H$};
    \node at (0.5,-1) {\Large $G$};
\end{scope}
    \end{tikzpicture}
    \end{center}
    \caption{Oriented cups and caps natural transformations, for induction and restriction between $H$- and $G$-modules, with $H\subset G$ a finite index subgroup.}
    \label{fig_cups_caps}
\end{figure}

Biadjointness can be encoded by four natural transformations that can be depicted by the four oriented cup and cap diagrams in Figure~\ref{fig_cups_caps}. Biadjointness is equivalent to the isotopy invariance of diagrams or arcs and circles built from these diagrams, and the four generating isotopy relations are shown in Figure~\ref{fig_isotopy}. 

\vspace{0.1in}

\begin{figure}
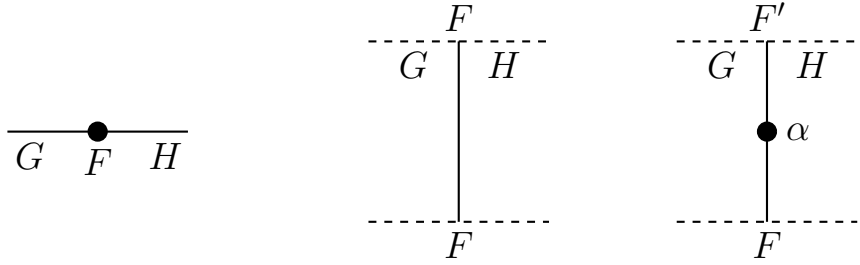

    \begin{center}

\end{center}

\caption{Left: functor $F$. Middle: identity map $\id_F:F \Rightarrow F$. Right: natural transformation $\alpha: F\Rightarrow F'$.}
\label{fig0_0}
\end{figure}

The induction functor and, more generally, a functor $F:\kk H\dmod \lra \kk G\dmod$, for finite groups $H$ and $G$, can be depicted by a dot on a horizontal line, with intervals to the right and left of the dot labeled by $H$ and $G$, respectively, see Figure~\ref{fig0_0} left. The identity natural transformation $\id_F$ of $F$ is depicted by a vertical line in the plane, see Figure~\ref{fig0_0} middle. A natural transformation $\alpha: F\Rightarrow F'$ between two such functors is depicted by a dot on a vertical line, with intervals below and above the dot labeled by $F$ and $F'$, respectively, see Figure~\ref{fig0_0} right. 

\vspace{0.1in} 

In these considerations, it is natural to restrict to functors $F$ that admit biadjoint functors, that is, there exists a functor $\overline{F}$ which is both left and right adjoint to $F$, with biadjointness isomorphisms fixed. This allows to add ``cup" and ``cap" diagrams, their compositions and suitable isotopies to our graphical calculus, see for instance~\cite[Section 3.2]{Kh3} as well as the discussion of isotopies and biadjointness in~\cite{Kh1,La1,La2} and~\cite[Chapter 7]{KQ}.

\subsection{Extending to foams}
\label{subsection:extending-to-foams}
Starting from the planar diagrammatics of induction and restriction functors for finite groups one easily makes one step to its extension to foam diagrammatics for these functors, once direct products of groups are used. 
\vspace{0.1in} 

Suppose that group $H$ is the direct product, $H\cong H_1\times H_2$. Then suitable endofunctors and natural transformations between them in the category of $H$-modules can be reduced to exterior tensor products of those in categories of $H_1$-modules and $H_2$-modules. Diagrammatically, the $H$-plane that carries information about natural transformations of endofunctors in the category of $\kk H$-modules is converted into two parallel planes, one for each term $H_1, H_2$ in the direct product. 

\vspace{0.1in} 

For instance, a natural transformation $\alpha_i: F_i \lra F'_i$ between endofunctors $F_i,F'_i$ in the category of $H_i$-modules can be depicted by a dot on a vertical line in the $H_i$-plane, see Figure~\ref{fig1_0} left, for $i=1,2$. Bottom and top endpoints of the vertical line denote functors $F_i$ and $F'_i$, respectively. 

\vspace{0.1in}

\begin{figure}
\begin{center}
\begin{tikzpicture}[scale=0.65]

\begin{scope}[shift={(0,0)}]

\draw[thick,dashed](-2,0) -- (2,0);

\node at (0,-0.65) {\Large $F_1$};


\draw[thick] (0,0) -- (0,4);

\draw[thick,fill] (0.2,1.5) arc (0:360:.2);

\node at (1,1.5){\Large $\alpha_1$};

\draw[thick,dashed](-2,4) -- (0,4);

\node at (-1,3) {\Large $H_1$};

\draw[thick,dashed](0,4) -- (2,4);


\node at (1,3) {\Large $H_1$};

\node at (0,4.65) {\Large $F_1'$};
\end{scope}

\begin{scope}[shift={(6,0)}]
\draw[thick,dashed](-2,0) -- (2,0);

\node at (0,-0.65) {\Large $F_2$};

\draw[thick](0,0) -- (0,4);
\draw[thick,fill] (0.2,1.5) arc (0:360:.2);
\node at (1,1.5) {\Large $\alpha_2$};

\draw[thick,dashed](-2,4) -- (0,4);

\node at (-1,3) {\Large $H_2$};

\draw[thick,dashed](0,4) -- (2,4);

\node at (1,3) {\Large $H_2$};

\node at (0,4.65) {\Large $F_2'$};
\end{scope}
 
\begin{scope}[shift={(16.5,-1)}]

\draw[thick] (-3,0) -- (3,0);

\node at (0,-0.65) {\Large $F_1$};

\draw[thick] (3,0) -- (3,4);

\draw[thick] (-3,0) -- (-3,4);

\draw[thick] (-3,4) -- (3,4);

\node at (0,4.65) {\Large $F_1'$};

\draw[thick](0,0) -- (0,4);

\draw[thick,fill] (0.2,2.5) arc (0:360:.2);

\node at (1,2.5) {\Large $\alpha_1$};

\draw[thick](-4,1.5) -- (-3,1.5);

\draw[thick,dashed] (-3,1.5) -- (2,1.5);

\draw[thick,dashed] (2,1.5) -- (2,4);

\draw[thick] (2,4) -- (2,5.5);

\draw[thick](-4,1.5) -- (-4,5.5);

\draw[thick](-4,5.5) -- (2,5.5);

\node at (-1,6.20) {\Large $F_2'$};

\draw[thick,dashed](-1,1.5) -- (-1,4);

\draw[thick](-1,4) -- (-1,5.5);

\draw[thick,fill] (-0.8,3) arc (0:360:.2);

\node at (-2,3) {\Large $\alpha_2$};

\node at (-1,0.85){\Large  $F_2$};

\end{scope}
\end{tikzpicture}
\end{center}

\caption{Diagrams of natural transformations $\alpha_1,\alpha_2$ and of their exterior tensor product.}
\label{fig1_0} 
\end{figure}
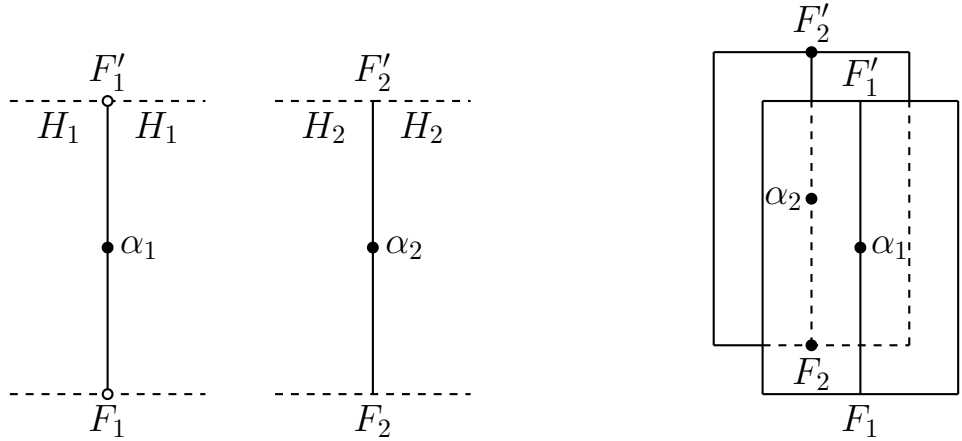

Then the natural transformation
\[\alpha_1\boxtimes \alpha_2 \ : \  F_1\boxtimes F_2 \Rightarrow F'_1 \boxtimes F'_2
\] 
between endofunctors in the category of $H_1\times H_2$-modules can be depicted by placing the two diagrams in parallel next to each other, see Figure~\ref{fig1_0} right. 

\vspace{0.1in}

When some of the groups are direct products, diagrammatic presentation of functors and their compositions as sequences of dots on a line can be refined to presentations via suitable graphs that come with a projection on a line. Suppose we are given an inclusion of groups $H_1\times H_2\subset G$. 
Denote the induction functor  $\indf_{H_1\times H_2}^G$ from $H_1\times H_2$-modules to $G$-modules by a vertex with $H_1,H_2$ lines flowing in and $G$ line flowing out, see Figure~\ref{fig3pic} left. 
The restriction functor is depicted by having a $G$-line split into $H_1$ and $H_2$ lines, see Figure~\ref{fig3pic} middle. 
One can then build diagrams for compositions of these functors, see Figure~\ref{fig3pic} right, for instance. These graphs come with projections onto $\RR^1$, to keep track of the order of composition of functors. 

\vspace{0.1in}

\begin{figure}
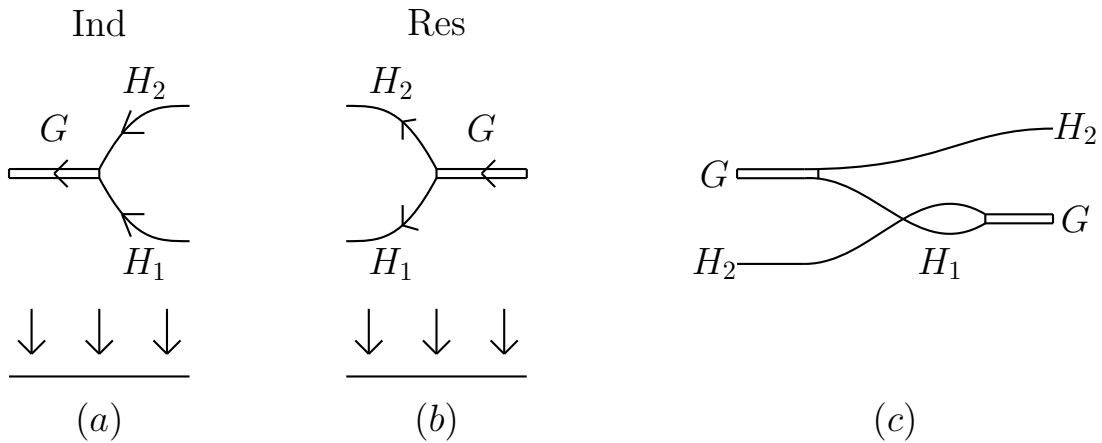

\begin{center}

\end{center}
    \caption{Diagrams of the induction (a) and restriction (b) functors. Diagram (c) is a composition of one restriction, one permutation, and one induction functor, going from the category of $G\times H_2$-modules to that of $H_2\times G$-modules. The composition is depicted and read from right to left.}
    \label{fig3pic}
\end{figure}

Natural transformations between these compositions can be naturally depicted by foams that extend between such diagrams. First off, identity natural transformation from the induction functor to itself (respectively, from the  restriction functor to itself) is depicted by the direct product foam, the graph depicting this functor times the unit interval $[0,1]$, see Figure~\ref{fig1_1}. 

\vspace{0.1in} 

\begin{figure}[ht]
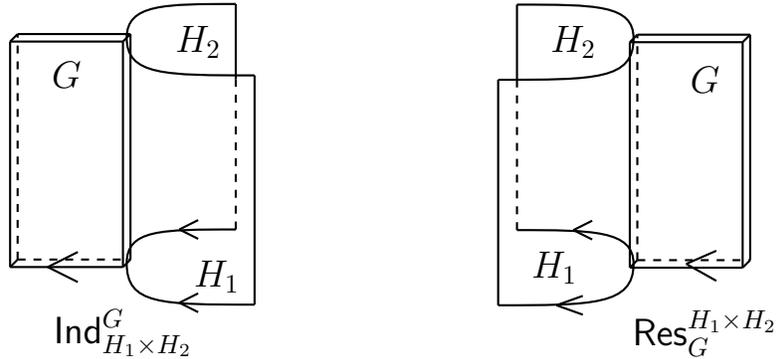

\begin{center}

\end{center}
 
\caption{Identity natural transformations on induction and restriction functors $\indf_{H_1\times H_2}^G$ and 
 $\resf_{G}^{H_1\times  H_2}$, respectively.}
\label{fig1_1}
 
\end{figure}

Singular lines in these foams are referred to as \emph{seam}  lines.
A natural transformation $a$ from the induction functor to itself may be denoted by a dot on a seam line, labeled $a$, see Figure~\ref{fig1_7} left, and likewise for an endomorphism of the restriction functor. A central element $c\in Z(\kk G)$ in the center $Z(\kk G)$ of the group algebra $\kk G$ is denoted by a dot floating in a facet $G$ labeled $c$, see Figure~\ref{fig1_7} right. 

\vspace{0.1in} 

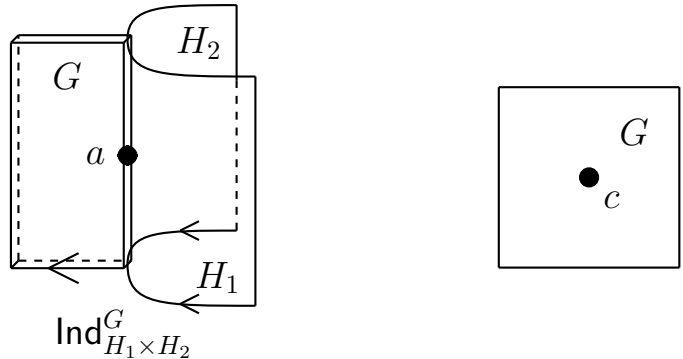
\begin{figure}
\begin{center}
    
\begin{tikzpicture}[scale=0.6,decoration={
    markings,
    mark=at position 0.60 with {\arrow{>}}}]

\begin{scope}[shift={(0,0)}]


\draw[thick] (-2,1) -- (2,1);
\draw[thick] (2,1) -- (2,6);
\draw[thick] (-2,1) -- (-2,6);
\draw[thick] (-2,6) -- (2,6);
 
\draw[thick,dashed] (-1.8,1.2) -- (2.2,1.2); 

\draw[thick,dashed] (-1.8,1.2) -- (-1.8,6.2);

\draw[thick] (2.2,1.2) -- (2.2,6.2);

\draw[thick] (-1.8,6.2) -- (2.2,6.2);

\draw[thick,postaction={decorate}] (6.5,0) .. controls (3,0) and (2.1,0) .. (2.1,1.1);

\draw[thick,postaction={decorate},dashed] (6,2) .. controls (3,2) and (2.1,2) .. (2.1,1.1);

\draw[thick] (6.5,0) -- (6.5,5.1);

\node at (-0.5,4.75) {\Large $G$};

\node at (4.5,1) {\Large $H_1$};

\node at (4.5,6) {\Large $H_2$};

\draw[thick,postaction={decorate}] (6.5,5.1) .. controls (3.1,5.1) and (2.1,5.1) .. (2.1,6.1);

\draw[thick,postaction={decorate}] (6,7) .. controls (3.1,7) and (2.1,7) .. (2.1,6.1);

\draw[thick,dashed] (6,2) -- (6,5);

\draw[thick] (6,5) -- (6,7);

\draw[thick] (-2,1) -- (-1.8,1.2);

\draw[thick] (2,1) -- (2.2,1.2);

\draw[thick] (-2,6) -- (-1.8,6.2);

\draw[thick] (2,6) -- (2.2,6.2);

\node at (1.5,-0.8) {\Large $\Ind_{H_1\times H_2}^G$};

\draw[thick] (0,1) -- (.8,1.4);

\draw[thick] (0,1) -- (.8,.6);

\draw[thick,fill] (2.35,3.5) arc (0:360:0.25);

\node at (1.25,3.5) {\Large $a$};

\end{scope}
 
\begin{scope}[shift={(13,1.5)}]
\draw[thick] (-0.5,-0.5) -- (-0.5,4.5);
\draw[thick] (-0.5,4.5) -- (4.5,4.5);
\draw[thick] (-0.5,-0.5) -- (4.5,-0.5);
\draw[thick] (4.5,-0.5) -- (4.5,4.5);

\draw[thick,fill] (2.35,2) arc (0:360:0.25);

\node at (2.65,1.40) {\Large $c$};

\node at (3.5,3.5) {\Large $G$};

\node at (2,-2) {};
\end{scope}

\end{tikzpicture}
\end{center}
    \caption{Left: natural transformation $a$, an endomorphism of the induction functor. Right: central element $c$ of $\kk G$.}
    \label{fig1_7}
\end{figure}

The functor $V\otimes -$ of the tensor product with a representation $V$ of $G$ is denoted by a dot on a line, with label $V$ and the regions to the sides of the dot labeled $G$, see Figure~\ref{fig1_8} left. 
Identity natural transformation $V\otimes - \Rightarrow V\otimes - $
is depicted by a vertical line (\emph{defect or seam line}) labeled $V$, see Figure~\ref{fig1_8} middle. 
A homomorphism $\gamma:V_1\lra V_2$ of $G$-modules induces a natural transformation $V_1\otimes - \lra V_2\otimes -$ of the functors $V_1\otimes -$ and $V_2\otimes -$, which we also denote by $\gamma$; it is depicted by a dot on a \emph{defect line} for $V$, see Figure~\ref{fig1_8} right, with the defect line label changing from $V_1$ to $V_2$.

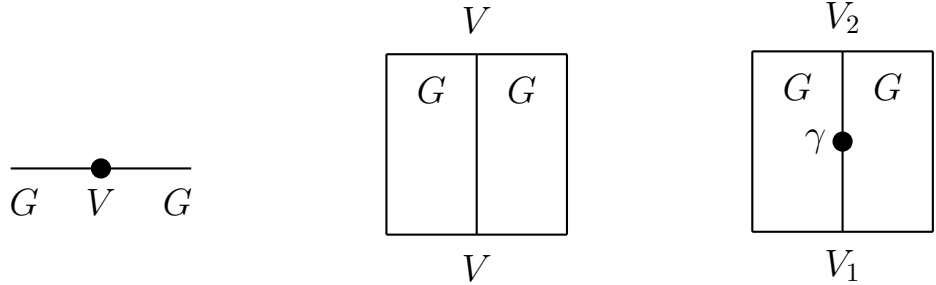
\begin{figure}
\begin{center}
    \begin{tikzpicture}[scale=0.6]

\begin{scope}[shift={(0,0)}]
    \draw[thick] (-1,0) -- (5,0);
    \draw[thick,fill] (2.25,0) arc (0:360:0.25);  
    \node at (-0.6,-0.75) {\Large $G$};
    \node at (2,-1) {\Large $V$};
    \node at (4.6,-0.75) {\Large $G$};
    \node at (0,-2.5) {};
    \end{scope}

\begin{scope}[shift={(9.25,0)}]
    \draw[thick] (-0.5,0) rectangle (5.5,4);

    \draw[thick] (2.5,0) -- (2.5,4);

    \node at (2.5,-0.75) {\Large $V$};
    \node at (2.5, 4.75) {\Large $V$};

    \node at (1,2.5) {\Large $G$};
    \node at (4,2.5) {\Large $G$};
\end{scope}

\begin{scope}[shift={(18.5,0)}]

    \draw[thick] (-0.5,0) rectangle (5.5,4);
 
    \draw[thick] (2.5,0) -- (2.5,4);
    \draw[thick,fill] (2.75,2) arc (0:360:0.25);
    \node at (1.75,2) {\Large $\gamma$};

    \node at (2.5,-0.75) {\Large $V_1$};
    \node at (2.5, 4.75) {\Large $V_2$};
    
    \node at (0.5,2.75) {\Large $G$};
    \node at (4.5,2.75) {\Large $G$};
\end{scope}

    \end{tikzpicture}
\end{center}
    \caption{Left: notation for the functor $V\otimes -$ of the tensor product with $V$. Middle: identity natural transformation on $V\otimes -$. Right: natural transformation $\gamma$: $V_1\otimes - \lra V_2\otimes -$.}
    \label{fig1_8}
\end{figure}

%
%

\section{Foams for the iterated wreath products of \texorpdfstring{$S_2$'s}{S2}}  
\label{foams}

\subsection{Iterated wreath products of \texorpdfstring{$S_2$'s}{S2s}}
\label{subsection:iterated-wreath-products}
For some background on the wreath product, see \cite{CST, OOR, IW1, IW2, IO}. Denote by $G_n$ the $n$-th iterated wreath product of the symmetric group $S_2$. It can be defined as the group of symmetries of the full binary tree $T_n$ of depth $n$. This binary tree has a root, $2^n$ leaf vertices, and all paths from the root to leaf vertices have length $n$. The tree $T_n$ has $2^{n+1}-1$ vertices. The leaf vertices can be naturally labeled from $1$ to $2^n$ inductively on $n$ so that the vertices of the left branch are labeled by $1$ through $2^{n-1}$ and those of the right branch are labeled by $2^{n-1}+1$ through $2^n$. See Figure~\ref{figT} for the case when $n=4$.

\vspace{0.1in}

\begin{figure}
\begin{center}

\end{center}

\caption{Tree $T_4$.}
\label{figT}
\end{figure} 

For small values of $n$, the group $G_n$ has the following form: 
\begin{itemize}
    \item $G_0=\{1\}$ is the trivial group, 
    \item $G_1=S_2$ is the symmetric group of order two,
    \item $G_2 = S_2\wr S_2 = (S_2\times S_2)\rtimes S_2$  has order $8$ and is isomorphic to the dihedral group $D_4$. 
\end{itemize}
Note that group $G_n$ has order $2^{2^n-1}$. 

\vspace{0.1in} 

The group $G_n$ has an index two subgroup naturally isomorphic to $G_{n-1}\times G_{n-1}$, which we also denote by 
\begin{equation}
    G^{(1)}_{n-1}\ := \ G_{n-1}\times G_{n-1} \xhookrightarrow{\iota_{n-1}} G_n. 
\end{equation}
The embedding consists of symmetries that fix the two branches of the tree, one to the left and the other to the right, of the root. The inclusion of this subgroup is denoted by $\iota_{n-1}$.  There is a coset decomposition 
\begin{equation}\label{eq_coset} 
G_n  = G^{(1)}_{n-1} \sqcup  G_{n-1}^{(1)}\, \beta_{n} = G^{(1)}_{n-1} \sqcup  \beta_{n} \, G^{(1)}_{n-1} , 
\end{equation} 
where $\beta_{n}$ is the involution that transposes the left and right branches of $T_n$. Notice the coincidence of left and right cosets 
\[ (G_{n-1}\times G_{n-1})\, \beta_{n} = \beta_{n}\, (G_{n-1}\times G_{n-1}),
\] 
which holds for cosets of any index two subgroup. In particular, the left and right cosets are also double cosets. 
Furthermore, for $g_1,g_2\in G_{n-1}$, 
\[ (g_1, g_2) \, \beta_{n} = \beta_{n} \, (g_2, g_1), 
\]
that is, moving through $\beta_{n}$ switches the order of the two terms in the product $G_{n-1}\times G_{n-1}$. Denote by $\tau$ the transposition involution of $G_{n-1}^{(1)}=G_{n-1}\times G_{n-1}$, 
\begin{equation} \label{eq_tau} 
    \tau (g_1, g_2) \ := \ (g_2, g_1), \ \ g_1,g_2\in G_{n-1}. 
\end{equation}
Then 
\begin{equation}
    \tau(g) = \beta_n\, g \, \beta_n, \ \ g\in G_{n-1}^{(1)}. 
\end{equation}

\vspace{0.1in} 

By induction on $n$, we can canonically identify $G_n$ with a subgroup of the symmetric group $S_{2^n}$. When $n=0$, both $G_0$ and $S_{2^0}=S_1$ are the trivial group. For the induction step, given an inclusion $j_{n-1}: G_{n-1}\hookrightarrow S_{2^{n-1}}$, we  realize $G_n\subset S_{2^n}$ as the subgroup generated by:
\begin{itemize}
    \item permutations of $\{1,\dots, 2^{n-1}\}$ in $G_{n-1}$,
    \item permutations of $\{2^{n-1}+1,\dots, 2^n\}$ in $G_{n-1}$ (obtained by shifting all indices by $2^{n-1}$),
    \item permutation $\beta_n= (1,2^{n-1}+1)(2,2^{n-1}+2)\cdots (2^{n-1},2^n).$
\end{itemize}

Here we inductively identify $\beta_n\in G_n$ with its image in $S_{2^n}$. The subgroup  
$G_{n-1}^{(1)}$
is given by products of permutations of the first and the second type on the above list. As we have already mentioned, it is a normal subgroup of index $2$, with $\{1,\beta_n\}$ a set of coset representatives.

\subsection{A description of the center of \texorpdfstring{$G_n$}{Gn}}
\label{subsection:center-Gn}
The center of $G_n$ is an order two subgroup, 
\begin{equation}
    Z(G_n) = \{1,c_n\}, 
    \qquad  
    c_n:=(1,2)(3,4)\cdots (2^n-1,2^n).
\end{equation}

\vspace{0.1in}

Define $G_{n-k}^{(k)}$ as the subgroup $(G_{n-k})^{\times 2^{k}}\subset G_n$ given by permutations that fix all 
nodes of the full binary tree at distance at most $k-1$ from the root. There is a chain of inclusions 
\begin{equation}
    \{1\}=G_{0}^{(n)} \subset G_1^{(n-1)} \subset \ldots \subset G_{n-2}^{(2)}\subset G_{n-1}^{(1)}\subset G_n^{(0)}=G_n. 
\end{equation}
Each inclusion 
\begin{equation}
     G_{n-k-1}^{(k+1)} \subset G_{n-k}^{(k)} 
\end{equation}
is that of an index $2^{2^k}$ normal subgroup, with the quotient isomorphic $S_2^{\times 2^k}$.

\subsection{Induction and restriction bimodules}
\label{subsection:induction-restriction-bimodules}
We denote $\kk G_n$, viewed as a bimodule over itself, by $(n)$. Denote $\kk G_{n-1}^{(1)} := \kk (G_{n-1}\times G_{n-1})$ by $(n-1,n-1)$ and even by $(n-1)^{(1)}$, to further compactify the notation, and extend these notations to tensor products of bimodules. For instance
\[  (n)_{(n-1)^{(1)}}(n)\ := \ \kk G_n \otimes_{\kk G_{n-1}^{(1)}} \kk G_n,
\] 
is naturally a $\kk G_n$-bimodule. 

\vspace{0.1in} 

Using notations from~\cite[Section 3.2]{Kh1}, we write down the biadjointness maps: 
\begin{enumerate}
    \item\label{item:biadjoint1} $\alpha_{n-1}^n: (n)_{(n-1)^{(1)}} (n)  \lra (n)$, where $x\otimes y \mapsto xy, \ x,y\in (n) = \kk G_n$,   
    \item\label{item:biadjoint2} $\gamma_{n-1}^n: (n-1)^{(1)} \lra {}_{(n-1)^{(1)}} (n)_n(n)_{(n-1)^{(1)}}$, where $x\mapsto x\otimes 1 = 1\otimes x$, $x\in (n-1)^{(1)}$,
    \item\label{item:biadjoint3}  $\alpha^{n-1}_n: {}_{(n-1)^{(1)}} (n)_n(n)_{(n-1)^{(1)}}\cong {}_{(n-1)^{(1)}} (n)_{(n-1)^{(1)}}\lra (n-1)^{(1)} $ takes $g\in (n)$ to $p_{n-1}(g)\in (n-1)^{(1)}$ by 
    $p_{n-1}(g) = 
    \begin{cases}
    g &  \mathrm{if} \  g\in (n-1)^{(1)}, \\
    0 & \mathrm{otherwise}. 
    \end{cases}$
    \item\label{item:biadjoint4}  $\gamma^{n-1}_n: (n) \lra (n)_{(n-1)^{(1)}}(n)$, where $x\mapsto  1 \otimes x + \beta_{n}\otimes \beta_{n}x$, and $x\in (n)$.  
\end{enumerate}
These four bimodule maps (or morphisms of functors) are represented by the four diagrams in Figure~\ref{fig1_3}.


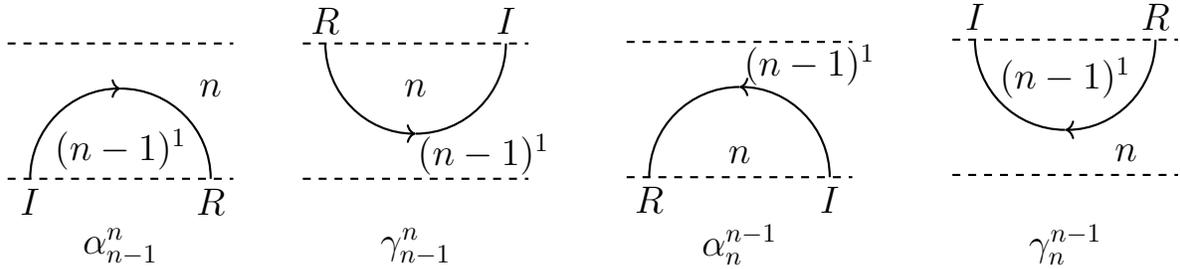
\begin{figure}
\begin{center}
\begin{tikzpicture}[scale=0.6]

\begin{scope}[shift={(0,0)}]
\draw[thick, dashed] (-2.5,0) -- (2.5,0);
\draw[thick, dashed] (-2.5,3) -- (2.5,3);
\draw[thick,<-] (0,2) arc (90:180:2);
\draw[thick] (2,0) arc (0:90:2);
\node at (-2,-.5) { $I$};
\node at (2,-.5) { $R$};
\node at (0,-1.5) { $\alpha_{n-1}^n$};
\node at (2,2) { $n$};
\node at (0,.65) { $(n-1)^1$};
\end{scope}

\begin{scope}[shift={(7,0)}]
\draw[thick, dashed] (-2.5,0) -- (2.5,0);
\draw[thick, dashed] (-2.5,3) -- (2.5,3);
\draw[thick,->] (-2,3) arc (180:270:2);
\draw[thick] (0,1) arc (270:360:2);
\node at (-2,3.5) {$R$};
\node at (2,3.5) {$I$};
\node at (0,-1.5) { $\gamma_{n-1}^n$};
\node at (0,2) { $n$};
\node at (1.5,.55) { $(n-1)^1$};
\end{scope}

\begin{scope}[shift={(14,0)}]
\draw[thick, dashed] (-2.5,0) -- (2.5,0);
\draw[thick, dashed] (-2.5,3) -- (2.5,3);
\draw[thick] (0,2) arc (90:180:2);
\draw[thick,->] (2,0) arc (0:90:2);
\node at (-2,-.5) {$R$};
\node at (2,-.5) {$I$};
\node at (0,-1.5) {$\alpha_{n}^{n-1}$};
\node at (0,.5) {$n$};
\node at (1.55,2.48) {$(n-1)^1$};
\end{scope}

\begin{scope}[shift={(21,0)}]
\draw[thick, dashed] (-2.5,0) -- (2.5,0);
\draw[thick, dashed] (-2.5,3) -- (2.5,3);
\draw[thick] (-2,3) arc (180:270:2);
\draw[thick,<-] (0,1) arc (270:360:2);
\node at (-2,3.5) {$I$};
\node at (2,3.5) {$R$};
\node at (0,-1.55) {$\gamma_{n}^{n-1}$};
\node at (1.35,.5) {$n$};
\node at (0,2.15) {$(n-1)^1$};
\end{scope}
\end{tikzpicture}
\end{center}

\caption{Diagrams for natural biadjointness transformations. Letters $R$ and $I$ stand for \emph{restriction} and \emph{induction} functors, respectively.} 
\label{fig1_3}
\end{figure}

\begin{prop} These four natural transformations turn functors $I_{n-1}^n$ and $R_n^{n-1}$ into a cyclic biadjoint pair. 
\end{prop}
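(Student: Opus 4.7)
The plan is to verify the four snake (zigzag) identities corresponding to the isotopies in Figure~\ref{fig_isotopy}; each identity asserts that a composite of two of the bimodule maps (1)--(4) is the identity natural transformation of $I_{n-1}^n$ or $R_n^{n-1}$. A cyclic biadjoint pair is precisely a pair of functors equipped with a system of units and counits satisfying these four relations, so that oriented planar diagrams built from the four cups and caps of Figure~\ref{fig1_3} are isotopy invariant.

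The two snakes for the adjunction $I_{n-1}^n \dashv R_n^{n-1}$, with counit $\alpha_{n-1}^n$ and unit $\gamma_{n-1}^n$, reduce to the classical Frobenius reciprocity for a finite-index inclusion and will be immediate from the definitions: for $p \in I = \kk G_n$, the composite $(\alpha_{n-1}^n \otimes \id_I) \circ (\id_I \otimes \gamma_{n-1}^n)$ sends $p \mapsto p \otimes 1 \otimes 1$ in $I \otimes_{(n-1)^{(1)}} R \otimes_n I$ and then multiplies to give $p \cdot 1 = p$; the mirror snake with $R$ in place of $I$ is analogous.

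The remaining two snakes, for the adjunction $R_n^{n-1} \dashv I_{n-1}^n$, are the more delicate step. They will require the coset decomposition $G_n = G_{n-1}^{(1)} \sqcup G_{n-1}^{(1)} \beta_n$ from~\eqref{eq_coset}, together with the relations $\beta_n^2 = 1$ and $\beta_n (g_1,g_2) \beta_n = (g_2,g_1)$ of~\eqref{eq_tau}. For instance, to verify $(\alpha_n^{n-1} \otimes \id_R) \circ (\id_R \otimes \gamma_n^{n-1}) = \id_R$, one starts with $q \in R$, applies $\gamma_n^{n-1}$ to obtain $q \otimes 1 + q\beta_n \otimes \beta_n$ in $\kk G_n \otimes_{\kk G_{n-1}^{(1)}} \kk G_n$, and then applies $p_{n-1}$ to the left tensor factor followed by multiplication, producing $p_{n-1}(q) + p_{n-1}(q\beta_n)\beta_n$. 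A case split on whether $q \in G_{n-1}^{(1)}$ or $q \in G_{n-1}^{(1)} \beta_n$, using $\beta_n^2 = 1$, recovers $q$ in either case; the fourth snake is checked by a symmetric computation with the roles of $I$ and $R$ swapped. The only real obstacle is the bookkeeping of left and right bimodule tensor factors and keeping the coset representative $\beta_n$ on the correct side---there is no conceptual difficulty, as the proposition is a special case of the well-known biadjointness of induction and restriction for a finite-index subgroup.
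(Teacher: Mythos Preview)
Your proposal is correct and in fact more explicit than the paper, which does not give a proof at all but simply refers to \cite[Section~3.2]{Kh1} for the general case of a finite-index subgroup $H\subset G$. Your direct verification of the four snake identities, using the coset decomposition \eqref{eq_coset} and $\beta_n^2=1$ for the two nontrivial zigzags, is exactly the computation that reference carries out in general; the paper treats the proposition as an immediate instance of that standard result rather than reproving it.
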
 


We refer the reader to~\cite[Section 3.2]{Kh1} for details, in the general case of a finite index subgroup. In particular, planar isotopy relations between compositions of these cups and caps hold, see Figure~\ref{fig_isotopy}, where the general case of $H\subset G$ of finite index is shown. 

\vspace{0.1in} 

For our specific case, we have  
obvious relations in Figure~\ref{fig1_4}. 

\vspace{0.1in}

\begin{figure}
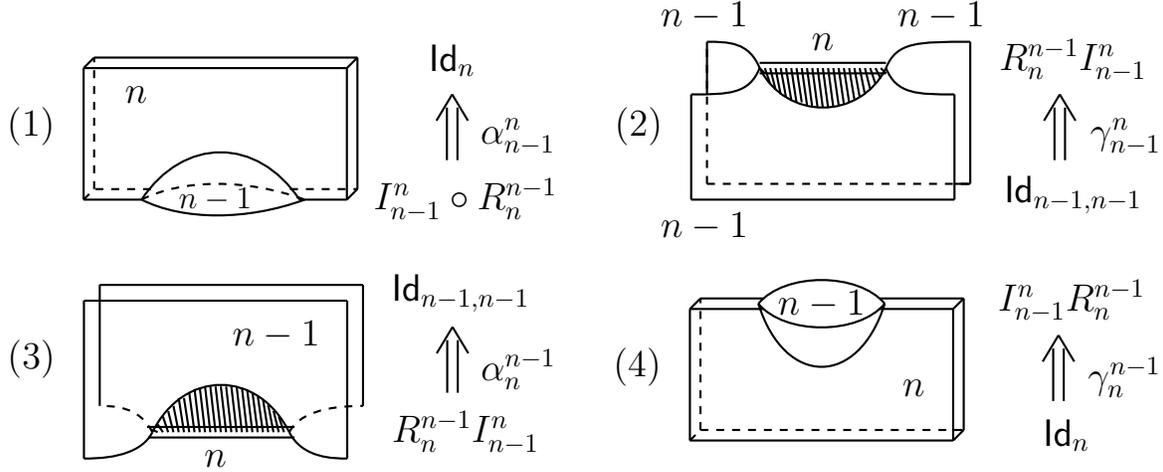

\begin{center}

\end{center}

\caption{The four biadjointness transformations for $I_{n-1}^n,R_n^{n-1}$. } 
\label{fig1_6}
\end{figure} 

Biadjointness relations translate into the isotopy properties of foam glued from these four foams. One out of four possible isotopy relations is shown in Figure~\ref{fig_isotopy}. 


\subsection{Mackey induction-restriction formula and decomposition of \texorpdfstring{$\Ind$-$\Res$}{Ind-Res} functor}
\label{subsection:Mackey-induction-restriction-formula}

To the transposition automorphism $\tau$ of $G_{n-1,n-1}$ taking $g_1\times g_2$ to $g_2\times g_1$, we associate $\kk G_{n-1,n-1}$-bimodule $B_{12}$ given by $\kk G_{n-1,n-1}$ with 
 the left action twisted by $\tau$. Denote by 
$T_{12}$ the invertible endofunctor of $\kk G_{n-1,n-1}\dmod$ given by tensoring with $B_{12}.$  

\begin{prop}
\label{prop_canonical_decomp_functors}
There is a canonical decomposition of functors
\begin{equation}\label{eq_func_isom}
    R^{n-1}_n\circ I^n_{n-1} \ \cong \ \Id  \oplus T_{12}.
\end{equation}
\end{prop} 
Diagrams for the three functors in this isomorphism are shown in Figure~\ref{fig2_1}. In Figure~\ref{fig2_2}, we describe the direct sum decomposition via foams. 

\proof
The composition $R^{n-1}_n\circ I^n_{n-1}$ is given by tensoring with the $G_{n-1,n-1}$-bimodule $\kk G_n$. The proposition follows from the Mackey induction-restriction formula. Namely, $G_n$ decomposes as the disjoint union of two $(G_{n-1,n-1},G_{n-1,n-1})$-cosets. One of them is $G_{n-1}$, giving the identity functor as a direct summand of $R^{n-1}_n\circ I^n_{n-1}$. The other is $G_{n-1}\tau G_{n-1}$. The latter coset is represented by the transposition of the two copies of $G_{n-1}$ in $G_{n-1}\tau G_{n-1}$, corresponding to the bimodule $B_{12}$ above and the functor $T_{12}$. 
\endproof

\begin{figure}
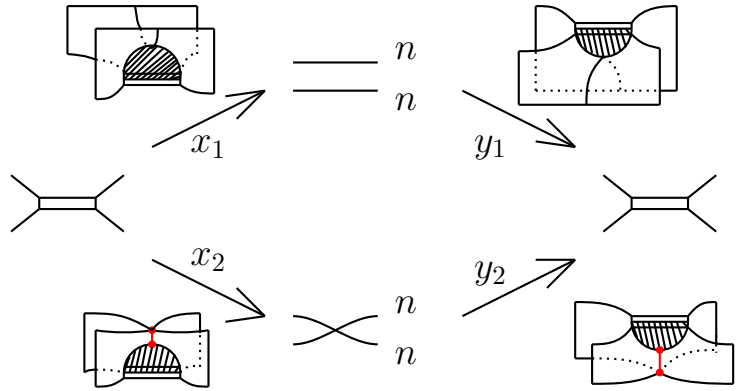

\begin{center}

\end{center}

\caption{Maps (foams) describing the direct sum decomposition in \eqref{eq_func_isom}. 
}
\label{fig2_2}
\end{figure}

\vspace{0.1in} 

The  direct sum decomposition property translates into the following relations: 
\begin{align*}
&  y_1 x_1 + y_2 x_2 =\id_{RI}, \\
x_1 y_1 &=\id, \qquad  \qquad  x_2 y_2 = \id, \\
x_1 y_2 &= 0, \ \qquad \qquad x_2 y_1 = 0.
\end{align*}
Foam equivalents of these relations are shown in Figure~\ref{fig2_3}. 

\vspace{0.1in}

\begin{figure}
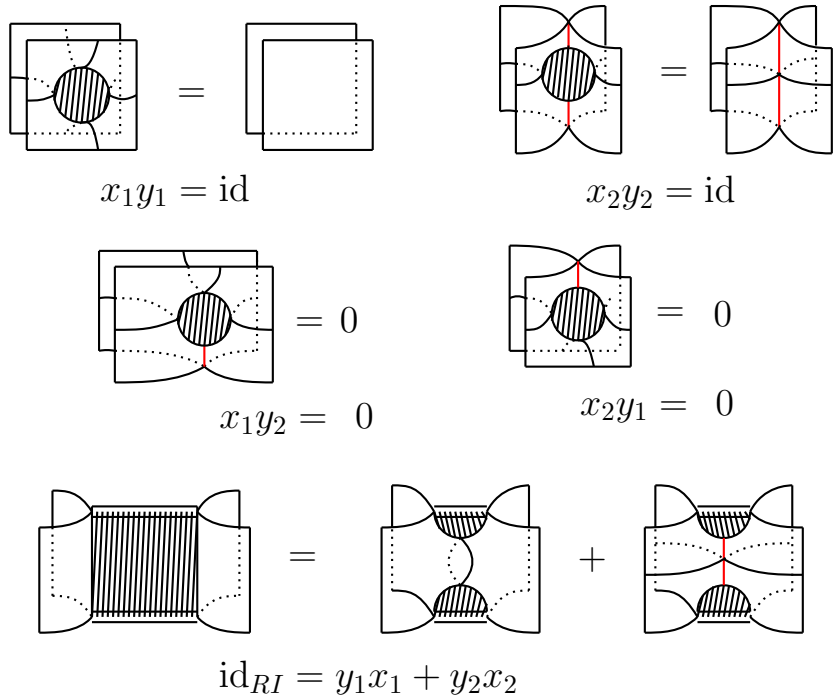

\begin{center}

\end{center}

\caption{Direct sum decomposition relations.}
\label{fig2_3}
\end{figure}

Here and in the rest of the paper the reader should keep in mind that our foams are pictures, in a sense, but guided by topological interpretations. 
 
When depicted in $\R^3$, the foams for the maps $x_2,y_2$ are immersed, and contain ``overlap" or ``intersection" lines or seams. Using the biadjointness of the induction and restriction functors, these immersed foams can be converted into foams in Figure~\ref{fig2_4}, depicting mutually-inverse natural transformations, denoted $\ell(\beta_n)$ and $\ell(\beta_n)'$, respectively, between functors $R_n^{n-1}$ and $T_{12}R_n^{n-1}$. 

\vspace{0.1in}

\begin{figure}
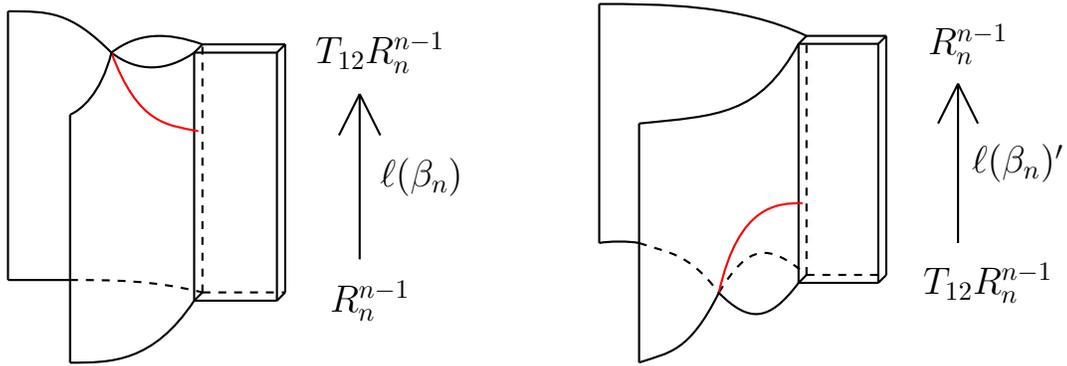

\begin{center}

\end{center}

\caption{Intersection seams giving mutually-inverse functor isomorphisms $ T_{12}R_n^{n-1}\cong R_n^{n-1}$.} 
\label{fig2_4}
\end{figure}

Reflecting these diagrams about the $yz$-plane gives dual (biadjoint) mutually-inverse natural transformations between the functors $I_{n-1}^n$ and $I_{n-1}^nT_{12}$. Figures~\ref{fig2_5} and~\ref{fig2_5_1} depict relations  that these two maps are mutually-inverse isomorphisms. 

\vspace{0.1in}

\begin{figure}
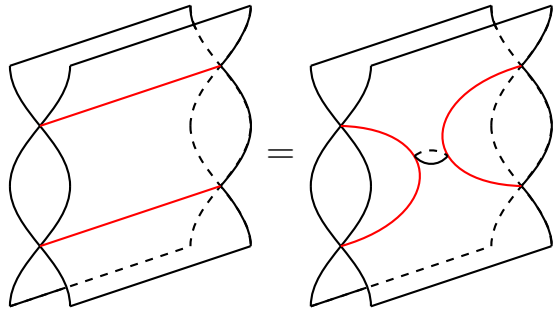

\begin{center}

\end{center}

    \caption{An isotopy of immersed surfaces in $\mathbb{R}^3$. Intersection lines are shown in red.}
    \label{fig4_7}
\end{figure}

Together, these relations allow to reduce the number of immersion points along an $(n,n-1)$-seam to one or none. If such a seam closes into a disk which carries no additional decorations, it can then be reduced  to either two parallel planes (top left relation in Figure~\ref{fig2_3}), if the original number of immersion endpoints along a seamed circle is even, or to $0$ (either one of the middle row relations in Figure~\ref{fig2_3}), if the number of immersion endpoints along a seamed circle is odd. 

\vspace{0.1in}

Functor $T_{12}$ is just the permutation functor, induced by the transposition of two copies of the group $G_{n-1}$ in the direct product, and satisfies the relations
\[ T_{12}T_{12} = \id, \qquad \qquad   T_{12}T_{23}T_{12}=T_{23}T_{12}T_{23}.
\] 
The corresponding relations on immersed foams are given in Figure~\ref{fig7_1}. The last relation is induced by a foam with three facets and a triple intersection point of these facets, where three intersection seams meet. Also see \cite[Figure 12]{CS98}.

\vspace{0.1in}

\begin{figure}
\begin{center}

\end{center}
    \caption{Reidemeister moves on immersed foams.}
    \label{fig7_1}
\end{figure}


For a seam $C$ that is closed into a circle and bounds a disk $D$, as in the two top rows of Figure~\ref{fig2_3}, consider the number $m$ of immersion points on it (points where a red segment ends). Using the above relations preserving the parity of $m$ we can reduce the foam to have at most one immersion point along $C$. From Figure~\ref{fig2_3} relations we then see that a diagram evaluates to $0$ if $m$ is odd. If $m$ is even, diagram can be simplified to one without $C$ and disk $D$, and immersion endpoints along $C$ matched in pairs. 

\vspace{0.1in}

There are also obvious isotopy relations, some of which can be obtained from Figure~\ref{fig_isotopy} by substituting a direct product $H_1\times H_2$ for $H$ and converting diagrams into foams, see also \cite{Kh3}. 

\subsection{Central elements and bubbles}
\label{subsection:central-elements-bubbles}
Recall that the center $Z(G_n)\cong S_2$ is the symmetric group of order two, with the nontrivial element $c_n = (1,2)(3,4)\cdots (2^n-1,2^n)$. Via the inclusion $\iota_{n-1}$ this element can be defined inductively as $c_n = \iota_{n-1}(c_{n-1}\times c_{n-1})$. We denote $c_n$ by a dot on a facet labeled $n$, see Figure~\ref{fig2_9} left. 
Square of the dot is the identity, see Figure~\ref{fig2_9} right.
Element $c_n$ can also be thought of as an endomorphism of the identity functor on $\kk G_n\dmod$.

\vspace{0.1in} 

The center $Z(\kk G_n)$ is a commutative algebra with a basis parametrized by conjugacy classes of $G_n$. Iterating the bubbles and dots construction allows us to construct various elements of the center. 

\vspace{0.1in}

\begin{figure}
\begin{center}

\end{center}

\caption{The simplest relations on $c$-bubbles. Top row: the value of the empty bubble is $2$ using the composed map $x\mapsto x+\beta_n^2 x = 2x$, where $x\in \kk G_n$, in Section~\ref{subsection:central-elements-bubbles} (here, we do not multiply by $c_{n-1}^{(1)}$ since there are no defects and $\beta_n$ is the involution $\beta_n^2 =1$). Second row, left: a dot can be placed anywhere on the bubble. 
Second row, right: we use the fact that $c_n^2 = 1$; also see Figure~\ref{fig2_9}.}
\label{fig2_10}
 
\end{figure}

To compute the corresponding central elements, we factor these foams into a composition of elementary foams and compute the corresponding natural transformations. For instance, bubble with a single dot is a composition of three elementary foams, see Figure~\ref{fig2_12}. 

\vspace{0.1in}

\begin{figure}
\begin{center}
\begin{tikzpicture}[scale=.65]

\draw[thick] (0,-1.1) -- (0,4.9);
\draw[thick,dashed] (0.2,-.9) -- (6.2,-.9);
\draw[thick] (0,-1.1) -- (6,-1.1);
\draw[thick] (0,4.9) -- (.2,5.1);

\draw[thick] (0,4.9) -- (6,4.9);
\draw[thick] (0.2,5.1) -- (6.2,5.1);
\draw[thick,dashed] (0,-1.1) -- (.2,-.9);
\draw[thick] (6,-1.1) -- (6,4.9);
\draw[thick] (6,-1.1) -- (6.2,-.9);
\draw[thick] (6.2,-0.9) -- (6.2,5.1);
\draw[thick] (6,4.9) -- (6.2,5.1);
\draw[thick,dashed] (0.2,-.9) -- (0.2,0.9);
\draw[thick,dashed] (0.2,1.1) -- (0.2,2.9);
\draw[thick,dashed] (0.2,3.1) -- (0.2,5.1);
\draw[line width=.7mm] (2,1) -- (2,3);

\draw[thick,dashed] (0,0.9) -- (0.2,1.1);
\draw[thick,dashed] (0,2.9) -- (0.2,3.1);
\draw[thick,dashed] (0,0.9) -- (2,0.9); 
\draw[thick,dashed] (0.2,1.1) -- (2,1.1); 
\draw[thick,dashed] (0,2.9) -- (2,2.9); 
\draw[thick,dashed] (0.2,3.1) -- (2,3.1);
\draw[thick,dashed] (4,0.9) -- (6,0.9); 
\draw[thick,dashed] (4,1.1) -- (6.2,1.1); 
\draw[thick,dashed] (4,2.9) -- (6,2.9);
\draw[thick,dashed] (4,3.1) -- (6.2,3.1);
\draw[thick,dashed] (6,0.9) -- (6.2,1.1);
\draw[thick,dashed] (6,2.9) -- (6.2,3.1);

\draw[thick,fill] (3,2) arc (0:360:0.25);
\draw[thick] (2,3) .. controls (2.5,2.35) and (3.5,2.35) .. (4,3);
\draw[thick,dashed] (2,3) .. controls (2.5,3.6) and (3.5,3.6) .. (4,3);
\draw[line width=.7mm] (2,3) .. controls (2.5,4.15) and (3.5,4.15) .. (4,3);
\draw[line width=.7mm] (4,.95) -- (4,3);
\draw[thick,dashed] (2,1) .. controls (2.5,1.6) and (3.5,1.6) .. (4,1); 
\draw[thick] (2,1) .. controls (2.5,0.35) and (3.5,0.35) .. (4,1); 
\draw[line width=.7mm] (2,1) .. controls (2.5,-.13) and (3.5,-.13) .. (4,1); 
\node at (5,4) {\Large $n$};
\draw[thick] (4,3) -- (4,2.9);
\end{tikzpicture}
\end{center}

\caption{An example of the composition $\kk G_n \rightarrow \kk G_n \otimes \kk G_{n-1}^{(1)}\otimes \kk G_n \rightarrow \kk G_n$. } 
\label{fig2_12}
\end{figure}
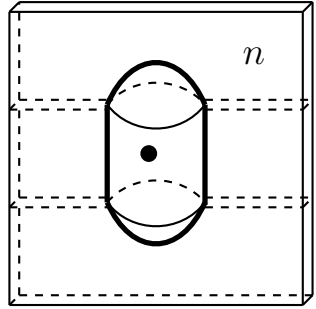

These three foams are local singular maximum and minimum, and adding a dot to a facet. 
The corresponding bimodule map is  composition 
\begin{equation*}
    \kk G_n \stackrel{\gamma_n^{n-1}}{\rightarrow} \kk G_n \otimes_{n-1} \kk G_n \stackrel{c_{n-1}\times 1}{\longrightarrow} \kk G_n \otimes_{n-1} \kk G_n \stackrel{\alpha_{n-1}^n}{\rightarrow} \kk G_n,
\end{equation*}
 where $\otimes_{n-1}$ denotes the tensor product over the subalgebra $\kk G_{n-1}^{(1)}$ and $\gamma_n^{n-1}$ and $\alpha^n_{n-1}$ are given by formulas \eqref{item:biadjoint1} and \eqref{item:biadjoint4}, see also Figure~\ref{fig1_6}. 
 
 \vspace{0.1in} 
 
 For $x\in \kk G_n$ we compute the composition 
\begin{equation*}
x\mapsto 1\otimes x+ \beta_n\otimes \beta_n x\mapsto 1\otimes  c_{n-1}^{(1)}x + \beta_n\otimes c_{n-1}^{(1)}\beta_n x\mapsto 
 c_{n-1}^{(1)}x + \beta_n  c_{n-1}^{(1)}\beta_n x.
 \end{equation*}
This endomorphism of the identity functor is the multiplication by the central element 
\begin{equation*}
c_{n-1}^{(1)} + \beta_n  c_{n-1}^{(1)}\beta_n=c_{n-1}^{(1)}+c_{n-1}^{(2)}=c_{n-1}\times 1 + 1\times c_{n-1}
\end{equation*} 
(where we skipped the inclusion map $\iota_n$), 
also implying the first relation in Figure~\ref{fig2_10}. 
Another easy computation gives the second  
relation in Figure~\ref{fig2_10}.

\vspace{0.1in}



Iterating the bubble construction, one can produce more general central elements of the group algebra $\kk G_n$. One can keep splitting some facets of the bubble into thinner facets and placing dots on some of these facets. An example is shown in Figure~\ref{fig2_11}, with the foam there describing the central element 
\begin{equation}
    c_{n-2}^{(1)}+c_{n-2}^{(2)}+c_{n-2}^{(3)}+c_{n-2}^{(4)}.
\end{equation}
Here $c_{n-2}^{(i)}$ stands for the $i$-th copy of $c_{n-2}$ in the direct product $G_{n-2}^{\otimes 4}\subset G_n$. 


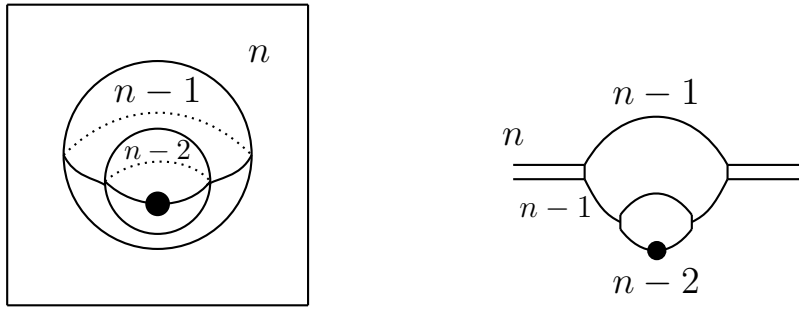
\begin{figure}
\begin{center}
\begin{tikzpicture}[scale=1]

\begin{scope}[shift={(0,0)}]


\draw[thick] (0,0) -- (0,4);
\draw[thick] (0,0) -- (4,0);
\draw[thick] (4,0) -- (4,4);
\draw[thick] (0,4) -- (4,4);
\draw[thick] (3.75,2) arc (0:360:1.75);
\node at (3.5,3.5) {$n$};

\draw[thick,dotted] (0.25,2) .. controls (0.75,3.2) and (3.25,3.2) .. (3.75,2);


\draw[thick] (3,1.5) arc (0:360:1);

\node at (2,3.3) {$n-1$};

\draw[thick] (0.25,2) .. controls (0.35,1.6) and (0.9,1.3) .. (1,1.3); 
\draw[thick] (3,1.3) .. controls (3.1,1.3) and (3.65,1.6) .. (3.75,2); 

\draw[thick,dotted] (1,1.3) .. controls (1.25,1.8) and (2.75,1.8) .. (3,1.3);

\draw[thick] (1,1.3) .. controls (1.25,0.8) and (2.75,0.8) .. (3,1.3);

\draw[thick,fill] (2.25,0.95) arc (0:360:0.25);

\node at (2,2.05) {$n-2$};
\end{scope}

\begin{scope}[shift={(8,0.25)}]


\draw[thick] (-1,1.9) -- (0.25,1.9);
\draw[thick] (-1,2.1) -- (0.25,2.1);

\draw[thick] (3.75,1.9) -- (5,1.9);
\draw[thick] (3.75,2.1) -- (5,2.1);

\draw[thick] (0.25,2.1) .. controls (0.75,4) and  (3.25,4) .. (3.75,2.1);

\draw[thick] (3,0.8) .. controls (3.15,0.9) and (3.65,1.0) .. (3.75,1.9);

\draw[thick] (0.25,1.9) .. controls (0.35,1.0) and (0.9,0.9) .. (1,0.8);

\draw[thick] (0.25,1.9) -- (0.25,2.1);
\draw[thick] (3.75,1.9) -- (3.75,2.1);

\begin{scope}[shift={(0,-0.50)}]

\draw[thick] (1,1.4) .. controls (1.25,2.45) and (2.75,2.45) .. (3,1.4);
\draw[thick] (1,1.2) .. controls (1.25,0.15) and (2.75,0.15) .. (3,1.2);

\draw[thick] (1,1.2) -- (1,1.4);
\draw[thick] (3,1.2) -- (3,1.4);

\draw[thick, fill] (2.25,0.43) arc (0:360:0.25);

\node at (-0.1,1.50) {$n-1$};

\node at (3.35,0.55) {$n-2$};

\end{scope}

\node at (-0.75,2.4) {$n$};

\node at (2,2.7) {$n-1$};

\end{scope}

\end{tikzpicture}
\end{center}

\caption{A more complicated bubble describing a central element.
The middle cross-section of this bubble is shown on the right.  } 
\label{fig2_11}
\end{figure}

%
%

\section{Defect lines and networks}  \label{sec_defect}

\subsection{Tensoring with induced representations}


Here, $\ukk$ denotes the trivial representation of $H$. 

\begin{lemma}\label{lemma:ind-res-isom}
Let $H\subseteq G$ be a  subgroup, and $M$ a $G$-module. There is a natural in $M$ isomorphism \
\begin{equation}\Ind_H^G \circ \Res_G^H (M) \stackrel{\sim}{\rightarrow} \Ind_H^G (\ukk) \otimes M.
\end{equation}
Consequently, the composition of restriction and induction functors is isomorphic to the functor of tensor product with the induced representation $\Ind_H^G (\ukk)$. 
\end{lemma}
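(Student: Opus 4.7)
The plan is to exhibit an explicit $\kk G$-module isomorphism and check naturality. Writing out the functors, $\Ind_H^G \circ \Res_G^H(M) = \kk G \otimes_{\kk H} M$ where $M$ is regarded as a $\kk H$-module by restriction, while $\Ind_H^G(\ukk) \otimes M = \kk[G/H] \otimes_{\kk} M$ with diagonal $G$-action (since $\Ind_H^G(\ukk) \cong \kk[G/H]$ as a $\kk G$-module).

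Next, I would define the map
\[ \phi_M : \kk G \otimes_{\kk H} M \longrightarrow \kk[G/H] \otimes_\kk M, \qquad g \otimes m \longmapsto gH \otimes gm,\]
and check that it descends from $\kk G \otimes_\kk M$: for $h \in H$, one has $gh \otimes m \mapsto ghH \otimes ghm = gH \otimes ghm$, which matches the image of $g \otimes hm$. For $G$-equivariance, $g' \cdot (g \otimes m) = g'g \otimes m \mapsto g'gH \otimes g'gm = g' \cdot (gH \otimes gm)$. In the other direction, I would define
\[ \psi_M : \kk[G/H] \otimes_\kk M \longrightarrow \kk G \otimes_{\kk H} M, \qquad gH \otimes m \longmapsto g \otimes g^{-1}m,\]
well-defined since replacing $g$ by $gh$ gives $gh \otimes h^{-1}g^{-1} m = g \otimes g^{-1} m$ after moving $h$ across $\otimes_{\kk H}$.

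A direct computation shows $\phi_M \psi_M = \id$ and $\psi_M \phi_M = \id$, and naturality in $M$ is immediate from the formulas (no choices were made that depended on $M$). The consequence about functors then follows: tensoring the isomorphism over all $M$ identifies the endofunctor $\Ind_H^G \circ \Res_G^H$ with $\Ind_H^G(\ukk) \otimes (-)$.

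There is no real obstacle here; the only subtle point is keeping track of the two distinct $G$-actions, namely the one on $\kk G \otimes_{\kk H} M$ (left multiplication on the first factor only) versus the diagonal action on $\kk[G/H] \otimes M$, and noticing that the twist $m \mapsto gm$ (respectively $m \mapsto g^{-1}m$) in the definitions of $\phi_M$ and $\psi_M$ is precisely what converts between these two action conventions.
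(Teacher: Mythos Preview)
Your proof is correct and is essentially the same as the paper's: the paper defines $\varphi(g\otimes m)=(g\otimes\underline{1})\otimes gm$ and inverse $\psi((g\otimes\underline{1})\otimes n)=g\otimes g^{-1}n$, which under the identification $\kk G\otimes_{\kk H}\ukk\cong\kk[G/H]$ are exactly your $\phi_M$ and $\psi_M$. You supply more of the well-definedness and equivariance checks than the paper does, but the argument is the same.
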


\proof
Define the map  
 \begin{equation}
 \varphi: \kk G\otimes_{\kk H} M \rightarrow (\kk G\otimes_{\kk H} \ukk) \otimes_{\kk} M, \quad 
 g\otimes m\mapsto (g\otimes \underline{1})\otimes gm.
 \end{equation}
 The module on the left is $\Ind_H^G \circ \Res_G^H (M)$, the one on the right is $\Ind_H^G (\ukk) \otimes M$. The map is a module map, natural in $M$. 
 
\vspace{0.1in} 
 
 Conversely, let $\psi:(\kk G\otimes_{\kk H} \ukk) \otimes_{\kk} M\rightarrow \kk G\otimes_{\kk H} M$ be given by $(g\otimes \underline{1})\otimes n\mapsto g\otimes g^{-1}n$. One can easily check that $\varphi$ and $\psi$ are inverses.
\endproof

Thus $H\subseteq G$ being a subgroup of $G$, 
\begin{equation}
\Ind_H^G \circ \Res_G^H\simeq V_H^G\otimes -
\end{equation}
is an isomorphism of functors. 
We denote by $\V_H^G := \Ind_H^G(\ukk)$ the induced representation of $G$. 

\vspace{0.1in}

\begin{figure}[ht]
\begin{center}
\begin{tikzpicture}[scale=0.6]

\begin{scope}[shift={(0,0)}]


\draw[thick,dashed] (-1.5,0) -- (5.5,0);

\draw[thick,fill] (0.75,0) arc (0:360:0.25);
\draw[thick,fill] (3.75,0) arc (0:360:0.25);

\node at (0.85,-.75) {\Large $\Ind$};
\node at (3.40,-.75) {\Large $\Res$};

\node at (-0.75,0.65) {\Large $G$};
\node at ( 2,0.65) {\Large $H$};
\node at ( 4.75,0.65) {\Large $G$};

\node at (7.0,0) {\Large $\simeq$};

\begin{scope}[shift={(2.5,0)}]
\draw[thick,dashed] (6,0) -- (11,0);
\node at (8.5,-1) {\Large $\V_H^G$};
\draw[thick,fill] (8.75,0) arc (0:360:0.25);
\end{scope}
\end{scope}

\begin{scope}[shift={(19,-2)}]


\draw[thick,dashed] (-1.5,0) -- (5.5,0);

\draw[thick,fill] (0.75,0) arc (0:360:0.25);
\draw[thick,fill] (3.75,0) arc (0:360:0.25);

\node at (0.85,-.75) {\Large $\Ind$};
\node at (3.40,-.75) {\Large $\Res$};

\node at (-0.75,0.65) {\Large $G$};
\node at ( 2,0.65) {\Large $H$};
\node at ( 4.75,0.65) {\Large $G$};

\draw[thick] (2,2.5) -- (2,4);

\draw[thick,dashed] (-1.5,4) -- (5.5,4);
\draw[thick,fill] (2.25,4) arc (0:360:0.25);
\node at (2,4.9) {\Large $\V_H^G$};

\draw[thick] (0.5,0) .. controls (0.75,2) and (1.75,2.5) .. (2,2.5);
\draw[thick] (3.5,0) .. controls (3.25,2) and (2.25,2.5) .. (2,2.5);
\end{scope}

\end{tikzpicture}
\end{center}

    \caption{Left: diagrammatic notations for the two functors. Right: a vertex to denote their isomorphism.   }
    \label{fig4_2}
\end{figure}
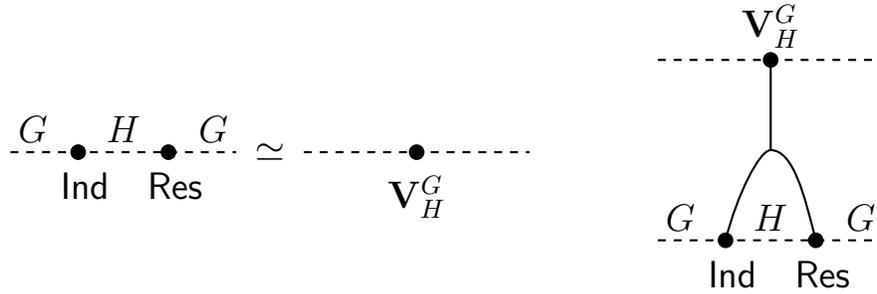

Isomorphism $\varphi$ of functors can be represented by an invertible trivalent vertex in Figure~\ref{fig4_2} going two marks on a dashed line, representing composition of induction and restriction functors, to 
a single  mark, labeling the tensor product functor. The inverse isomorphism can be represented by a reflected diagram. 

\vspace{0.1in}

Although $\mchar(\kk)\not=2$ is sufficient, we assume that $\mchar(\kk)=0$. So representations of finite groups over $\kk$ are completely reducible. 
Given a subrepresentation $V\subseteq \V_H^G$, choose an idempotent endomorphism $e_V\in \End(\V_H^G)$ of projection on $V$. Although the diagrammatic calculus is not rich enough to have these idempotents built-in,
it can be described by a box labeled $e_V$ on the vertical line depicting the identity natural transformation of the functor $\V_H^G\otimes -$, see Figure~\ref{fig4_1}.  

\vspace{0.1in}

\begin{figure}
    \centering
\begin{tikzpicture}[scale=0.6]


\draw[thick,dashed] (-0.5,0) -- (4.5,0);
\draw[thick,dashed] (-0.5,4) -- (4.5,4);

\draw[thick] (2,0) -- (2,1);

\draw[thick] (2,3) -- (2,4);

\draw[thick] (1,1) rectangle (3,3);

\node at (4,1.5) {\Large $G$};

\node at (0,2.5) {\Large $G$};

\node at (2,-.85) {\Large $\V_H^G$};
\node at (2,4.85) {\Large $\V_H^G$};

\node at (2,2) {\Large $e_V$};
\end{tikzpicture}
    \caption{Idempotent $e_V$ on the endomorphism of $\V_H^G$.}
    \label{fig4_1}
\end{figure}
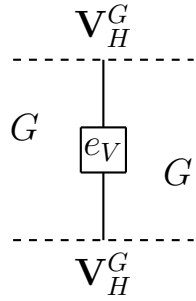

The quotient group $G_n/G_{n-1}^{(1)}$ is the symmetric group $S_2$, and its two-dimensional regular representation, viewed as a representation of $G_n$, will be denoted $V_1$. The latter representation is the induced from the trivial representation 
of $G_{n-1}^{(1)}$, 
\[ V_1 \cong  \Ind_{G_{n-1}^{(1)}}^{G_n} (\ukk).
\]
We depict the corresponding isomorphism of functors in Figure~\ref{fig4_3}. 

\vspace{0.1in}

\begin{figure}
    \centering
\begin{tikzpicture}[scale=0.6]

\begin{scope}[shift={(0,0)}]


\draw[thick] (-1,-.1) -- (0.5,-.1);
\draw[thick] (-1,.1) -- (0.5,.1);

\draw[thick] (3.5,-.1) -- (5,-.1);
\draw[thick] (3.5, .1) -- (5, .1);

\draw[thick] (0.5,-0.1) -- (0.5,0.1);
\draw[thick] (3.5,-0.1) -- (3.5,0.1);

\node at ( 4.5,0.65) {\Large $n$};
\node at (-0.5,0.65) {\Large $n$};

\draw[thick] (0.5, .1) .. controls (1,2) and (3,2) .. (3.5, .1);
\draw[thick] (0.5,-.1) .. controls (1,-2) and (3,-2) .. (3.5,-.1);

\node at (2,2) {\Large $n-1$};
\node at (2,-0.5) {\Large $n-1$};

\node at (-0.05,-1.35) {\Large $\Ind$};

\node at (4.05,-1.35) {\Large $\Res$};

\node at (7,0) {\Large $\simeq$};
\end{scope}

\begin{scope}[shift={(3,0)}]

\draw[thick] (6,0) -- (12,0);

\node at (06.5,-.5) {\Large $n$};
\node at (11.5,-.5) {\Large $n$};

\draw[thick,fill] (9.25,0) arc (0:360:0.25);
\node at (9,-1) {\Large $V_1$};
\end{scope}

\end{tikzpicture}
    \caption{Functor isomorphism.}
    \label{fig4_3}
\end{figure}
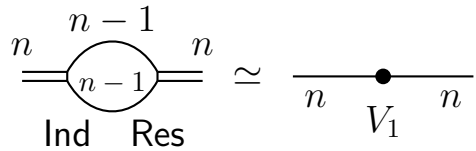

Recall the involution $\beta_n = (1,2^{n-1}+1)(2,2^{n-1}+2)\cdots (2^{n-1},2^n) \in G_n$ at the end of  Section~\ref{subsection:iterated-wreath-products}.
Under the quotient map, $\beta_n$ becomes the nontrivial element of $S_2$, which we may also denote $\underline{\beta}_n$. Multiplication by $\beta_n$ is an involutive 
endomorphism of $V_1$, see Figures~\ref{fig4_4} left and Figure~\ref{fig4_5}. 
Figure~\ref{fig4_4} right describes the foam that represents the corresponding endomorphism of $\Ind\circ\Res$, under its isomorphism with the tensor product functor. The foam consists of a flip between two $(n-1)$ facets, with the intersection interval shown in red. 

\vspace{0.1in}

\begin{figure}
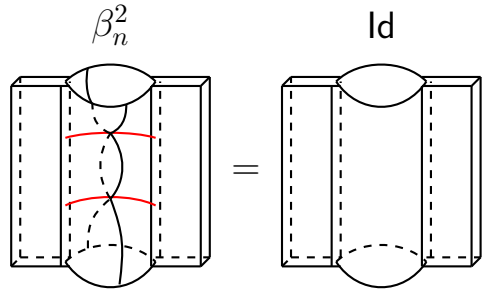

    \centering

    \caption{Equality of foams corresponding to the relation $\beta_n^2=1$ (as endomorphisms of $\Ind\circ \Res$ functor).}
    \label{fig4_6}
\end{figure}


\subsection{Foams for idempotents and basic relations on them}

Idempotents 
$e_+=\frac{1+{\beta_n}}{2}$ and $e_-=\frac{1-{\beta_n}}{2}$ in the group algebra $\kk G_n$ give corresponding idempotents, also denoted $e_+,e_-$, in the quotient algebra $\kk S_2\cong \End_{G_n}(V_1)$. These idempotents produce direct summands of representation $V_1$, the trivial and the sign representations,  that we denote $V_+$ and $V_-$, so that 
\begin{equation*}
    V_1 \cong V_+ \oplus V_-. 
\end{equation*} 
Note that $V_+\cong \ukk$, which is our two notations for the trivial representation. 

\vspace{0.1in} 

Under functor isomorphism $V_1\otimes - \cong \Ind\circ\Res$ these idempotents become idempotents in the endomorphism algebra of the latter functor,  also denoted $e_+$ and $e_-$. In the foam notation, we represent these idempotents in $\End(\Ind\circ \Res)$ by disks, green and blue, respectively, that intersect two opposite seam lines, with labels $+$ and $-$, respectively, see Figures~\ref{fig4_10} and~\ref{fig4_9}. 

\vspace{0.1in}

\begin{figure}
    \centering

    \caption{Idempotent  $e_-=\displaystyle{\frac{1-\beta_n}{2}}$. }
    \label{fig4_9}
\end{figure}

Some of the obvious relations 
\begin{equation*}
    1 = e_+ + e_-, \hspace{4mm} e_+e_-=e_-e_+ =0, \hspace{4mm}  e_+^2=e_+, \hspace{4mm} 
    e_-^2=e_-
\end{equation*}
are shown in Figures~\ref{fig4_9id}, and~\ref{fig4_16}.
Figure~\ref{fig4_26} shows how to convert from a planar to a foam representation of the identity endomorphism of $V_-$, also see Section~\ref{subsection:simplified-planar-notation}.

\vspace{0.1in}

\begin{figure}
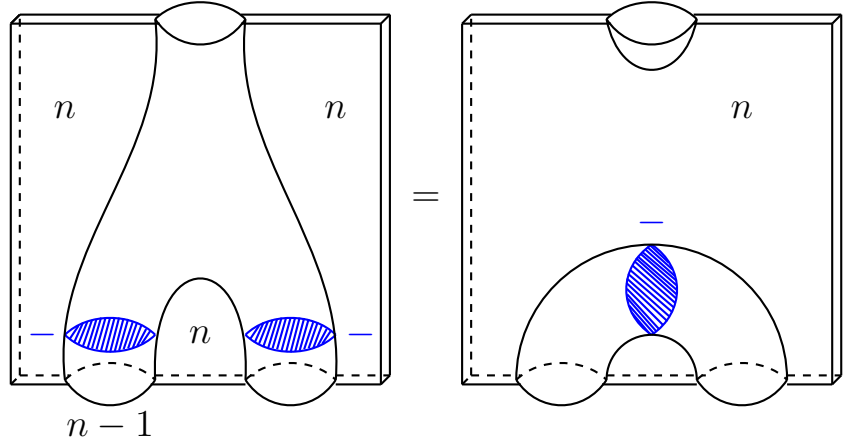

    \centering

    \caption{This relation follows by expanding the ``neck" on top left as in Figure~\ref{fig4_9id} and applying relations in Figures~\ref{fig4_27} and~\ref{fig4_29}.}
    \label{fig4_30}
\end{figure}

The last relation implies the relation in Figure~\ref{fig4_18}. 
Converting into the language of tensoring with representations, we can interpret it as saying that functor isomorphisms between tensoring with $V_-\otimes V_-$ and $V_+$ given by the two tubes at the top and bottom halves of Figure~\ref{fig4_18} left are mutually-inverse on one side. Consequently, they are mutually-inverse on the other side as well, as shown in Figure~\ref{fig4_17}, which can also be derived directly. Note that multiple blue disks along the tube in that figure can be reduced to a single one, via Figure~\ref{fig4_16} top row. Similarly, multiple green disks along a tube can be reduced to a single one, see Figure~\ref{fig4_16} top row.

\begin{figure}
    \centering

    \caption{ Horizontal circles indicate that the two $(n-1)$-facets on the left picture constitute a 2-torus inside the foam. Figure~\ref{fig4_16} relation allows to duplicate the $e_-$-disk, if desired. Compare with Figure~\ref{fig4_14} below.}
    \label{fig4_17}
\end{figure}

\subsection{Simplified (planar) notation}
\label{subsection:simplified-planar-notation}

Much simpler (and conventional) diagrammatics for representations of $S_2\simeq G_n/G_{n-1}^{(1)}$ are shown in Figures~\ref{fig4_38}, \ref{fig4_11}, \ref{fig4_12}, \ref{fig4_13}, and \ref{fig4_14}. Lines for the identity endomorphism of the trivial representation can be erased, see Figure~\ref{fig4_11}. 
We are essentially left with the sign representation $V_-$ and isomorphisms given by a cup and a cap between its tensor square and the trivial representation. 

\vspace{0.1in}

\begin{figure}
    \centering

    \caption{A functor isomorphism.}
    \label{fig4_20}
\end{figure}

Consider the two diagrams in Figure~\ref{fig4_20}. Each of them describes a summand of a composition  of restriction and induction functors. In the diagram on the left, we first restrict from $G_n$ to $G_{n-1}\times G_{n-1}$, then further restrict to $G_{n-2}\times G_{n-2}\times G_{n-1}$. After that we induce back to $G_n$. The ``minus" idempotent is applied for the composition of restriction and induction between $G_{n-1}$ and $G_{n-2}\times G_{n-2}$. In the diagram on the right, a similar functor  is described, but the inner induction and restriction is for the other factor of the product $G_{n-1}\times G_{n-1}$. 

\begin{proposition}\label{prop:rotate-minus}
The isomorphism in Figure~\ref{fig4_20} holds.
\end{proposition}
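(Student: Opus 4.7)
The plan is to identify each side of Figure~\ref{fig4_20} with a composition $I^n_{n-1}\circ F\circ R^{n-1}_n$ for an endofunctor $F$ of $\kk G_{n-1}^{(1)}\dmod$, and then to produce a natural isomorphism between the two choices of $F$ using the $\beta_n$-transposition symmetry. On the left, the inner data is $F_L = e_-^{(1)}\boxtimes\Id$: the sign idempotent $e_-$ arising from the restriction-induction between $G_{n-1}$ and $G_{n-2}\times G_{n-2}$ is placed on the first factor of $G_{n-1}^{(1)} = G_{n-1}\times G_{n-1}$, and the second factor is left alone. On the right, $F_R = \Id\boxtimes e_-^{(2)}$ swaps the roles of the two factors.

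The crucial input is that conjugation by $\beta_n$ implements the transposition $\tau$ of the two $G_{n-1}$ factors, by equation~\eqref{eq_tau}, and carries the subgroup $(G_{n-2}\times G_{n-2})\times G_{n-1}$ isomorphically onto $G_{n-1}\times(G_{n-2}\times G_{n-2})$. Therefore the bimodule functor $T_{12}$ from Section~\ref{subsection:Mackey-induction-restriction-formula} satisfies
\[
T_{12}\circ F_L \;\cong\; F_R\circ T_{12},
\]
since $T_{12}$ transports the sign idempotent from one factor to the other. I would then splice this with the canonical isomorphism $\ell(\beta_n):T_{12}R^{n-1}_n\xrightarrow{\sim} R^{n-1}_n$ from Figure~\ref{fig2_4} and its biadjoint partner $I^n_{n-1}\xrightarrow{\sim} I^n_{n-1}T_{12}$, obtaining
\[
I^n_{n-1}F_L R^{n-1}_n \;\cong\; I^n_{n-1}T_{12}F_L R^{n-1}_n \;\cong\; I^n_{n-1}F_R T_{12}R^{n-1}_n \;\cong\; I^n_{n-1}F_R R^{n-1}_n,
\]
which is precisely the desired natural isomorphism between the two functors depicted in Figure~\ref{fig4_20}.

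At the level of foams, this chain has a concrete geometric realization: one inserts a red $\beta_n$ intersection seam on the $(n,n{-}1)$-seam adjacent to the inner blue $e_-$ disk, slides the disk across via the commutation $T_{12}F_L\cong F_R T_{12}$, and then absorbs a second red seam on the opposite side. I expect the main obstacle to be bookkeeping: verifying that this foam manipulation is well defined and yields an honest mutually inverse pair of natural transformations. This check should reduce to relations already in hand, namely the cancellation $\ell(\beta_n)'\ell(\beta_n)=\id$ of Figure~\ref{fig2_5}, the immersion-seam isotopy of Figure~\ref{fig4_7}, and the manifest $\tau$-equivariance of $e_-$ (which tracks only which factor is singled out). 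No new relations beyond those of Sections~\ref{foams} and~\ref{sec_defect} should be needed.
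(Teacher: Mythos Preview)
Your proposal is correct and matches the paper's approach. The paper's proof is even terser: it simply exhibits the explicit foam of Figure~\ref{fig4_21} (and its $xy$-reflection) as the mutually-inverse isomorphisms, and that foam is precisely the geometric realization of your chain---the single red intersection interval there, joining the two $(n,n{-}1)$-seams, is the $\beta_n$-flip you insert via $\ell(\beta_n)$ and its induction-side partner, carrying the blue $e_-$ dot from one thin facet to the other.
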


\proof
This is easily proven algebraically. The diagrammatic interpretation of mutually-inverse isomorphisms between these functors are given by the foam in Figure~\ref{fig4_21} and its reflection about the $xy$-plane. 
\endproof

Denote by $\mc{V}$ the functor given by the diagram on the left of Figure~\ref{fig4_20}. 

\vspace{0.1in}

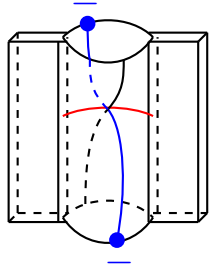
\begin{figure}
\centering 
\begin{tikzpicture}[scale=0.6]


\draw[thick,dashed] (17,.1) -- (17.2,.1);
\draw[thick,dashed] (13,.1) -- (15.1,.1);
\draw[thick] (16.9,-.1) -- (19,-.1);
\draw[thick] (12.8,-.1) -- (15.1,-.1);
\draw[thick,dashed] (12.8,-.1) -- (13,.1);

\draw[thick,dashed] (15,0) .. controls (15.5,.5) and (16.5,.5) .. (17,0);
\draw[thick] (15,0) .. controls (15.5,-.75) and (16.5,-.75) .. (17,0);

\draw[thick,dashed] (15.1,.1) -- (15.1,4);
\draw[thick,dashed] (17.1,.1) -- (17.1,4);
\draw[thick] (14.9,-.1) -- (14.9,3.9);
\draw[thick] (16.9,-.1) -- (16.9,3.9);
\draw[thick,dashed] (13,.1) -- (13,4.1);

\draw[thick] (12.8,-.1) -- (12.8,3.9);

\draw[thick] (19,-.1) -- (19,3.9);
\draw[thick] (19.2,.1) -- (19.2,4.1);
\draw[thick] (19,-.1) -- (19.2,.1);

\draw[thick] (15,4) .. controls (15.5,4.5) and (16.5,4.5) .. (17,4);
\draw[thick] (15,4) .. controls (15.5,3.25) and (16.5,3.25) .. (17,4);

\draw[thick] (12.8,3.9) -- (15.1,3.9);
\draw[thick] (16.9,3.9) -- (19,3.9);
\draw[thick] (13,4.1) -- (15.1,4.1); 
\draw[thick] (12.8,3.9) -- (13,4.1);
\draw[thick] (16.9,4.1) -- (19.2,4.1);
\draw[thick] (19,3.9) -- (19.2,4.1);

\draw[thick,fill,blue] (15.80, 4.3) arc (0:360:0.25);
\draw[thick,fill,blue] (16.48,-0.5) arc (0:360:0.25);

\draw[line width=1.60, red] (15,2.25) .. controls (15.5,2.5) and (16.5,2.5) .. (17,2.25);
\draw[line width=1.60,blue] (16.2,-.5) .. controls (16.4,.5) and (16.4,1.9) .. (16,2.4);

\draw[thick,dashed] (15.5,.3) .. controls (15.5,.8) and (15.5,1.9) .. (16,2.4);
\draw[thick] (16,2.4) .. controls (16.35,2.8) and (16.35,3.1) .. (16.35,3.5);
\draw[line width=1.60,dashed,blue] (16,2.4) .. controls (15.6,2.8) and (15.6,3.1) .. (15.6,3.5);
\draw[line width=1.60,blue] (15.6,3.5) .. controls (15.55,3.7) and (15.55,4.1) .. (15.55,4.3);

\node[blue] at (15.5,4.85) {\Large $-$};
\node[blue] at (16.25,-1) {\Large $-$};
\draw[thick,dashed] (17,.1) -- (19.2,.1);
\end{tikzpicture}
    \caption{Foam for  the functor isomorphism in Figure~\ref{fig4_20}. The blue line depicts the identity endomorphism of the ``blue point" functor (direct summand of the induction-restriction functor isomorphic to $V_-\otimes -$). The black line on the other thin facet is drawn to help see the facet. The two thin facets intersect along the red interval.}
    \label{fig4_21}
\end{figure}

The quotient group $G_n/G_{n-2}^{(2)}$ is naturally isomorphic to the dihedral group $D_4$ of symmetries of the square,
\begin{equation} \label{eq_dihed}
 G_n/G_{n-2}^{(2)} \cong D_4.
\end{equation}
The quotient map 
\begin{equation}
    G_n/G_{n-2}^{(2)} \lra G_n/G_{n-1}^{(1)} \cong S_2 
\end{equation}
corresponds to the homomorphism $D_4\lra S_2$ where to a symmetry of $D_4$ one associates the induced permutation of the two diagonals. Thinking of the quotient group 
$G_n/G_{n-2}^{(2)}$  as all symmetries of a full binary tree of depth 2, to get the homomorphism we map the tree to the square so that the two depth one branches correspond to the diagonals of the square, see Figure~\ref{fig5_21_a}. 

\vspace{0.1in}

\begin{figure}
    \centering
\begin{tikzpicture}[scale=0.6]


\draw[thick] (0,0) -- (1.5,-1);
\draw[thick] (0,0) -- (-1.5,-1);

\draw[thick] (1.5,-1) -- (2.5,-2);
\draw[thick] (1.5,-1) -- (0.5,-2);

\draw[thick] (-1.5,-1) -- (-0.5,-2);
\draw[thick] (-1.5,-1) -- (-2.5,-2);

\draw[thick,fill] (1.7,-1) arc (0:360:2mm);
\draw[thick,fill] (-1.3,-1) arc (0:360:2mm);

\draw[thick,fill] (2.7,-2) arc (0:360:2mm);
\draw[thick,fill] (0.7,-2) arc (0:360:2mm);
\draw[thick,fill] (-0.3,-2) arc (0:360:2mm);
\draw[thick,fill] (-2.3,-2) arc (0:360:2mm);

\node at (0.02,0) {\large $\blacksquare$};

\node at (-2.5,-2.85) {\Large $1$};
\node at (-0.6,-2.85) {\Large $2$};
\node at ( 0.6,-2.85) {\Large $3$};
\node at ( 2.5,-2.85) {\Large $4$};

\node at (5.25,-1) {\Large $\Rightarrow$};


\begin{scope}[shift={(3,0)}]


\draw[line width=0.65mm,Plum] (5,-3) -- (9,-3);
\draw[line width=0.65mm,Plum] (5,-3) -- (5,1);
\draw[line width=0.65mm,Plum] (5,1) -- (9,1);
\draw[line width=0.65mm,Plum] (9,1) -- (9,-3);

\draw[thick] (5,-3) -- (9,1);
\draw[thick] (5,1) -- (9,-3);

\draw[thick,fill] (5.2,1) arc (0:360:2mm);
\draw[thick,fill] (5.2,-3) arc (0:360:2mm);
\draw[thick,fill] (9.2,-3) arc (0:360:2mm);
\draw[thick,fill] (9.2,1) arc (0:360:2mm);

\node at (4.5,-3.5) {\Large $1$};
\node at (9.5, 1.5) {\Large $2$};
\node at (4.5, 1.5) {\Large $3$};
\node at (9.5, -3.5) {\Large $4$};

\draw[thick] (6.1,-1.9) -- (7,-2.4);
\draw[thick] (7.9,-1.9) -- (7,-2.4);

\node at (7.02,-2.4) {\large $\blacksquare$};

\draw[thick,fill] (6.3,-1.9) arc (0:360:2mm);
\draw[thick,fill] (8.1,-1.9) arc (0:360:2mm);

\end{scope}

\end{tikzpicture}
    \caption{An identification of symmetries of a depth 2 full binary tree with those of a square. Nodes $1,2,3,4$ of the tree are mapped to vertices of the square.}
    \label{fig5_21_a}
\end{figure}
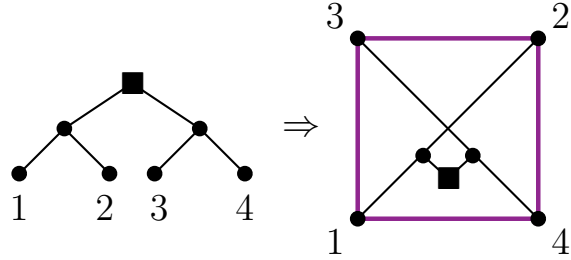


Denote by $V$ the unique (up to isomorphism) two-dimensional irreducible representation of $D_4$, given by its action by isometries on $\R^2$. 

\begin{prop}
 Under the above group isomorphism, the functor of tensor product with $V$ is isomorphic to the functor $\mc{V}$ given in Figure~\ref{fig4_20}. 
\end{prop}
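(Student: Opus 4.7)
The plan is to identify the functor $\mc{V}$ with tensoring by an explicit two-dimensional irreducible $G_n$-representation, and then match that representation with $V$. First, apply Lemma~\ref{lemma:ind-res-isom} to the index-two inclusion $G_{n-2}^{(1)}\subset G_{n-1}$ acting on the first tensor factor of $G_{n-1}^{(1)}=G_{n-1}\times G_{n-1}$: the inner restriction-induction in Figure~\ref{fig4_20} is naturally isomorphic to tensoring with $\Ind_{G_{n-2}^{(1)}}^{G_{n-1}}(\ukk)\cong V_+\oplus V_-$. Inserting the idempotent $e_-$ on the inner seam, as prescribed by Figures~\ref{fig4_9} and~\ref{fig4_19}, projects this onto the sign summand $V_-$, inflated from the quotient $G_{n-1}/G_{n-2}^{(1)}\cong C_2$. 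Hence, after the two outer restrictions and the inner induction-with-minus, the composite becomes the functor ``tensor with $V_-\boxtimes\ukk$'' applied to the restricted $G_{n-1}^{(1)}$-module.

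Next I would invoke the projection formula, a natural extension of Lemma~\ref{lemma:ind-res-isom}: for $H\subseteq G$ of finite index, a $G$-module $M$, and an $H$-module $N$, there is a natural isomorphism $\Ind_H^G(\Res_G^H(M)\otimes N)\cong M\otimes \Ind_H^G(N)$. Applied with $H=G_{n-1}^{(1)}\subset G=G_n$ and $N=V_-\boxtimes\ukk$, this yields $\mc{V}(M)\cong M\otimes W$ naturally in $M$, where
\[
 W\;:=\;\Ind_{G_{n-1}^{(1)}}^{G_n}\bigl(V_-\boxtimes\ukk\bigr).
\]
The proposition thereby reduces to proving $W\cong V$ as $G_n$-modules, with $V$ viewed via inflation along $G_n\twoheadrightarrow G_n/G_{n-2}^{(2)}\cong D_4$ from~\eqref{eq_dihed}.

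I then identify $W$ by three short checks. \emph{Dimension:} $\dim W=[G_n:G_{n-1}^{(1)}]\cdot 1=2$. \emph{Factoring through $D_4$:} the normal subgroup $G_{n-2}^{(2)}\subset G_n$ lies inside $G_{n-1}^{(1)}$ and acts trivially on $V_-\boxtimes\ukk$ (since $V_-$ is inflated from $G_{n-1}/G_{n-2}^{(1)}$), hence it acts trivially on the induced module $W$, so $W$ descends to $G_n/G_{n-2}^{(2)}\cong D_4$. \emph{Irreducibility:} conjugation by $\beta_n\in G_n\setminus G_{n-1}^{(1)}$ interchanges the two $G_{n-1}$-factors (cf.~\eqref{eq_tau}), so $(V_-\boxtimes\ukk)^{\beta_n}\cong\ukk\boxtimes V_-\not\cong V_-\boxtimes\ukk$; Mackey's irreducibility criterion for the normal index-two subgroup $G_{n-1}^{(1)}$ (equivalently, elementary Clifford theory) then guarantees $W$ is irreducible. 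Since $D_4$ has a unique irreducible two-dimensional representation, namely $V$, we conclude $W\cong V$, completing the proof.

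I do not anticipate a serious obstacle. The only point that requires care is verifying that the foam idempotent $e_-$ inserted on the inner seam really implements the projection onto $V_-$ under the functor isomorphism of Lemma~\ref{lemma:ind-res-isom}; but this is built into the conventions of Figures~\ref{fig4_10}--\ref{fig4_9}, where $e_\pm=(1\pm\beta_n)/2$ are precisely the trivial and sign symmetrizers in $\End_{G_{n-1}}\bigl(\Ind_{G_{n-2}^{(1)}}^{G_{n-1}}(\ukk)\bigr)\cong \kk C_2$.
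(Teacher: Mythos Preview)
Your proof is correct and takes a genuinely different route from the paper's.

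The paper argues by \emph{complement}: it identifies the full composite $\Ind_{G_{n-2}\times G_{n-2}\times G_{n-1}}^{G_n}\circ\Res$ with tensoring by $\kk[D_4/H]$ for $H=\{1,(34)\}$, notes that the $e_+$-summand of $\mc{V}$'s defining diagram is the smaller $\Ind_{G_{n-1}^{(1)}}^{G_n}\circ\Res\cong (V_+\oplus V_-)\otimes -$, and then reads off $\mc{V}\cong V\otimes -$ from the character decomposition $\kk[D_4/H]\cong V\oplus V_+\oplus V_-$. Your argument instead goes \emph{directly}: the projection formula gives $\mc{V}\cong W\otimes -$ with $W=\Ind_{G_{n-1}^{(1)}}^{G_n}(V_-\boxtimes\ukk)$, and a short Clifford/Mackey check shows $W$ is the unique two-dimensional irreducible of $D_4$. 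Your route avoids computing the full permutation module and its character, and it yields an explicit model for $V$ as an induced representation; the paper's route is more hands-on and simultaneously establishes the useful decomposition of $\kk[D_4/H]$ that it reuses immediately afterward when analyzing $\mc{V}^2$.
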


\proof
  Functor $\mc{V}$ is a direct summand of the composition $\Ind\circ \Res$ for restricting from $G_n$ to the subgroup $G_{n-2}\times G_{n-2}\times G_{n-1}$ and inducing back. In $D_4$, the corresponding subgroup is $H-\{1,(34)\}$. 
  The complement to $\mc{V}$ in the above functor is $\Ind\circ \Res$ for the subgroup $G_{n-1}^{(1)}$ (since the complement is given by putting the ``plus" label on the dot in Figure~\ref{fig4_20} left and ``+" labels may be erased). Thus, the complementary functor is isomorphic to the direct sum of tensoring with the trivial $V_+$ and the sign representation $V_-$.
  
  \vspace{0.1in} 
  
  The composition functor of restriction then induction for the subgroup $H$ in $D_4$ is isomorphic to the functor of the tensor product with the four-dimensional representation $\kk[D_4/H]$. It is easy to decompose this representation into the direct sum 
  \begin{equation}
       \kk[D_4/H] \cong V \oplus V_+ \oplus V_-,
  \end{equation}
  using characters (in characteristic 0, see the table below) or directly (as long as $\mchar(\kk)\not=2$). 
\endproof

So far we have accounted for the fundamental representation $V$ of $D_4$ and two one-dimensional representations: the trivial $V_+$ and the sign representation $V_-$, on which the normal subgroup $\{1,(12),(34),(12)(34)\}$ acts trivially. 

\vspace{0.1in} 

Denote the remaining two one-dimensional representations of $D_4$ 
by $V_{-+}$ and $V_{--}$. On $V_{-+}$ generators $(12)$ and $(1324)$ act by $-1$, and on $V_{--}$ generators $(12)$ and $(13)(24)$ act by $-1$. Also see Figure~\ref{fig4_39}.

\vspace{0.1in} 

The table below lists the characters of the five irreducible 
representations of $D_4$ and of representation $\kk[D_4/H]$. 

\vspace{0.1in} 

\begin{center}
\begin{tabular}{ | c || c | c | c | c |c |} 
\hline 
 & $1$ &  $(12)$ & $(12)(34)$ & $(1324)$ &  $(13)(24)$ \\
 \hline \hline 
  $V\:\:\:\:$ & $2$  & $\:\:\: 0$  & $-2$ & $\:\:\: 0$ & $\:\:\: 0$ \\ 
  \hline 
  $V_+\:\:$ & $1$  & $\:\:\: 1$  &  $\:\:\:\: 1$ & $\:\:\: 1$ & $\:\:\: 1$ \\ 
  \hline 
  $V_-\:\:$ & $1$  & $\:\:\: 1$ & $\:\:\:\: 1$ & $-1$ & $-1$ \\ 
  \hline 
  ${V_{-+}}$ & $1$ & $-1$ & $\:\:\:\: 1$ & $-1$ & $\:\:\: 1$ \\ 
  \hline 
  ${V_{--}}$ & $1$  & $-1$ & $\:\:\:\: 1$  & $\:\:\: 1$ & $-1$ \\ 
  \hline 
  ${\kk[D_4/H]}$ & $4$ & $\:\:\: 2$  & $\:\:\:\: 0$ & $\:\:\: 0$ & $\:\:\: 0$\\
 \hline
\end{tabular}
\end{center}

\vspace{0.1in} 

Consider the endofunctor $\mc{V}'$ in the category of $G_n$-modules given by the diagram in Figure~\ref{fig4_21b} left. This functor is a direct summand of the composition of restriction to $G_{n-2}^{(2)}$ then induction back to $G_n$ functor. 
The latter composition is isomorphic to the tensor product with the 8-dimensional representation $\kk[G_n/G_{n-2}^{(2)}].$ The minus idempotents on both thin edges pick out a direct summand functor given by the tensor product with a two-dimensional representation. 

\vspace{0.1in} 

One way to understand functor $\mc{V}'$ is by computing the composition $\mc{V}\circ \mc{V}$, see Figure~\ref{fig4_23}. The computation uses the relations in Figure~\ref{fig4_24}. The square $\mc{V}^{2}$ decomposes as the sum of two functors, 
\begin{equation}
    \mc{V}^2 \ \cong \ (\Ind_{n-1}^n\circ \Res^{n-1}_n) \oplus \mc{V}' \cong (V_+\otimes \ast)\oplus (V_-\otimes \ast) \oplus \mc{V}',
\end{equation}
since $\Ind_{n-1}^n\circ \Res^{n-1}_n$ is isomorphic to the functor of tensoring with $V_+\oplus V_-$. 
At the same time, we have decomposition of tensor product of representations
\begin{equation}\label{eq_tens_prod}
    V\otimes V \ \cong \ V_+ \oplus V_- \oplus V_{-+}\oplus V_{--},
\end{equation}
(tensor square of the fundamental $D_4$ representation $V$ is the sum of the four irreducible one-dimensional $D_4$ representations). 

\vspace{0.1in} 

Thus, functor $\mc{V}'$ is isomorphic to the functor of tensoring with $V_{-+}\oplus V_{--}$, 
\begin{equation}
    \mc{V}' \ \cong \ (V_{-+}\oplus V_{--})\otimes \ast . 
\end{equation}

The foam that transposes the two thin edges of this diagram, together with the minus dots on them, see Figure~\ref{fig4_21b}, is an endomorphism of the diagram of order two. The two idempotents (symmetrizer and antisymmetrired) made off this endomorphism give functors isomorphic to functors of tensor product with $V_{-+}$ and $V_{--}$, respectively. 
 
\vspace{0.1in}

\begin{figure}
\centering 

    \caption{Direct sum decomposition of $\Res\circ\Ind$ into the identity and the  transposition functors, see Proposition~\ref{prop_canonical_decomp_functors} and Figure~\ref{fig2_1}. }
    \label{fig4_22}
\end{figure}

Figure~\ref{fig4_22} gives a direct sum decomposition of $\Res\circ\Ind$ as the identity and the transposition functors, and Figure~\ref{fig4_25} shows that functors $\mc{V}'$ and tensoring with $V_{-+}\oplus V_{--}$ are isomorphic.

\vspace{0.1in}

\begin{figure}
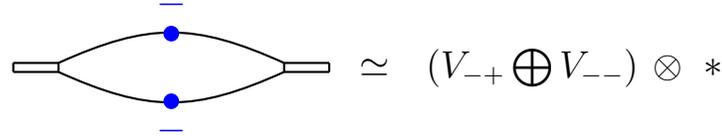

    \centering

    \caption{Functor $\mc{V}'$ is isomorphic to the functor of  
     tensoring with $V_{-+}\oplus V_{--}$. }
    
    \label{fig4_25}
\end{figure}


Each of the five irreducible $D_4$-representations: 
trivial rep $V_+$, sign rep $V_-$, fundamental two-dimensional representation $V$ and the two other one-dimensional representations $V_{-+}, V_{--}$ can be described by a suitable graph, together with an idempotent linear combination of foams assigned to it. The graphs together with the idempotents, in our notations, are shown in Figure~\ref{fig4_41}. 

\vspace{0.1in}

The computation in Figure~\ref{fig4_23}, together with direct  decompositions for the two terms at the bottom line of the figure, can be translated into the direct sum decomposition (\ref{eq_tens_prod}) for the tensor square $V^{\otimes 2}$. Decompositions of tensor products of other pairs of irreducible representations of $D_4$ can be derived in a similar way. For instance, an isomorphism $V\otimes V_-\cong V$ can be related to the identity in Figure~\ref{fig4_40} and a similar identity obtained by reversing the order of the two halves of the left picture and changing the right  hand side to the  identity natural transformation of the functor $\Res\circ\Ind\circ \Res$. This provides a foam interpretation and lifting of decompositions of tensor products of irreducible $D_4$-representations.

\begin{figure}
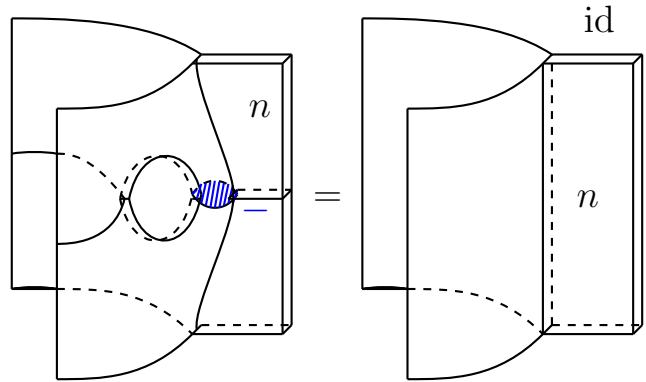

    \centering

    \caption{The two vertical halves of the image on the left are mutually inverse isomorphisms.  }
    \label{fig4_40}
\end{figure}


\subsection{Rooted trees and higher depth representations}
\label{subsection_rooted_trees}

Consider the chain of inclusions 
\begin{equation*}
G_n\supset G_{n-1}^{(1)}\supset G_{n-2}^{(2)}\supset\ldots \supset G_{0}^{(n)}=\{1\}.
\end{equation*}
In particular,  
$G_n/G_{n-1}^{(1)}\simeq S_2$, the symmetric group of order $2$, and 
$G_n/G_{n-2}^{(2)} \simeq D_4$, the dihedral group of order $4$. To describe irreducible representations of $D_4$ via foams we had to use foams that go between $n$-facets but in the middle may have facets of thickness $n-2$ (a dot labeled $-$ on an $(n-1)$-facet, as in Figure~\ref{fig4_41}, requires descending to $(n-2)$-facets to define it).  

\vspace{0.1in}

Let us say that a representation of $G_n$ has \emph{depth $k$} if the subgroup $G_{n-k}^{(k)}$ acts nontrivially  on it, while $G_{n-k-1}^{(k+1)}$ acts trivially. To create functors of tensoring with depth $k$ representations using foams, apply enough restriction functors to get from a line of thickness $n$ to a line of thickness $n-k-1$ in at least one location of the diagram, and then go back to a single line of thickness $n$. Denote the resulting graph  by $\Gamma$. There is a composition $F(\Gamma)$ of restriction and induction functors associated with $\Gamma$, and $F(\Gamma)$ is isomorphic to tensoring  with a suitable induced representation $W$ of $G_n$. One can then introduce some idempotent $e\in \End(F(\Gamma)) $ given by a linear combination of foams with boundary $\Gamma$ on bottom and top. Idempotent $e$ defines a direct summand of $F(\Gamma)$ as well as a direct summand of $W$ once an isomorphism between $F(\Gamma)$ and $W\otimes -$ is fixed.

\vspace{0.1in} 


Irreducible representations of the wreath product $G\wr S_n$ over an algebraically closed field, where $G$ is a finite group, was studied by Kerber~\cite[Chapter 2]{Ke} and the representations of the wreath product $G\wr H$ of two permutation groups $G$ and $H$ are discussed in Meldrum \cite{Me}. The irreducible representations of the (iterated) $n$-th wreath product $G_n$ over a field $\kk$ of characteristic different from $2$ were classified by Orellana, Orrison, and Rockmore~\cite[Proposition 3.1]{OOR}, as a special case of their classification of iterated wreath products of the cyclic group $C_m$, for $m=2$. Their classification gives a bijection between isomorphism classes of irreducible representations and isomorphism classes of complete binary trees of depth $n-1$ with vertices labeled by  signs $+,-$ and an additional assumptions that at each vertex $v$ labeled by the minus sign $-$ the standard symmetry $\beta_k$, see Section~\ref{subsection:iterated-wreath-products}, applied to the subtree at the vertex $v$,  preserves signs of vertices. 

\vspace{0.1in}

\begin{figure}
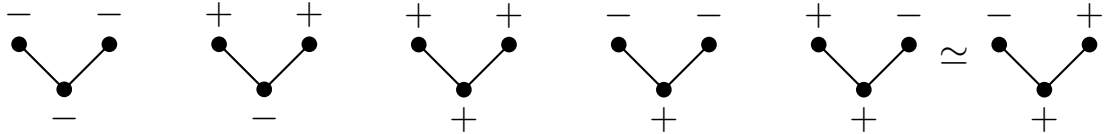

    \centering

    \caption{Above are the labeled trees corresponding to the irreducible representations of the dihedral group $D_4$, with the labeling as in~\cite{OOR}. Representations corresponding to these trees, going from left to right, are $V_{--},V_-,V_+,V_{-+},V$, respectively. The rightmost two labeled trees are isomorphic via the swap at the root of the tree.}
    \label{fig4_39}
\end{figure}

When depth $n=2$, the five labeled trees corresponding to irreducible representations of the dihedral group  $D_4\cong G_2$ are shown in Figure~\ref{fig4_39}. 

\vspace{0.1in} 

Let us write an irreducible representation of $D_4$ in the notation of~\cite{OOR} as 
$V({}^\beta {\alpha}^\gamma)$, where $\alpha, \beta, \gamma \in \{+,-\}$.
The two-dimensional irreducible representation is $V({}^+ +^-) $
which corresponds to the tree on the left in Figure~\ref{fig4_35}, 
By ~\cite{OOR}, notation $V({}^+ -^-)$ corresponding to the labeled tree on the right in Figure~\ref{fig4_35} does not correspond to any representation since there is a minus sign at the root; if there is a minus sign at the root, then the two subtrees must be identical, via a swap of the subtrees that preserves the order of lowest nodes (from left to right). If the sign at the root is $+$, then the two subtrees do not need to be identical. 
So the irreducible representations of the dihedral group $D_4$ are given in Figure~\ref{fig4_39}. 
Note that $V({}^+ +^-)\simeq V({}^- +^+)$ since the two subtrees are canonically isomorphic (via a swap at the root of the tree).


\begin{figure}
    \centering
\begin{tikzpicture}[scale=0.6]
\draw[thick,fill] (0.25,0) arc (0:360:0.25);  
\draw[thick] (0,0) -- (1,1); 
\draw[thick] (0,0) -- (-1,1);
\draw[thick,fill] (1.25,1) arc (0:360:0.25);
\draw[thick,fill] (-.75,1) arc (0:360:0.25);
\node at (0,-.75) {\Large $\alpha$};
\node at (-1.25,1.75) {\Large $\beta$};
\node at (1.25,1.75) {\Large $\gamma$};
\end{tikzpicture}
\qquad \qquad 
\begin{tikzpicture}[scale=0.6]
\draw[thick,fill] (0.25,0) arc (0:360:0.25);  
\draw[thick] (0,0) -- (1,1); 
\draw[thick] (0,0) -- (-1,1);
\draw[thick,fill] (1.25,1) arc (0:360:0.25);
\draw[thick,fill] (-.75,1) arc (0:360:0.25);
\node at (0,-.65) {\Large $-$};
\node at (-1.25,1.75) {\Large $+$};
\node at (1.25,1.65) {\Large $-$};
\end{tikzpicture}
    \caption{A labeled tree of height $1$, where $\alpha,\beta,\gamma\in\{+,-\}$. The labeled tree on the right corresponds to the notation $+^--$.}
    \label{fig4_35}
\end{figure}
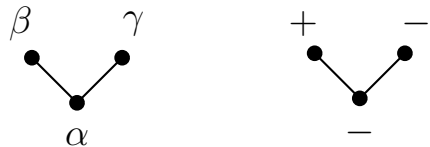







\begin{remark}
Consider the profinite limit 
\begin{equation}
    \widehat{G}=\lim_{n\to\infty} G_n. 
\end{equation}
The profinite limit has an open subgroup isomorphic to $(\widehat{G})^{\times 2^n}$,  with the quotient $G_n$, for each $n\ge 1$. 
One can then consider foams as above without any restrictions on the number of times a facet can be split into a pair of ``thinner" facets. Such foams will encode natural transformations between induction and restriction functors for suitable inclusions between direct products of groups $\widehat{G}$. 
\end{remark}


%
%

\section{Patched surfaces, separable extensions, and foams}  \label{galois_extensions}

\subsection{Defect circles and Frobenius algebra automorphisms}

\subsubsection
{Commutative Frobenius algebras and 2D TQFTs.}
A commutative Frobenius algebra $A$ over a field $\kk$ is a commutative $\kk$-algebra together with a nondegenerate linear functional (trace map) $\varepsilon: A\lra \kk$. Algebra $A$ is necessarily finite dimensional.
Such algebra gives rise to a two-dimensional TQFT, a tensor functor $\mcF$ from the category of oriented two-dimensional cobordisms to the category of $\kk$-vector spaces, see~\cite{Ab,Kc1,Kc2,LP}. This functor $\mcF$ associates $A^{\otimes k}$ to the 1-manifold which is the union of $k$ circles. To the generating morphisms \emph{cup, cap, pants, copants, and transposition}, it associates the unit, counit (trace), multiplication, comultiplication maps and transposition of factors in $A^{\otimes 2}$, respectively, see Figure~\ref{fig5_1}. 

\vspace{0.1in}

\begin{figure}[ht]
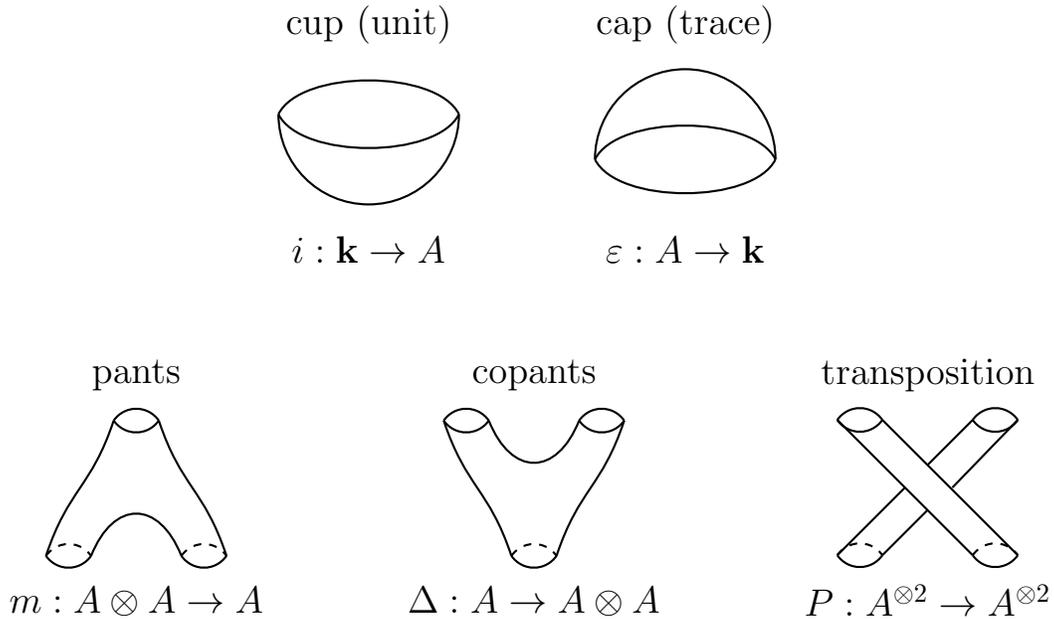

    \centering


    \caption{Generating cobordisms are taken by $\mcF$ to the structure maps of $A$: identity  $\iota: \kk \lra A$, trace $\varepsilon:A\lra \kk$, multiplication $m:A^{\otimes 2}\lra A$, comultiplication $\Delta:A\lra A^{\otimes 2}$ and the transposition of factors in the tensor product $P:A^{\otimes 2}\lra A^{\otimes 2}$.}
    \label{fig5_1}
\end{figure}

Given $A$, two-dimensional cobordisms can be refined by allowing elements of $A$, which are represented by dots, to float on surfaces. Functor $\mcF$ is extended to such cobordisms by associating  to  a tube with a dot labeled by $a\in A$ the multiplication map $m_a: A\lra A, m_a(b)=ab.$ Dots $a,b$ floating on a component may be merged into a single dot $ab$. Dots can also be called \emph{0-dimensional defects}. 

\vspace{0.1in} 

A closed surface of genus $g$ (possibly with elements of $A$ floating on it) evaluates to an element of $\kk$. One way to compute the evaluation and, more generally, simplify the topology of the cobordism (at the cost of working with linear combinations of cobordisms) is via the \emph{neck-cutting relation}. That is, pick a basis $x_1,\dots, x_n$ of $A$ and let $y_1,\dots, y_n$ be the dual basis, with $\varepsilon(x_iy_j)=\delta_{i,j}$ for $1\le i,j\le n$. Then a tube can be ``cut" to a sum of decorated cups and caps, see Figure~\ref{fig5_2}, right. 

\vspace{0.1in}

\begin{figure}
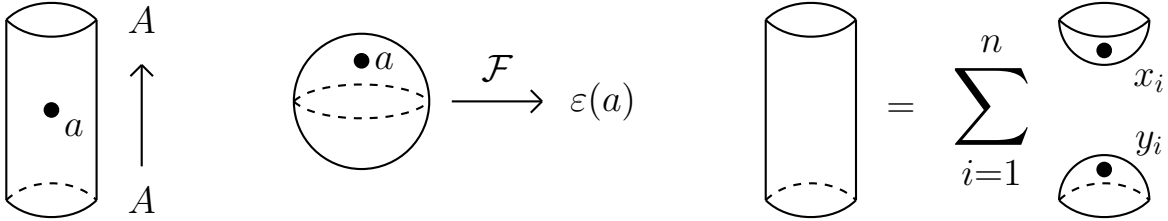

    \centering

    \caption{Left: a dot labeled $a$ on a tube goes to multiplication by $a$ map $m_a:A\lra A$. Middle: a 2-sphere dotted by $a$ evaluates to $\varepsilon(a)$. Right: the neck-cutting relation, where $\{x_i\}_i$ and $\{y_i\}_i$ are dual bases of $A$ relative to $\varepsilon$.}
    \label{fig5_2}
\end{figure}

The neck-cutting relation can be written algebraically: 
\begin{equation}
     \Id_A \ = \ \sum_{i=1}^n x_i  \varepsilon(y_i \ast), \ \ \ \ \mathrm{or} \ \ \ \  a = \sum_{i=1}^n x_i  \varepsilon(y_i a), 
     \quad  
     a\in A. 
\end{equation}
This decomposition of the identity map for a commutative Frobenius algebra can be found in \cite[Chapter 2, page 16]{KQ}. Its analogue for noncommutative Frobenius algebras has a similar form but requires cobordisms with inner boundary and corners, see~\cite[Section 3.1, Figure 3.1.8]{IK}. 


\subsubsection{Defect lines and Frobenius automorphisms}
An automorphism $\sigma$ of $A$ is called a \emph{Frobenius automorphism} or an \emph{$\varepsilon$-automorphism} if $\varepsilon\circ \sigma= \varepsilon$ as maps $A\lra\kk$. The second way of referring to $\sigma$ may be preferable to avoid possible confusion with the Frobenius  endomorphism of commutative rings in finite characteristic. The group of $\varepsilon$-automorphisms may be denoted $G(A)$ or $G(A,\varepsilon)$, to emphasize dependence on $\varepsilon$. 

\vspace{0.1in} 

Two-dimensional TQFT $\mcF$ may be further refined by adding one-dimensional defects to surfaces. These defects are co-oriented circles labeled by $\varepsilon$-automorphisms of $A$. An example is worked out in~\cite[Section 2.2]{KR2}. 

\vspace{0.1in}

Functor $\mcF$ is extended to such cobordisms with defects.  It takes a circle labeled $\sigma$ on a tube with upward coorientation to the map $\sigma: A\lra A$, see Figure~\ref{fig5_3}. Coorientation of a circle may be reversed simultaneously with replacing $\sigma$ by $\sigma^{-1}$. Given that the underlying surface is oriented, co-orientation of a circle on it induces an orientation on the circle and vice versa, so it is also possible to describe this setup via oriented rather than co-oriented circles. 

\vspace{0.1in}

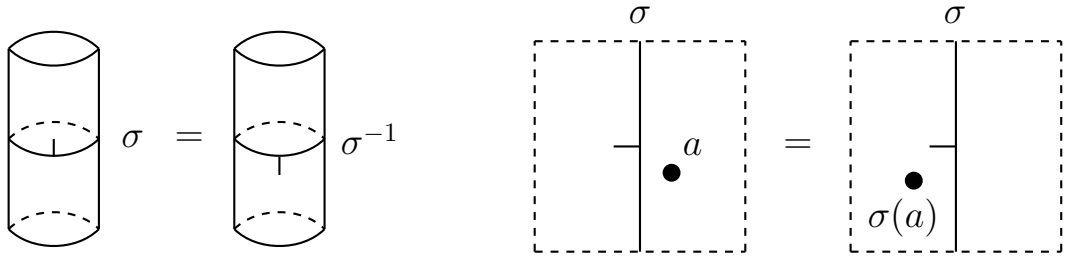
\begin{figure}
    \centering
\begin{tikzpicture}[scale=0.7]
\draw[thick,dashed] (0,0) .. controls (0.5,.5) and (1.5,.5) .. (2,0);
\draw[thick] (0,0) .. controls (0.5,-.5) and (1.5,-.5) .. (2,0);

\draw[thick,dashed] (0,2) .. controls (0.5,2.5) and (1.5,2.5) .. (2,2);
\draw[thick] (0,2) .. controls (0.5,1.5) and (1.5,1.5) .. (2,2);

\draw[thick] (0,4) .. controls (0.5,4.5) and (1.5,4.5) .. (2,4);
\draw[thick] (0,4) .. controls (0.5,3.5) and (1.5,3.5) .. (2,4);

\draw[thick] (0,0) -- (0,4);
\draw[thick] (2,0) -- (2,4);

\draw[thick] (1,1.6) -- (1,2.0);

\node at (2.6,2) {\Large $\sigma$};

\node at (4,2) {\Large $=$};

\draw[thick,dashed] (5,0) .. controls (5.5,.5) and (6.5,.5) .. (7,0);
\draw[thick] (5,0) .. controls (5.5,-.5) and (6.5,-.5) .. (7,0);

\draw[thick,dashed] (5,2) .. controls (5.5,2.5) and (6.5,2.5) .. (7,2);
\draw[thick] (5,2) .. controls (5.5,1.5) and (6.5,1.5) .. (7,2);

\draw[thick] (5,4) .. controls (5.5,4.5) and (6.5,4.5) .. (7,4);
\draw[thick] (5,4) .. controls (5.5,3.5) and (6.5,3.5) .. (7,4);

\draw[thick] (5,0) -- (5,4);
\draw[thick] (7,0) -- (7,4);

\draw[thick] (6,1.6) -- (6,1.2);
\node at (8,2) {\Large $\sigma^{-1}$}; 

\begin{scope}[shift={(11,0)}]

\draw[thick,dashed] (0,0) -- (4,0); 
\draw[thick,dashed] (0,0) -- (0,4); 
\draw[thick,dashed] (4,0) -- (4,4); 
\draw[thick,dashed] (0,4) -- (4,4); 
\draw[thick] (2,0) -- (2,4);
\node at (2,4.5) {\Large $\sigma$};

\draw[thick,fill] (3.00,1.5) arc (0:360:0.25);
\node at (3.25,2) {\Large $a$};

\draw[thick] (1.5,2) -- (2,2);
\node at (5,2) {\Large $=$};

\draw[thick,dashed] (6,0) -- (10,0); 
\draw[thick,dashed] (6,0) -- (6,4); 
\draw[thick,dashed] (10,0) -- (10,4); 
\draw[thick,dashed] (6,4) -- (10,4); 
\draw[thick] (8,0) -- (8,4);
\node at (8,4.5) {\Large $\sigma$};

\draw[thick,fill] (7.25,1.5) arc (0:360:0.25);

\node at (7,0.65) {\Large $\sigma(a)$};
\draw[thick] (7.5,2) -- (8,2);
\end{scope}
\end{tikzpicture}
    \caption{Dot crossing a defect circle. If crossing in the opposite direction, $b$ will become $\sigma^{-1}(b)$.}
    \label{fig5_3}
\end{figure}

A dot labeled $a$ may cross over a defect line $\sigma$ simultaneously with changing its label to $\sigma(a)$, see Figure~\ref{fig5_3}. An innermost circle around a dot $a$ reduces to the dot $\sigma^{\pm 1}(a)$ depending on its coorientation, see Figure~\ref{fig5_4}. An innermost circle not containing any dots can be removed, since $\sigma(1)=1$, see Figure~\ref{fig5_4}.

\vspace{0.1in}

\begin{figure}
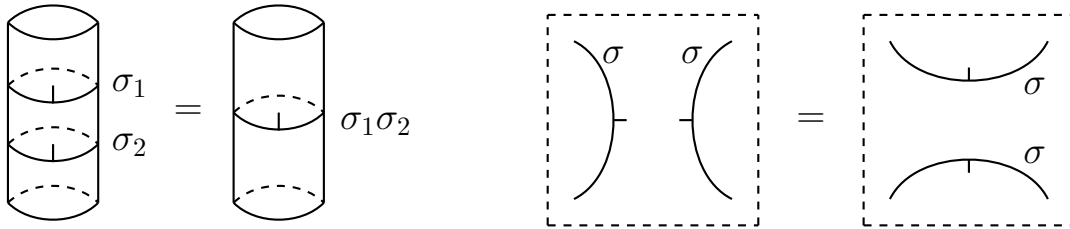

    \centering

    \caption{Left: merging two parallel circles into one. Right: Merging and splitting circles with the same label.}
    \label{fig5_5}
\end{figure}

Suppose we are given an oriented closed surface $S$ with $A$-labeled dots and $G(A,\varepsilon)$-labeled defect circles. Evaluation $\mcF(S)\in \kk$ is multiplicative under disjoint union of surfaces so we can assume $S$ is connected. To evaluate $S$, we do two surgeries (neck-cutting) on each side of each defect line in $S$ to reduce $S$ to a linear combination of products of dotted spheres with a single defect line and dotted surfaces, see Figure~\ref{fig5_6} and Figure~\ref{fig5_7}. 
Each connected component of genus $g>0$ can be further simplified via neck-cutting into a linear combination of dotted 2-spheres, see Figure~\ref{fig5_7} right. 

\vspace{0.1in}

\begin{figure}
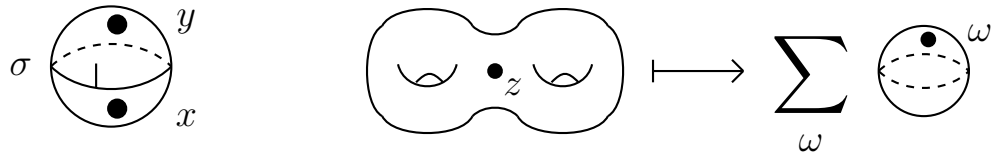

    \centering

    \caption{Left: a 2-sphere with a single defect circle and two dots. Right: reducing higher genus components via neck-cutting. 
    }
    \label{fig5_7}
\end{figure}

To evaluate a sphere with a $\sigma$-defect and dots $x,y$  on it, as in Figure~\ref{fig5_8} center, we can push one of the dots across $\sigma$-circle into the region with the other dot, remove the circle (since it now circles an empty region), multiply the dots and apply the trace, see Figure~\ref{fig5_8}. Since $\sigma$ respects $\varepsilon$, the two ways  of doing it result in the same answer. 
 
 \vspace{0.1in}

\begin{figure}
    \centering
\begin{tikzpicture}[scale=0.8]

\begin{scope}[shift={(-1,0)}]
\draw[thick] (6.25,3) arc (0:360:1.25);
\draw[thick,dashed] (3.75,3) .. controls (4.5,3.5) and (5.5,3.5) .. (6.25,3);
\draw[thick] (3.75,3) .. controls (4.5,2.5) and (5.5,2.5) .. (6.25,3);

\draw[thick, fill] (4.75,2.25) arc (0:360:0.25);
\draw[thick, fill] (5.75,2.25) arc (0:360:0.25);

\node at (2.9,2) {\Large $\sigma^{-1}(y)$};

\node at (6.35,2) {\Large $x$};

\draw[thick,|->] (5,1.25) -- (5,-0.75);

\node at (5.5,0.25) {\Large $\mathcal{F}$};
\node at (5,-1.5) {\Large $\varepsilon(\sigma^{-1}(y)x)$};
\end{scope}

\draw[thick,<-|] (6.5,3) -- (8.5,3);

\begin{scope}[shift={(0,0)}]


\draw[thick] (12.25,3) arc (0:360:1.25);

\draw[thick,dashed] (9.75,3) .. controls (10.5,3.5) and (11.5,3.5) .. (12.25,3);
\draw[thick] (9.75,3) .. controls (10.5,2.5) and (11.5,2.5) .. (12.25,3);
\draw[thick, fill] (11.25,3.85) arc (0:360:0.25);
\node at (12.35,3.95) {\Large $y$};
\draw[thick, fill] (11.25,2.2) arc (0:360:0.25);
\node at (12.35,2.05) {\Large $x$};
\draw[thick] (10.75,2.65) -- (10.75,3.05);
\node at (9.4,3.10) {\Large $\sigma$};
\end{scope}

\draw[thick,|->] (13.5,3) -- (15.5,3);

\begin{scope}[shift={(1,0)}]


\draw[thick] (18.25,3) arc (0:360:1.25);

\draw[thick,dashed] (15.75,3) .. controls (16.5,3.5) and (17.5,3.5) .. (18.25,3);
\draw[thick] (15.75,3) .. controls (16.5,2.5) and (17.5,2.5) .. (18.25,3);

\draw[thick, fill] (16.75,3.70) arc (0:360:0.25);
\node at (15.75,4) {\Large $y$};

\draw[thick, fill] (17.75,3.70) arc (0:360:0.25);
\node at (18.78,4) {\Large $\sigma(x)$};

\draw[thick,|->] (17,1.25) -- (17,-0.75);
 
\node at (17.5,0.25) {\Large $\mathcal{F}$};
\node at (17,-1.5) {\Large $\varepsilon(y\sigma(x))$};
\end{scope}

\end{tikzpicture}
    \caption{Two ways to evaluate a sphere with a $\sigma$-defect circle give the same answer since $\sigma$ is an $\varepsilon$-automorphism, $\varepsilon(\sigma^{-1}(y)x)=\varepsilon(y\sigma(x))$. }
    \label{fig5_8}
\end{figure}
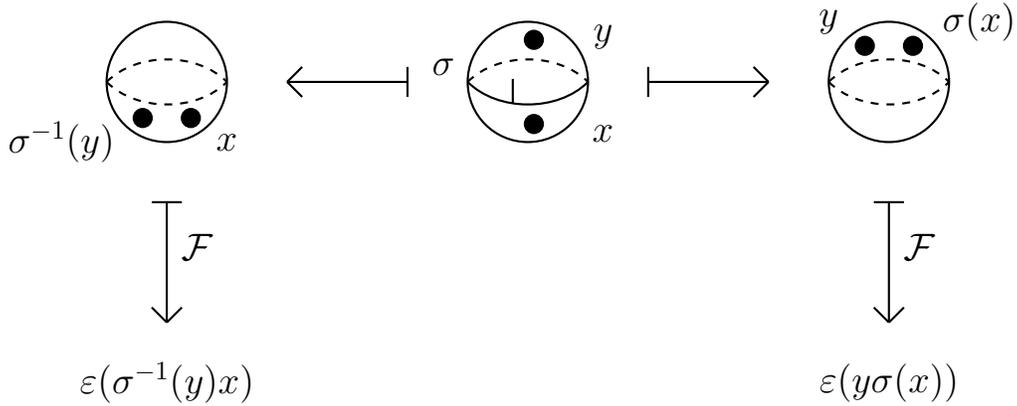

We record this as a proposition and denote the resulting evaluation of a decorated surface $S$ as $\brak{F}$ of $\mcF(S)$.  

\begin{prop}
  A closed oriented surface $S$ with floating $A$-dots and co-oriented disjoint $\sigma$-circles for $\sigma\in G(A,\varepsilon)$ has a well-defined evaluation $\mcF(S)$. 
\end{prop}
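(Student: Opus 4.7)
The plan is to extend the standard 2D TQFT well-definedness to the defect setting by checking that all local moves used in the reduction are compatible, and then invoking the classification of decorated closed surfaces up to these moves. First I would set up the category $\mathsf{DCob}_A$ of oriented 2-cobordisms decorated with floating $A$-dots and disjoint co-oriented $G(A,\varepsilon)$-labelled circles, modulo the local relations already depicted: dot multiplication, the $\mcF$-image of the Frobenius relations of $A$, crossing a dot through a $\sigma$-circle via $a \mapsto \sigma(a)$ (Figure~\ref{fig5_3}), reducing an innermost $\sigma$-circle containing a single dot to that dot acted on by $\sigma^{\pm 1}$ (Figure~\ref{fig5_4}), removing an innermost empty $\sigma$-circle using $\sigma(1)=1$, and merging parallel co-oriented circles via $(\sigma_1,\sigma_2)\mapsto \sigma_1\sigma_2$ (Figure~\ref{fig5_5}). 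The target of the would-be functor $\mcF$ is $\kk$ on the empty 1-manifold, so on closed $S$ I only need a number.

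Next I would give an algorithm for evaluating $S$ and prove it terminates in a unique element of $\kk$. The algorithm is the one sketched in Figures~\ref{fig5_6}--\ref{fig5_8}: for each defect circle $C\subset S$, perform neck-cutting on a tubular neighborhood of $C$ on both sides, separating $C$ onto a 2-sphere carrying only the circle $C$ and two dots $x_i,y_j$; what remains is a closed dotted surface of the ordinary 2D TQFT type, which can then be evaluated by the classical Abrams/Kock theorem. A 2-sphere with one $\sigma$-defect and dots $x,y$ (Figure~\ref{fig5_8}) is declared to evaluate to $\varepsilon(y\,\sigma(x))$; the identity $\varepsilon(y\,\sigma(x))=\varepsilon(\sigma^{-1}(y)\,x)$, which is precisely the $\varepsilon$-automorphism property of $\sigma$, ensures this is symmetric between the two sides of $C$.

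Then I would verify invariance under the choices in the algorithm. There are three sources of ambiguity: (i) the choice of neck-cutting circles and dual basis $\{x_i\},\{y_i\}$; (ii) the order in which defect circles and higher-genus handles are reduced; (iii) the choice of how to slide floating dots relative to defect circles before cutting. For (i), the Frobenius-algebra neck-cutting is basis-independent because the element $\sum_i x_i\otimes y_i\in A\otimes A$ is canonical (the image of $1$ under the coproduct). For (iii), the dot-crossing rule and Figures~\ref{fig5_4}--\ref{fig5_5} rules are exactly those needed to show that, up to the cut-out sphere evaluations $\varepsilon(y\sigma(x))$, the result does not depend on which chamber (between which circles) the dots live in. For (ii), one observes that two different sequences of cuts can be related by finitely many elementary moves: sliding a cut past another cut, sliding a cut past a defect circle (using the fact that $\sigma$ is an algebra map, so $\sigma(\sum x_i\otimes y_i)=\sum x_i\otimes y_i$ after a corresponding change of basis), and sliding a cut past a dot. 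Each such move preserves the total evaluation by a direct calculation in $A$ plus the $\varepsilon$-equivariance of $\sigma$.

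The main obstacle I expect is step (ii), namely verifying that sliding a neck-cut past a $\sigma$-defect circle does not change the answer. The calculation reduces to showing $\sum_i \sigma(x_i)\otimes\sigma(y_i)=\sum_i x_i\otimes y_i$ in $A\otimes A$, which follows because $\sigma$ preserves both the multiplication and $\varepsilon$, hence preserves the Casimir. Once this locality lemma is established, a standard normal-form argument (reduce $S$ component by component to disjoint unions of defect spheres and dotted spheres, then evaluate) shows that $\mcF(S)\in\kk$ is well-defined, completing the proof.
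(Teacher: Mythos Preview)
Your proposal is correct and follows the same approach as the paper: reduce $S$ by neck-cutting on both sides of each defect circle to a linear combination of dotted surfaces without defects plus spheres with a single $\sigma$-defect, then evaluate the latter via $\varepsilon(y\,\sigma(x))=\varepsilon(\sigma^{-1}(y)\,x)$. The paper treats the proposition as simply recording this procedure, with the only consistency check made explicit being the two-sided sphere evaluation (its Figure~5.8); your version is more thorough in that you spell out the additional invariance checks (basis-independence of the Casimir, order of cuts, the identity $\sum_i \sigma(x_i)\otimes\sigma(y_i)=\sum_i x_i\otimes y_i$), which the paper leaves to the reader.
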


\proof
The evaluation of $S$ is outlined above, via 
surgeries on both sides of each $\sigma$-circle, and then evaluating surfaces decorated by dots and 2-spheres decorated by a $\sigma$-circle and $x,y$, as in Figure~\ref{fig5_7}. The only invariance to check, modulo commutative Frobenius algebra axioms, is that for pushing a dot labeled $y$ across a $\sigma$-circle, which is done in Figure~\ref{fig5_8}. 
\endproof
 
 \begin{example}
 \label{ex:dotless-2-torus-non-contractible-sigma-defect-circle}
 A dotless 2-torus $T$ with a non-contractible $\sigma$-defect circle evaluates to the trace of $\sigma$ on $A$, see Figure~\ref{fig5_9}.
For example, $\tr(\sigma)=\lambda+2+\lambda^{-1}$ for the automorphism $\sigma$ on $A$ given by \eqref{eq_A_sigma} and  \eqref{eq_var_eps1} below since $1,a,b,ab$ have eigenvalues $1, \lambda,\lambda^{-1}, 1$, respectively. 
 \end{example}

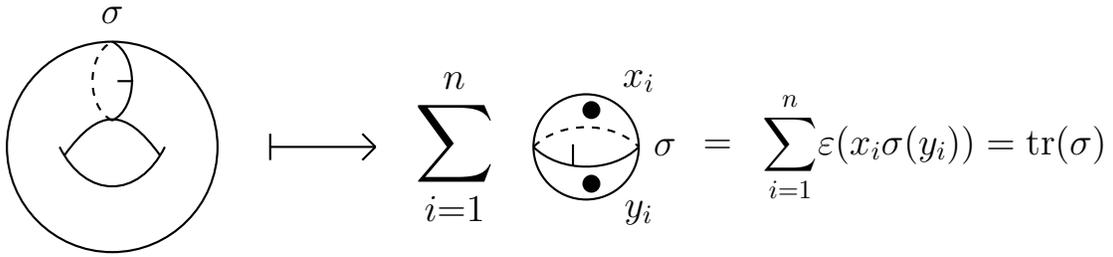
\begin{figure}
    \centering
\begin{tikzpicture}[scale=0.7]
\draw[thick] (4,0) arc (0:360:2); 
\draw[thick] (1,0) .. controls (1.5,-1) and (2.5,-1) .. (3,0);
\draw[thick] (1.1,-.15) .. controls (1.75,.75) and (2.25,.75) .. (2.9,-.15);

\node at (2,2.5) {\Large $\sigma$};
\draw[thick] (2,.5) .. controls (2.5,.75) and (2.5,1.75) .. (2,2);
\draw[thick,dashed] (2,.5) .. controls (1.5,.75) and (1.5,1.75) .. (2,2);
\draw[thick] (2.1,1.25) -- (2.4,1.25);

\draw[thick,|->] (5,0) -- (7,0);

\node at (8.5,0) {\huge $\displaystyle{\sum_{i=1}^n}$};


\draw[thick] (12.25,0) arc (0:360:1.25);
\draw[thick,dashed] (9.75,0) .. controls (10.5,0.5) and (11.5,0.5) .. (12.25,0);
\draw[thick] (9.75,0) .. controls (10.5,-.5) and (11.5,-.5) .. (12.25,0);

\draw[thick, fill] (11.25, 0.80) arc (0:360:0.25);
\node at (12.2,1.25) {\Large $x_i$};
\draw[thick, fill] (11.25,-0.80) arc (0:360:0.25);
\node at (12.2,-1.25) {\Large $y_i$};

\draw[thick] (10.75,-.35) -- (10.75,0.05);

\node at (12.75,0) {\Large $\sigma$};

\node at (13.75,0) {\Large $=$};

\node at (18,0) {\Large
$\ \displaystyle{\sum_{i=1}^{n}}\ \varepsilon(x_i\sigma(y_i))=\tr(\sigma)$};

\end{tikzpicture}
    \caption{Torus with an essential $\sigma$-defect circle evaluates to $\tr(\sigma)$. Note that $\tr(\sigma)=\tr(\sigma^{-1})$ in view of Corollary~\ref{cor_nondeg}.   }
    \label{fig5_9}
\end{figure}

More generally, given a decorated cobordism $C$ between one-manifolds, neck-cutting and consequent evaluation reduces its image under $\mcF$ to a linear combination of dotted cup and cap cobordisms, see Figure~\ref{fig5_10}, where we assume that one-manifolds are not decorated. Such a decorated cobordism between unions of circles induces a linear map $A^{\otimes k_0}\lra A^{\otimes k_1}$, where $k_0,k_1$ is the number of bottom and top boundary circles of $C$. In this way, the original two-dimensional TQFT associated to $(A,\varepsilon)$ allows an extension with these decorations. This TQFT associated $A^{\otimes k}$ to a union of $k$ undecorated circles.

\begin{figure}
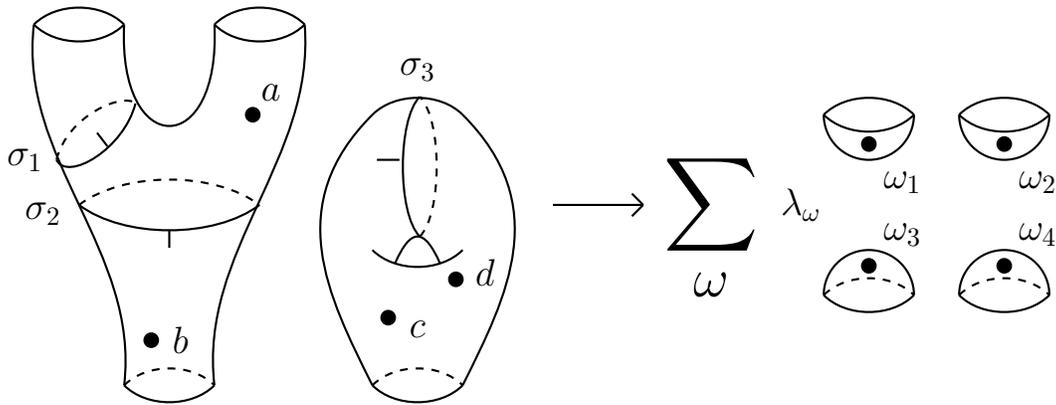

\begin{center}

\end{center}
    \caption{Reducing a decorated cobordism to a linear 
    combination of cups and caps, by neck-cutting near each boundary circle and evaluating closed components. }
    \label{fig5_10}
\end{figure}

\subsubsection{State spaces of decorated 1-manifolds}\label{subsubsec_state}

One can  use the language of universal constructions, see~\cite{Kh4,BHMV} and references there, to extend the evaluation $\mcF(S)$ for closed decorated surfaces $S$ to state spaces of one-manifolds that inherit decorations from surfaces. Namely, a generic codimension one submanifold of $S$ may intersect $\sigma$-circles in finitely-many points. Local intersection information at such point consists  of a co-orientation and choice of $\sigma$. 

\vspace{0.1in} 

Vice versa, to a union $L$ of circles with co-oriented $\sigma$-dots, one can assign the state space $\mcF(L)$ as follows. Start with a $\kk$-vector space $\Fr(L)$ with a basis of oriented decorated surfaces $S$ with $\partial(S)\cong L$, one for each equivalence class of rel boundary homeomorphisms, see Figure~\ref{fig5_11}. These surfaces contain co-oriented $\sigma$-intervals, $\sigma$-circles and $A$-dots. Denote by $[S]$ the basis element for the surface $S$. 

\vspace{0.1in}

\begin{figure}[ht]
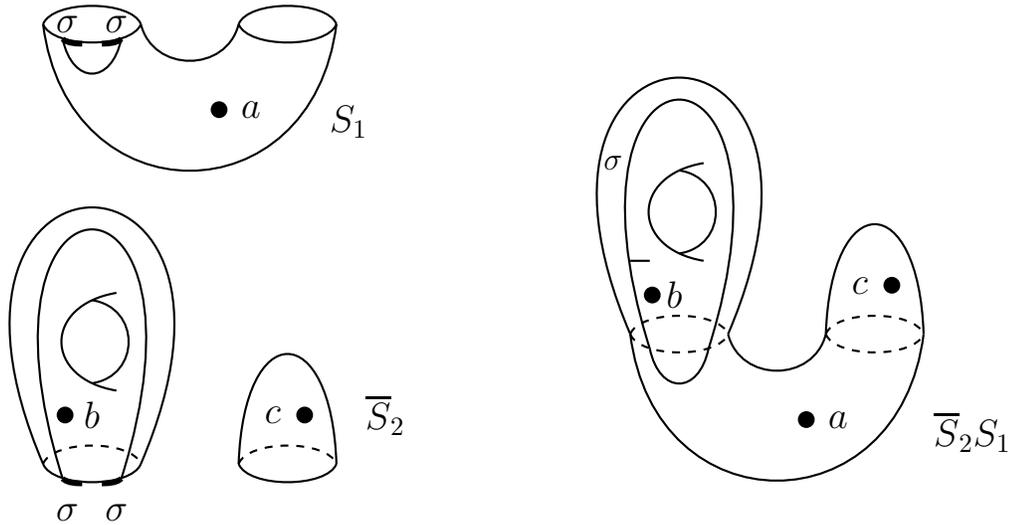

\begin{center}

    \caption{Gluing surfaces $S_1$ and $S_2$ along the common boundary.  }
    \label{fig5_12}
\end{figure}

Define a bilinear form $(\:\:,\:\:)$ on $\Fr(L)$ by 
\begin{equation}
    ([S_1],[S_2]) \ =  \ \mcF(\overline{S}_2 S_1).
\end{equation}
This bilinear form is symmetric. Define the state space of $L$ as the quotient of $\Fr(L)$ by the kernel of this bilinear form: 
\begin{equation}
    \mcF(L) \ := \ \Fr(L)/\mathrm{ker}((\:\:,\:\:)). 
\end{equation}
This is an example of universal construction of topological theories~\cite{Kh4,BHMV}, and this construction strategy has been applied in many different situations. Any decorated oriented two-dimensional cobordism $S$ induces a map 
\begin{equation}
    \mcF(S)\ : \ \mcF(\partial_0 S) \lra \mcF(\partial_1 S)
\end{equation}
given by composing a cobordism representing an element in $\mcF(\partial_0 S)$ with $S$. 

\vspace{0.1in}

In general, the state spaces $\mcF(L)$ are not multiplicative under disjoint union, and there are only inclusions
\begin{equation}
    \mcF(L_1)\otimes \mcF(L_2) \subset \mcF(L_1\sqcup L_2).
\end{equation}
Due to the neck-cutting formula, which can be applied only if the cutting circle is disjoint from $\sigma$-circles,  there is a restrictive case of multiplicativity, 
\begin{equation}
    \mcF(L\sqcup\SS^1) \cong \mcF(L)\otimes \mcF(\SS^1),
\end{equation}
where $\SS^1$ denotes an undecorated circle. State spaces $\mcF(L)$ are trivial for many decorated one-manifolds $L$, for instance if the endpoint labels $\sigma$ cannot be matched in pairs, keeping track of co-orientations. Here, it is convenient to at least allow co-orientation reversal together with changing $\sigma$ to $\sigma^{-1}$. Such a reversal (or flip) may happen anywhere along a defect circle or line. Along a defect circle, the total number of reversals must be even, so that locally  along a circle there is a well-defined co-orientation, with flips along reversal points, see Figure~\ref{fig5_13}.

\vspace{0.1in}

\begin{figure}
    \centering
\begin{tikzpicture}[scale=0.70]


\draw[thick,dashed] (0,0) -- (4,0);
\draw[thick,dashed] (0,5) -- (4,5);
\draw[thick] (2,0) -- (2,5);

\draw[thick,fill] (2.25,2.5) arc (0:360:0.25);

\draw[thick] (2,4.375) -- (2.5,4.375);
\draw[thick] (2,3.75) -- (2.5,3.75);
\draw[thick] (2,3.125) -- (2.5,3.125);

\draw[thick] (2,1.875) -- (1.5,1.875);
\draw[thick] (2,1.25) -- (1.5,1.25);
\draw[thick] (2,0.625) -- (1.5,0.625);

\node at (2.40,-0.5) {\Large $\sigma^{-1}$};

\node at (2,5.5) {\Large $\sigma$};

\begin{scope}[shift={(6.5,4.5)}]
\draw[thick] (0,0) .. controls (0.25,.6) and (1.75,.6) .. (2,0);
\draw[thick] (0,0) .. controls (0.25,-.5) and (1.75,-.5) .. (2,0);

\draw[line width=1.00mm] (1,-.4) .. controls (1.1,-.4) and (1.4,-.35) .. (1.7,-.25);

\draw[line width=1.00mm] (5,-.4) .. controls (5.1,-.4) and (5.4,-.35) .. (5.7,-.25);

\draw[thick] (1.9,-3.1) -- (2.3,-2.8);
\draw[thick] (4.2,-3) -- (4.55,-3.3);

\draw[thick,fill] (3.25,-3.65) arc (0:360:0.25);

\draw[thick] (4,0) .. controls (4.25,.6) and (5.75,.6) .. (6,0);
\draw[thick] (4,0) .. controls (4.25,-.5) and (5.75,-.5) .. (6,0);

\draw[thick] (0,0) .. controls (0.25,-6) and (5.75,-6) .. (6,0);
\draw[thick] (1,-.4) .. controls (1.25,-4.75) and (4.75,-4.75) .. (5,-.4);
\draw[thick] (2,0) .. controls (2.25,-3.5) and (3.75,-3.5) .. (4,0);

\node at (0.65,.05) {\large $\sigma$};
\node at (4.92,.05) {\large $\sigma^{-1}$};

\draw[thick,fill] (1.25,-.35) arc (0:360:0.25);
\draw[thick,fill] (5.25,-.35) arc (0:360:0.25);

\end{scope}

\begin{scope}[shift={(18,2.5)}]

\draw[thick,dashed] (1,0) arc (0:360:1);
\draw[thick] (2,0) arc (0:360:2);
\draw[thick,dashed] (3,0) arc (0:360:3);

\draw[thick,fill] (2.25,0) arc (0:360:0.25);
\draw[thick,fill] (-1.75,0) arc (0:360:0.25);

\draw[thick] (0,2) -- (0,2.5);
\draw[thick] (0,-1.5) -- (0,-2);

\node at (1.65,1.75) {\Large $\sigma$};
\node at (.85,-1.20) {\Large $\sigma^{-1}$};

\end{scope}
\end{tikzpicture}
    \caption{Left and center: co-orientation and $\sigma \leftrightarrow \sigma^{-1}$ flip along a seam. Right: There is an even number of flips along a $\sigma$-circle, even if $\sigma=\sigma^{-1}$.}
    \label{fig5_13}
\end{figure}
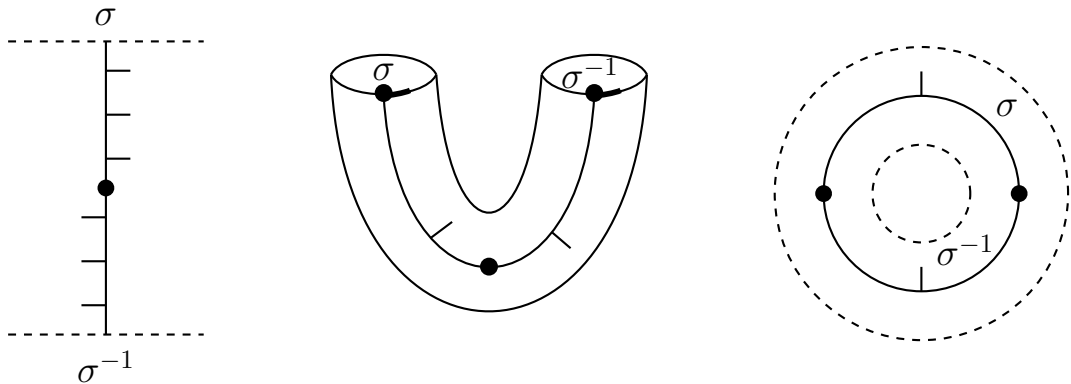

In general, we do not know much about the state spaces $\mcF(L)$ for collections of decorated circles as above. Furthermore, it would be natural to look for extensions of these theories to networks, where lines labeled $\sigma$ and $\tau$ can merge into a line labeled $\sigma\tau$, as we  now explain. 

\subsubsection{Turaev's homotopy TQFTs and universal theories.}
\label{subsubsec_turaev}
One can think of a $\sigma$-circle on $S$ as describing a sort of monodromy. Choose a topological space $X$ with a base point $x_0$ such that $\pi_1(X,x_0)\cong G=G(A,\varepsilon)$ and $\pi_2(X,x_0)=0$. To a $\sigma$-surface $S$, associate a homotopy class of maps $S\lra X$ as follows. $A$-dots floating on $S$ are ignored. Points away from neighborhoods of $\sigma$-circles are mapped to the basepoint $x_0$. An interval transverse to a $\sigma$-circle is mapped to the loop at $x_0$ representing element $\sigma\in\pi_1(X,x_0)$, using the co-orientation to choose between a map representing $\sigma$ or $\sigma^{-1}$, see Figure~\ref{fig5_14}.

\vspace{0.1in}

\begin{figure}[ht]
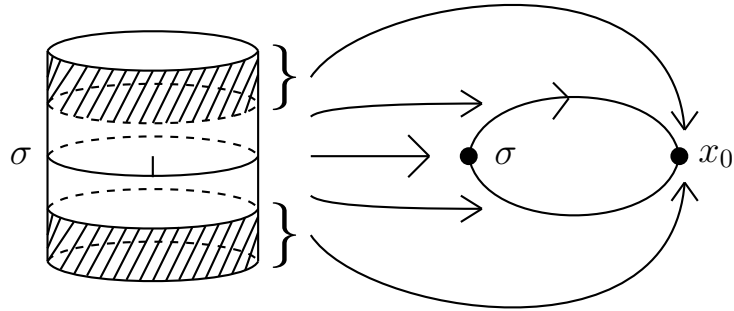

    \centering

    \caption{Map into $X$ near a seam circle of $S$.  }
    \label{fig5_14}
\end{figure}

In this way, our construction is reminiscent of Turaev's homotopy TQFTs in dimension two~\cite{Tu1,Tu2,MS}, Landau--Ginzburg (LG) orbifolds \cite{IV,BH,BR,LS,KW} and orbifolded Frobenius algebras 
\cite{Ka}. In the setting of Landau--Ginzburg models, monodromy transformation refers to a deformation of LG orbifolds via BPS spectrum (stable particles) or other (geometric) invariants as one moves around the moduli construction in order to understand their mirror symmetry.

\vspace{0.1in}

Furthermore, consider a circle $L$ with 3 defect points, co-oriented in the same direction, such that their labels multiply to $1\in G$, see Figure~\ref{fig5_15}.
In the $\sigma$-circles setup, this decorated circle cannot bound a decorated surface, so its space is zero. 

\vspace{0.1in}

\begin{figure}
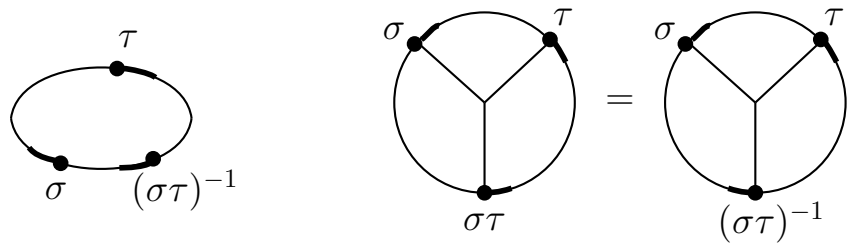

    \centering

    \caption{``Monodromy" along the circle on the left is trivial and motivates the introduction of a trivalent vertex.}
    \label{fig5_15}
\end{figure}

However, the ``monodromies" along the circle multiply to the trivial element of $G$, and it is natural to introduce a trivalent vertex, as shown in Figure~\ref{fig5_15}. 

\vspace{0.1in} 

Seam lines and trivalent vertices can be arranged into ``networks" on a surface $S$. It is natural to require that moves shown in Figure~\ref{fig5_16a} and Figure~\ref{fig5_16b} should preserve the evaluation of the network. 

\vspace{0.1in}

\begin{figure}
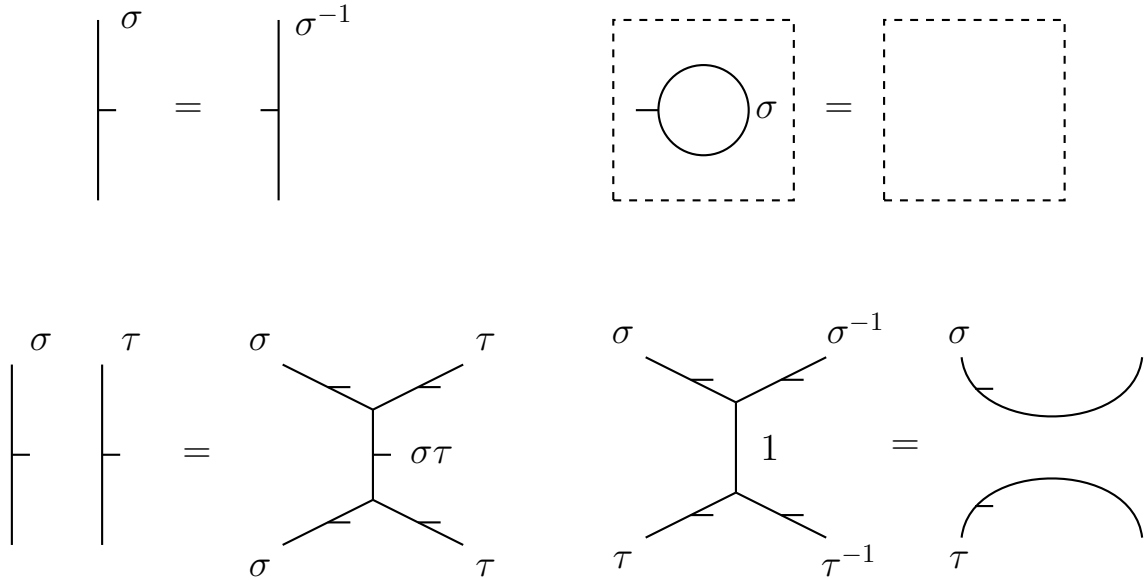

    \centering


    \caption{Some skein relations for $G$-networks. Either co-orientation is fine for top right relation. Bottom right relation says that intervals labeled $1$ may be erased. This relation can be clarified by  removing the interval on the left labeled $1$ and keeping the flip points on the two arcs that reverse co-orientation and send $\sigma$ to $\sigma^{-1}$, as in  Figure~\ref{fig5_13}. Same refinement can be applied to the top left relation. For careful treatment of flip points one should also choose types of allowed triples of coorientations allowed at networks' vertices and add suitable relations, see Figures~\ref{fig5_16c}, \ref{fig5_16cd} and~\ref{fig5_16d}.  }
    \label{fig5_16b}
\end{figure}

\begin{figure}[ht]
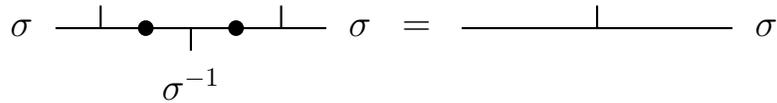

    \centering

    \caption{Canceling a pair of adjacent flip points on a seam.   }
    \label{fig5_16d}
\end{figure}

One motivation for these moves is that a representation of $\pi_1(S)$ into $G$, up to conjugation in $\pi_1(S)$, gives rise to an equivalence class of networks. Denote the set of 
equivalence classes by 
\begin{equation}
    \pi(S,G) \ := \Hom(\pi_1(S),G)/\pi_1(S), \quad   
    s(\rho)(t) = \rho(s^{-1}t s), \  \ s,t\in \pi_1(S), \ \rho \in \Hom(\pi_1(S),G). 
\end{equation}
To construct a network representing 
$\rho: \pi_1(S)\lra G$, viewed as an element of $\pi(S,G)$, decompose  a connected surface $S$ in the usual way as given by gluing a $4g$-gon along the sides. The sides represent generators $a_i,b_i$ of $\pi_1(S)$. Draw an interval crossing each the side, place the label $\rho(a_i), \rho(b_i)\in G$ on it, and suitably co-orient  the interval as well. Inside the $4g$-gon these $4g$ intervals naturally extend to a connected network, uniquely defined, since the relation 
\begin{equation}
    \prod_{i=1}^n \rho(a_i)\rho(b_i)\rho(a_i)^{-1}\rho(b_i)^{-1}=1 
\end{equation}
holds in $G$. The network has $4g-2$ trivalent vertices and its complement in $S$ is an open disk (if some edges of the network are labeled $1\in G$, they can then be erased, making the complement not simply connected). An example for $g=1$ is shown in Figure~\ref{fig5_17}, when necessarily $\rho(a),\rho(b)$ commute. Since the fundamental group of the two-torus is abelian, there is only one element in the conjugacy class of a homomorphism. Note that we are not conjugating by elements of $G$, only  by elements of $\pi_1(S)$ (equivalently, by inner automorphisms of the latter group). 

\vspace{0.1in}

\begin{figure}
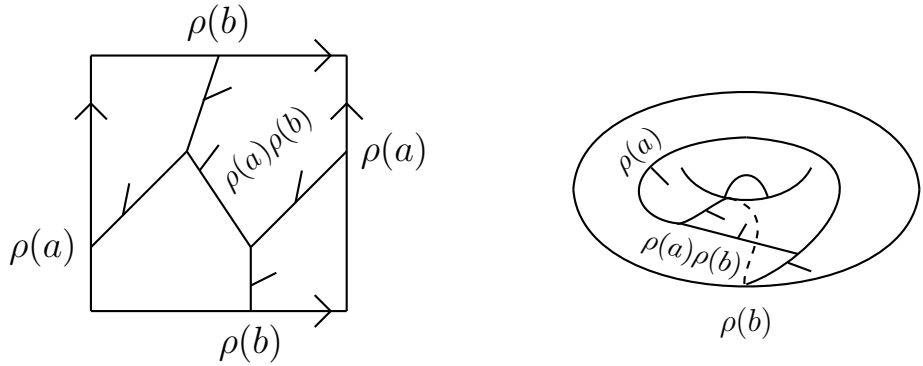

    \centering

    \caption{A network on the torus describing a homomorphism $\pi_1(T^2)\lra G$. Commutativity $\rho(a)\rho(b)=\rho(b)\rho(a)$ is needed for both trivalent vertices to make sense.   }
    \label{fig5_17}
\end{figure}

Vice versa, to a $G$-network $w$ on $S$ we can assign an element of $\pi(S,G)$. Choose a $K(G,1)$ space  $X$ which is a CW-complex with a single vertex $v_0$ and 1-cells $c(g)$  in a bijection with elements of $G$ such that that the loop along $c(g)$ represents $g$ in $\pi_1(X,v_0)\cong G$. 

\vspace{0.1in} 

View network $w$ as a trivalent graph on $S$, possibly with loops, and form a standard open neighborhood $U$ of $w$. Construct a map $\phi_w: S\lra X$ as follows. All points in $S\setminus U$ map to the base points $v_0$ of $X$. Neighborhood $U$ can be partitioned into a union of intervals, each one intersecting $w$ at a single point, and triangles, one for each vertex of $w$, see Figure~\ref{fig5_15_1}. 

\vspace{0.1in}

\begin{figure}
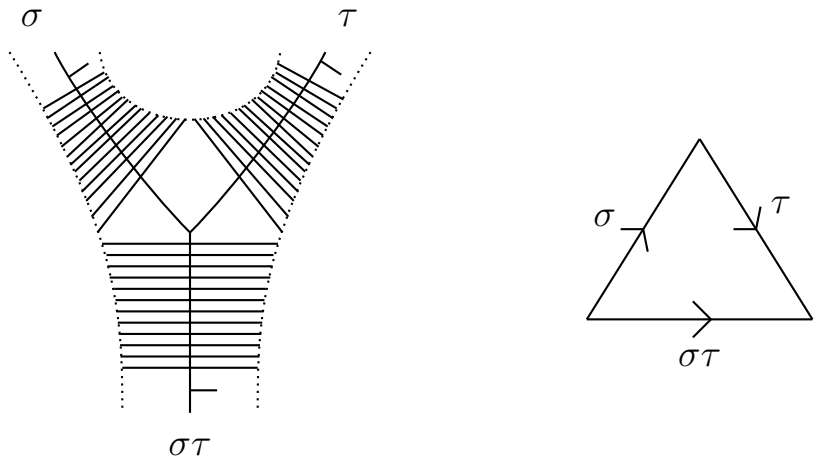

    \centering

    \caption{Left: Decomposing $U$ near a vertex of $w$ into a triangle and unions of parallel intervals, one for each leg of the tripod at the vertex. Right: the triangle is mapped to $X$ in a unique way, up to homotopy, extending the map of its boundary. }
    \label{fig5_15_1}
\end{figure}


Each interval intersecting $w$ at a point of a line labeled $\sigma$ is mapped bijectively to the 1-cell $c(\sigma)$ in the direction of co-orientation. Around each vertex of $w$ there is a triangle, with its sides mapped to $c(\sigma), c(\tau), c(\sigma\tau)$, respectively. There is a unique, up to homotopy, way to map this triangle to $X$ given the map on its sides.

\vspace{0.1in} 

Thus, to a network $w$  we assign a map $\phi_w: S\lra X$. Fixing a base point $s_0$ on $S$ away from $w$ induces a map $\pi_1(S,s_0)\lra \pi_1(X,v_0)$. Network transformations shown in Figure~\ref{fig5_16a} and~\ref{fig5_16b} away from the basepoint correspond to basepoint-preserving homotopies of maps $S\lra X$ and induce the same homomorphism of fundamental groups. Moving a base-point across a line labeled $\sigma$ conjugates the homomorphism by $\sigma.$

\begin{prop}\label{prop_hom_classes}
 The above correspondence gives a bijection between elements of $\pi(S,G)$ and isotopy classes of $G$-networks modulo relations in Figures~\ref{fig5_16a} and~\ref{fig5_16b}. 
\end{prop}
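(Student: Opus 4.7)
The plan is to realize the correspondence as the Poincaré dual version of the standard bijection between homotopy classes $[S,X]$ and conjugacy classes of homomorphisms $\pi_1(S)\to G$ for a $K(G,1)$ space $X$. Build $X$ with one $0$-cell $v_0$, one $1$-cell $c(g)$ for each $g\in G\setminus\{1\}$ with $c(g)$ representing $g\in\pi_1(X,v_0)$, a $2$-cell for each relation $g\cdot h=gh$ in $G$, and higher cells killing $\pi_k(X)$ for $k\ge 2$. Since $X$ is a $K(G,1)$, the basepoint-free mapping set $[S,X]$ is naturally identified with $\Hom(\pi_1(S),G)/G$-conjugation, and the action of $\pi_1(S)$ on $\Hom(\pi_1(S),G)$ by $s(\rho)(t)=\rho(s^{-1}ts)$ is conjugation by $\rho(s)\in G$, so its orbit set equals $\pi(S,G)$.

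First I would check that the assignment $w\mapsto \phi_w$ from the proof of the excerpt (networks $\to$ maps $S\to X$) is well defined modulo the moves in Figures~\ref{fig5_16a}--\ref{fig5_16b}. Generic isotopy of $w$ in $S$ produces a homotopy of $\phi_w$ supported in a neighborhood of the network. Each displayed local move corresponds to a cellular homotopy across a specific cell of $X$: the trivalent-vertex associativity move of Figure~\ref{fig5_16a} corresponds to sliding $\phi_w$ across the $2$-cell encoding $(\sigma\tau)\gamma=\sigma(\tau\gamma)$; the digon collapse corresponds to a homotopy across the $2$-cell for $g\cdot g^{-1}=1$; the innermost nullhomotopic loop around an empty disk corresponds to filling in the disk $D^2\to X$ whose boundary is $c(\sigma)c(\sigma)^{-1}$; the coorientation flip with $\sigma\leftrightarrow\sigma^{-1}$ and the cancellation of adjacent flip points in Figure~\ref{fig5_16d} correspond to reading a $1$-cell in the opposite direction. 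Thus $\phi_w$ is well defined in $[S,X]$, hence an element of $\pi(S,G)$.

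Second I would construct the inverse. Given $\rho\in\Hom(\pi_1(S),G)$, the $4g$-gon construction sketched before the proposition produces a network $w_\rho$; any other standard disk decomposition yields a $G$-network related to $w_\rho$ by applications of the associativity and digon moves together with isotopy, so the class $[w_\rho]$ depends only on $\rho$. If $\rho'=\rho(s)^{-1}\rho(\cdot)\rho(s)$, then transporting the basepoint of $S$ along a loop representing $s$ moves the marked vertex through edges of $w_\rho$ across the coorientation/flip relations, yielding $w_{\rho'}$. Hence the map $\rho\mapsto [w_\rho]$ descends to a map $\Psi:\pi(S,G)\to\{\text{$G$-networks}\}/\sim$.

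Third, one checks $\Phi\circ\Psi=\mathrm{id}$ and $\Psi\circ\Phi=\mathrm{id}$. For $\Phi\circ\Psi$, the map $\phi_{w_\rho}$ sends generating loops $a_i,b_i$ of $\pi_1(S)$ to the prescribed loops $c(\rho(a_i)),c(\rho(b_i))$, reproducing $\rho$ up to conjugation. For $\Psi\circ\Phi$, cellular approximation makes any map $f\colon S\to X$ transverse to midpoints of $1$-cells and standard around preimages of $2$-cell centers, and the preimage of the midpoints carries a canonical $G$-coloring and coorientation producing a network that matches $w_{\rho_f}$ up to the allowed moves. The main obstacle, and the step requiring the most care, is verifying completeness of the relations: showing that any cellular homotopy between two cellular maps $S\to X$ can be realized as a finite sequence of the moves in Figures~\ref{fig5_16a}--\ref{fig5_16d}. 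This is a standard transversality-plus-Reidemeister-style argument: one lists the generic codimension-one singularities of a one-parameter family $f_t:S\to X$ of cellular maps and matches each with one of the moves, using associativity to handle the passage through the interior of a $2$-cell, the digon/loop moves to handle births and deaths of small dual circles, and the coorientation/flip moves to handle reversals of local orientation on dual seams. Once this local-to-global verification is done, the two constructions are mutually inverse and the proposition follows.
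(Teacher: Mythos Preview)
The paper gives no proof of this proposition; it explicitly says ``Proof is left to the reader.'' So there is nothing to compare against on the paper's side, and your sketch is the natural argument one would supply.

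That said, there is a genuine issue that your write-up papers over and that the paper's statement itself does not resolve. You correctly observe that for a $K(G,1)$ space $X$ one has $[S,X]\cong \Hom(\pi_1(S),G)/G$ (quotient by \emph{target} conjugation), and you separately note that the paper's $\pi(S,G)$ is the quotient by \emph{source} conjugation $s(\rho)(t)=\rho(s^{-1}ts)$, i.e.\ by conjugation by $\rho(s)\in \mathrm{Im}(\rho)$. These two quotients are different whenever some $\rho$ fails to be surjective, and the paper is explicit that it means the latter (``we're not conjugating by elements of $G$, only by elements of $\pi_1(S)$''). But the moves in Figure~5.16b actually force the coarser equivalence: using the innermost-circle move in reverse you may insert a small $g$-circle for \emph{any} $g\in G$ and then regard the basepoint as lying inside it; the induced homomorphism is then conjugated by $g$. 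Concretely, on $T^2$ with $G=S_3$, the network with a single essential $(12)$-circle and the network with a single essential $(23)$-circle are related by inserting a contractible $(13)$-circle and reading the basepoint from inside it, yet the corresponding homomorphisms are not source-conjugate. Hence the map $\Phi$ from network classes to $\pi(S,G)$ as literally defined is not well defined; the bijection that your transversality-plus-Reidemeister argument actually establishes is with $[S,X]=\Hom(\pi_1(S),G)/G$.

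So your overall strategy is correct and your completeness sketch (generic one-parameter families of cellular maps, matching codimension-one events to the listed moves) is the right mechanism. But you should state the target as $[S,X]$ rather than $\pi(S,G)$, and note explicitly that the paper's definition of $\pi(S,G)$ needs to be read as the target-conjugation quotient for the proposition to hold; otherwise your ``any cellular homotopy can be realized by moves'' step, applied to a free homotopy whose basepoint trace represents $g\notin\mathrm{Im}(\rho)$, would contradict the claimed bijection.
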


\proof Let us sketch a proof of this proposition.
  A network as above describes a map of $S$ into the classifying space of $BG$. The latter has the standard cell decomposition with $n$-dimensional cells given by $n$-tuples of elements of $G$. A map of $S$ to $BG$ can be made simplicial, with the image of $S$ lying in the 2-skeleton $BG^2$ of $BG$, via a map $\psi:S\lra BG^2$. Take  the Poincar\'e dual $P_2$ of the cell decomposition of $BG^2$. The inverse image $\psi^{-1}(P^1_2)$ of the 1-skeleton of $P_2$ gives a network on $S$ as described above. Vice versa, any network comes from such a simplicial map $\psi:S\lra BG^2$. Two homotopic maps of $S$ to $BG$ can both be made simplicial, giving maps $\psi_1,\psi_2:S\lra BG^2$. These maps are homotopic through a simplicial map $\psi:S\times [0,1]\lra BG^3$, where now the image lies in the 3-skeleton of $BG$, for some simplicial decomposition of $S\times [0,1]$. One can now connect $S\times \{0\}$ and $S\times \{1\}$ in $S\times [0,1]$ through a collection of surfaces $S_t$, for a finite subset of $t's$ in $[0,1]$, where two consecutive surfaces $S_t, S_t'$, that come with maps to $S\times [0,1]$, differ in an elementary way, through one of the Pachner moves for triangulations of surfaces~\cite{Pa} (and one additionally keeps track of $G$-labels of all edges on surfaces). These moves can be translated to the corresponding transformations of our networks. 
\endproof


The description of representations of the fundamental group via networks is Poincar\'e dual to the one commonly used in the  literature~\cite{Tu2,MS,Ka}.

\vspace{0.1in} 


We do not expect that $G$-valued networks on a surface (equivalently, elements of $\pi(S,G)$) can be evaluated consistently given the data $(A,\varepsilon)$ of a commutative Frobenius algebra and taking $G=G(A,\varepsilon)$ the group of trace-respecting automorphisms of $A$. Clearly, one needs much more structure to have a natural evaluation. 

\vspace{0.1in}

If $G$ is fixed, there is the notion of $G$-equivariant two-dimensional TQFT and corresponding $G$-equivariant commutative Frobenius algebra, see~\cite{Tu1,Tu2,MS,Ka}, much more sophisticated than that of a commutative Frobenius algebra. These structures do allow evaluations of surfaces with $G$-networks. Additionally, they define tensor functors on the corresponding categories of two-dimensional $G$-cobordisms, thus assigning vector spaces to $G$-labeled one-manifold, in a multiplicative way (disjoint union corresponds to tensor product of vector spaces).  

\vspace{0.1in}

Universal theories approach~\cite{BHMV,Kh4,KS,KKO,KL}  provides a different way to construct a topological theory, given evaluation function on networks on closed surfaces. Fix a  group $G$. Choose an \emph{evaluation} function, that is, a map of sets 
\begin{equation}\label{eq_alpha}
    \alpha \ : \ \pi(S,G)\lra \kk. 
\end{equation}

Given a closed oriented surface $S$ with a $G$-network $w$, define $\alpha(S,w):=\alpha(\rho)$, where $\rho$ is the equivalence class of homomorphisms defined by $w$ (equivalence under \emph{source conjugations}, that is, conjugations in $\pi_1(S)$). 

\vspace{0.1in}

With evaluation $\alpha$ for closed surfaces with a $G$-network at hand, we can define state spaces $\alpha(L)$ of decorated oriented one-manifolds $L$ in Section~\ref{subsubsec_state}, with $\alpha$ in place of $\mcF$ and $G$-networks on $S$ in place of collections of $G$-circles. First interesting question is funding families of evaluations $\alpha$ such that the state spaces $\alpha(L)$ are finite-dimensional for all $L$,by analogy with a study in~\cite{Kh4} and follow-up papers. Such evaluations may be called \emph{rational} or \emph{recognizable}.  

\vspace{0.1in} 


The state spaces $\alpha(L)$ may be zero for some $G$-decorated one-manifolds no matter what $\alpha$ is. For instance if $\sigma\in G\setminus [G,G]$ is not in the commutator subgroup, the  state space of a single circle $\SS^1(\sigma)$ with a mark $\sigma$ on it is trivial, since such circle cannot bound  any $G$-network $w$ on a surface $S$ with $\partial(S,w)\cong \SS^1(\sigma)$. We leave studying these state spaces and associated categories (as in~\cite{Kh4,KS,KKO}) for another paper.   

\vspace{0.1in} 

\begin{remark}
Following Turaev's homotopy TQFT, one can consider the case of maps of surfaces into a  path-connected topological space $X$ with $\pi_2(X)\not= 0$. The group $G:=\pi_1(X,x_0)$ acts linearly on the abelian group $B:=\pi_2(X,x_0)$, i.e., see \cite[Section 8.2]{FF}, and \cite[Section 7.1.ii]{BHS} for its generalization.  
 Consider oriented closed surfaces $S$ decorated by a $G$-network together with floating dots labeled by elements of $B$ and disjoint from the graph of the $G$-network. To relations in Figures~\ref{fig5_16a}, \ref{fig5_16b} one can add the rules in Figure~\ref{fig5_20} below. 

The relation between equivalence classes of these networks and homotopy classes of maps from $S$ to $X$ is discussed in~\cite[Remark 2.27]{IK2}.  
 
Universal theories can be further considered for such pairs $(G,B)$, and we hope to treat examples elsewhere. When $G=\{1\}$ is the trivial group, the network with each edge labeled $1$ may be erased, and $S$ is decorated only by dots that are elements of an abelian group $B$. Universal theories for this case are discussed in~\cite[Section 8]{KKO}. 
\end{remark}

\begin{figure}
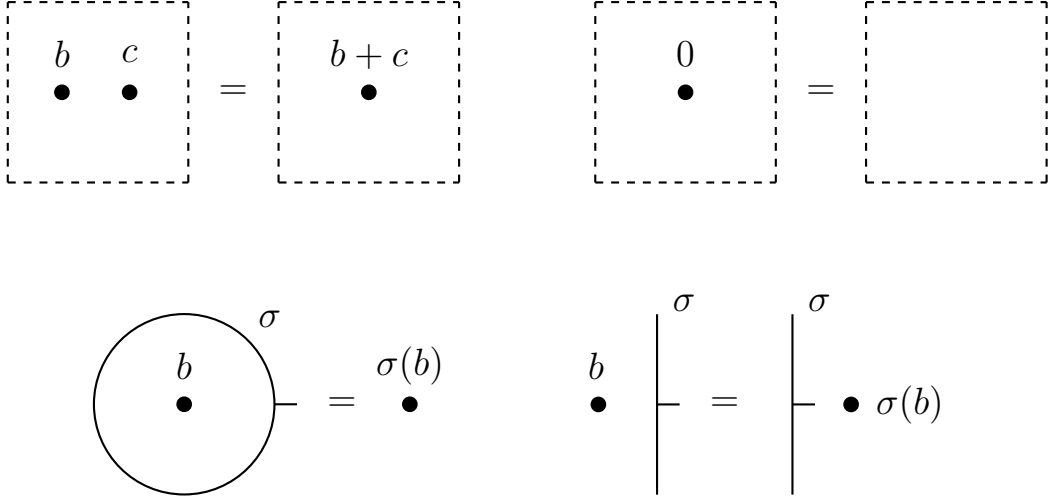

    \centering


    \caption{Top row relations allow to merge dots via addition in $B$ and to remove a dot labeled $0\in B$. The two relations in the bottom row are easily shown to be equivalent using an isotopy and the relation in Figure~\ref{fig5_5} on the right. The latter relation follows from the relations in the bottom row of Figure~\ref{fig5_16b}.  }
    \label{fig5_20}
\end{figure}


\vspace{0.1in}

One can further assume that $B$ is an abelian monoid with an action of $G$ on it rather than an abelian group. The notion of a  $(G,B)$-decoration of $S$ modulo Figure~\ref{fig5_16a}, \ref{fig5_16b}, \ref{fig5_20} relations makes sense, and one can consider universal theories and state spaces for such pairs as well, although there is no underlying topological space $X$  to interpret equivalences classes of $(G,B)$-decorations as homotopy classes of maps into $X$.


\begin{remark}
In this subsection, we describe the Poincar\'e dual diagrammatics (to the usual diagrammatics) for specifying representations of the fundamental groups of a surface, as well as propose to study universal theories for such representations, which should generalize Turaev's homotopy TQFTs in two dimensions. 
\end{remark}

\subsubsection{Basic structure of Frobenius automorphisms.} Fix an $\varepsilon$-automorphism $\sigma$ and assume that $\kk$ is algebraically closed (if $\kk$ is not closed, this can easily be achieved by passing to the algebraic closure via scalar extension, i.e., pass to $\overline{A}:=A\otimes_{\kk}\overline{\kk}$). Then $A$ decomposes into the direct sum of generalized weight spaces for $\sigma$, 
\begin{equation}
    A \ = \ \oplusop{\lambda} A_{\lambda}, \ \ \  (\sigma-\lambda)^N\big|_{A_{\lambda}} = 0, \quad   
    N \gg 0, 
    \quad  
    \lambda\in \kk^{\ast}.
\end{equation}
Note that $\lambda\not=0$ for a nonzero weight space $A_{\lambda}$, since $\sigma$ is an automorphism. 
We have $A_{\lambda}A_{\mu}\subset A_{\lambda\mu}$, making $A$ into a graded algebra, and $\sigma(A_{\lambda})=A_{\lambda}$. 

\vspace{0.1in} 

Let $\Lambda=\{\lambda\in\kk^{\ast}|A_{\lambda}\not=0\}$ 
be the subset of 
weights $\lambda$ such that $A_{\lambda}\not=0$. 
Let $\Lambda^{\ast}$ be the subgroup of $\kk^{\ast}$ generated by $\Lambda$. Algebra $A$ is naturally graded by the abelian group $\Lambda^{\ast}$. 

\vspace{0.1in} 

Note that, in general, $\Lambda\not= \Lambda^{\ast}$. As an example, consider a four-dimensional algebra with an automorphism $\sigma$ given by
\begin{equation}\label{eq_A_sigma}
    A = \kk[a,b]/(a^2,b^2), 
    \quad \sigma(a)=\lambda a, 
    \quad 
    \sigma(b)=\lambda^{-1}b,
\end{equation}
where $\lambda$ is any element of $\kk^{\ast}$, 
and the trace map 
\begin{equation} \label{eq_var_eps1}
    \varepsilon(ab)=1, 
    \quad  \varepsilon(1)=\varepsilon(a)=\varepsilon(b)=0.
\end{equation}
Then $\sigma$ is an $\varepsilon$-automorphism, $\Lambda=\{1,\lambda^{\pm 1}\}$, and $\Lambda^{\ast}$ is the subgroup generated by $\lambda$, infinite if $\lambda$ is not a root of unity in $\kk$.  

\begin{lemma}
The trace map $\varepsilon$ is zero on $A_{\lambda}$ for $\lambda\not= 1$.
\end{lemma}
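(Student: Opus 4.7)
The plan is to exploit the identity $\varepsilon\circ\sigma = \varepsilon$, which is equivalent to $\varepsilon\circ(\sigma-\mathrm{id}) = 0$ as a linear functional on $A$. So it suffices to show that every element of $A_\lambda$ lies in the image of $\sigma - \mathrm{id}$ when $\lambda \neq 1$.

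First I would observe that $A_\lambda$ is $\sigma$-invariant, and that $\sigma|_{A_\lambda}$ can be written as $\lambda\cdot\mathrm{id} + N$, where $N := (\sigma-\lambda\cdot\mathrm{id})|_{A_\lambda}$ is nilpotent by definition of the generalized weight space. Consequently
\[
(\sigma - \mathrm{id})|_{A_\lambda} \;=\; (\lambda-1)\cdot\mathrm{id} + N.
\]
When $\lambda \neq 1$, the scalar $\lambda-1$ is invertible in $\kk$, and a nilpotent perturbation of an invertible scalar operator is still invertible (its inverse is given by the usual finite Neumann-type series $(\lambda-1)^{-1}\sum_{k\geq 0}(-N/(\lambda-1))^k$, which terminates since $N$ is nilpotent).

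Therefore $(\sigma-\mathrm{id})$ restricts to a bijection on $A_\lambda$. Given any $a\in A_\lambda$, choose $b\in A_\lambda$ with $(\sigma-\mathrm{id})(b)=a$. Then
\[
\varepsilon(a) \;=\; \varepsilon(\sigma(b)) - \varepsilon(b) \;=\; \varepsilon(b) - \varepsilon(b) \;=\; 0,
\]
using $\varepsilon\circ\sigma=\varepsilon$ in the middle equality. This proves the lemma.

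There is no real obstacle here; the only mild subtlety is verifying that the argument works in the generalized (not just ordinary) weight space setting, and this is handled uniformly by the nilpotent-plus-nonzero-scalar invertibility observation. The same conclusion can alternatively be obtained by induction on the nilpotency filtration $\ker(\sigma-\lambda)^k|_{A_\lambda}$ starting from the eigenvector case $(\lambda-1)\varepsilon(a)=0$, but the invertibility argument above avoids any induction.
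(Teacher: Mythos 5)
Your proof is correct. It rests on exactly the same input as the paper's proof — the identity $\varepsilon\circ\sigma=\varepsilon$ together with the nilpotency of $\sigma-\lambda$ on the generalized weight space — but you package it differently. The paper argues element-by-element: it first treats the eigenvector case, where $\varepsilon(\sigma(v))=\lambda\varepsilon(v)$ and $\varepsilon(\sigma(v))=\varepsilon(v)$ force $(\lambda-1)\varepsilon(v)=0$, and then runs an induction on the nilpotence degree of $v$ with respect to $\sigma-\lambda$ (this is precisely the alternative you mention at the end). You instead observe that $(\sigma-\mathrm{id})|_{A_\lambda}=(\lambda-1)\mathrm{id}+N$ is invertible for $\lambda\neq 1$, so $A_\lambda\subseteq\operatorname{im}(\sigma-\mathrm{id})\subseteq\ker\varepsilon$, which dispatches the generalized weight space in one stroke with no induction. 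Your route is arguably cleaner: it also sidesteps the paper's slightly awkward final step involving $\sigma^{-1}(v)$ and the observation $A_0=0$, since inverting $(\lambda-1)\mathrm{id}+N$ only requires $\lambda\neq 1$, not $\lambda\neq 0$. No gaps.
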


\proof Let $v\in A_{\lambda}$ and assume first $\sigma(v)=\lambda v$ and $\varepsilon(v)\not=0$. Then $\varepsilon(\sigma(v))=\varepsilon(v)$, forcing $\lambda=1$. By induction on nilpotence degree of $v$ relative to $\sigma-\lambda$ we can assume $\varepsilon(\sigma(v)-\lambda v)=0$. Then $\varepsilon(\sigma(v))=\lambda \varepsilon(v)$ showing that either $\lambda=1$ or $\varepsilon(\sigma(v))=\lambda=0$. Applying this to $\sigma^{-1}(v)$ in place of $v$ and observing that $A_0=0$ completes the proof. 
\endproof

\begin{corollary} \label{cor_nondeg} $\varepsilon$ restricts to a nondegenerate pairing $A_{\lambda}\otimes A_{\lambda^{-1}}\lra \kk$ for each $\lambda\in\Lambda$. In particular, $A_1$ is a commutative Frobenius algebra. 
\end{corollary}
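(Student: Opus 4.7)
The plan is to deduce the corollary directly from the preceding lemma together with the fact that the generalized weight space decomposition is compatible with multiplication, i.e., $A_{\lambda}A_{\mu}\subset A_{\lambda\mu}$.

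First I would show the pairing $A_{\lambda}\otimes A_{\mu}\lra \kk$ given by $a\otimes b\mapsto \varepsilon(ab)$ vanishes whenever $\mu\neq \lambda^{-1}$. Indeed, $ab\in A_{\lambda\mu}$, and by the lemma $\varepsilon$ kills $A_{\lambda\mu}$ unless $\lambda\mu=1$. This already shows that the pairing $\varepsilon\circ m$ on $A\otimes A$ respects the $\Lambda^{\ast}$-grading and that the ``off-diagonal'' blocks vanish.

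Next I would establish nondegeneracy on each matching block. Fix $\lambda\in\Lambda$ and a nonzero $a\in A_{\lambda}$. Since $(A,\varepsilon)$ is Frobenius, the form $(x,y)\mapsto \varepsilon(xy)$ is nondegenerate on $A$, so there exists $b\in A$ with $\varepsilon(ab)\neq 0$. Decomposing $b=\sum_{\mu\in\Lambda}b_{\mu}$ with $b_{\mu}\in A_{\mu}$ gives $\varepsilon(ab)=\sum_{\mu}\varepsilon(ab_{\mu})$, and by the previous paragraph only the summand $\mu=\lambda^{-1}$ survives. Hence $\varepsilon(ab_{\lambda^{-1}})\neq 0$ with $b_{\lambda^{-1}}\in A_{\lambda^{-1}}$, proving nondegeneracy in the first variable. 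Swapping the roles of $\lambda$ and $\lambda^{-1}$ handles the other variable.

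Finally, for the ``in particular'' clause, $A_1$ is a subalgebra of $A$: it contains the unit (since $\sigma(1)=1$ forces $1\in A_1$), it is closed under multiplication by the grading property $A_1 A_1\subset A_1$, and commutativity is inherited from $A$. The nondegeneracy of $\varepsilon|_{A_1\otimes A_1}$ is the $\lambda=1$ case of what was just proved, and since $A$ is finite dimensional so is $A_1$; thus $(A_1,\varepsilon|_{A_1})$ is a commutative Frobenius algebra. There is no real obstacle here — the entire argument is a short bookkeeping exercise once the lemma is in hand, with the only subtle point being the observation that nondegeneracy of $\varepsilon$ on $A$ plus vanishing of the off-diagonal blocks forces the diagonal blocks to be nondegenerate individually.
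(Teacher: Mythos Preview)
Your proof is correct and is exactly the argument the paper has in mind. The paper states the corollary without proof, as an immediate consequence of the preceding lemma together with the grading property $A_{\lambda}A_{\mu}\subset A_{\lambda\mu}$; your write-up supplies precisely those details.
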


\begin{example}\label{ex_nonsemi}
 Consider Frobenius algebra $A$ in \eqref{eq_A_sigma} with the trace given by \eqref{eq_var_eps1} but a different $\sigma$:
 \begin{equation}
     \sigma(a)=a+b, 
     \quad 
     \sigma(b) =b, 
 \end{equation}
 which also requires $\mathrm{char}(\kk)=2$ to define $\sigma$. Then $A$ is the generalized $1$-eigenspace of $\sigma$ and the action of $\sigma$ is not semisimple. 
\end{example}

The direct sum of eigenspaces $A_{\lambda}'\subset A_{\lambda}$, over $\lambda\in \Lambda$, is a subalgebra of $A$ which is not necessarily Frobenius (see Example~\ref{ex_nonsemi}). 

\vspace{0.1in} 
    
There does not seem to be a substantial literature about Frobenius automorphisms; they are discussed in Wang~\cite{Wa} and several other papers.


\subsection{Field extensions and patched surfaces}

\subsubsection{Traces of field extensions} 
\label{subsubsec_traces}

Let $\kk\subset F$ be a field extension of finite degree $n$. The trace  map $\varepsilon: F\lra \kk,$ $\varepsilon(x)=\tr_{F/\kk}(m_x)$ assigns to $x\in F$ the trace of multiplication by $x$ map $m_x:F\lra F$, $m_x(a)=xa$, viewed as a
$\kk$-linear endomorphism of vector space $F$. A basic result on field extensions says that $\varepsilon$ is nondegenerate if and only if the extension is separable, {\it e.g.}, see \cite[Theorem 5.2]{Ja} and lecture notes by Conrad \cite{Co}. 

\begin{prop} The pair $(F,\varepsilon)$, for $\varepsilon=\tr_{F/\kk}$ and a finite separable extension $F/\kk$, is a commutative Frobenius $\kk$-algebra. Any element $\sigma\in \Gal(F/\kk)$ of the Galois group is an $\varepsilon$-automorphism.
\end{prop}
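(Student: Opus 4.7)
The plan is to verify the three ingredients of a commutative Frobenius $\kk$-algebra for $(F,\varepsilon)$ and then check that every Galois automorphism intertwines multiplication in a way that preserves the trace. First, since $F$ is a field of finite degree over $\kk$, it is automatically a finite-dimensional commutative $\kk$-algebra, so the only nontrivial algebraic point is the nondegeneracy of $\varepsilon$. This is the classical criterion for separability: the symmetric bilinear form $\langle x,y\rangle := \tr_{F/\kk}(xy)$ on $F$ is nondegenerate if and only if $F/\kk$ is separable. I would simply cite this result (it follows, for example, by base-changing to $\overline{\kk}$ and writing $\varepsilon(x)=\sum_{i=1}^{n}\iota_i(x)$ as a sum over the $n$ distinct $\kk$-embeddings $\iota_i\colon F\hookrightarrow \overline{\kk}$, with the Vandermonde determinant of a primitive element's conjugates exhibiting nondegeneracy).

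For the second assertion, I would observe that any $\sigma\in \Gal(F/\kk)$ is by definition a $\kk$-algebra automorphism of $F$, so the only thing to check is $\varepsilon\circ\sigma=\varepsilon$. The key calculation is that, for every $x\in F$, the multiplication map $m_{\sigma(x)}\colon F\to F$ satisfies
\[
 m_{\sigma(x)} \;=\; \sigma\circ m_x \circ \sigma^{-1}
\]
as $\kk$-linear endomorphisms of $F$, because $\sigma(x\cdot \sigma^{-1}(y))=\sigma(x)\,y$ for all $y\in F$. Since conjugate endomorphisms have equal traces, this gives $\varepsilon(\sigma(x))=\tr(m_{\sigma(x)})=\tr(m_x)=\varepsilon(x)$.

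There is essentially no obstacle: both ingredients are standard. The one place where a reader might want more detail is the invocation of the separability criterion, and if one prefers a self-contained argument, the base-change-to-$\overline{\kk}$ viewpoint also makes the $\sigma$-invariance of $\varepsilon$ transparent: under $F\otimes_\kk\overline{\kk}\cong \overline{\kk}^{\,n}$, the trace becomes the sum of coordinates and $\sigma$ acts by permuting the embeddings $\iota_i$, hence preserves the sum. Either perspective yields the proposition in a few lines.
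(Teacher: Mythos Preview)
Your argument is correct. The paper does not actually supply a proof of this proposition: it states it as a standard fact, having already noted in the preceding sentence that nondegeneracy of $\tr_{F/\kk}$ is equivalent to separability (with a reference to~\cite{Co}), and it gives no explicit justification for the $\varepsilon$-invariance of Galois automorphisms. Your conjugation identity $m_{\sigma(x)}=\sigma\circ m_x\circ\sigma^{-1}$ is exactly the clean way to see the latter, and your alternative description via the permutation action on minimal idempotents of $F\otimes_\kk\overline{\kk}$ is in fact the viewpoint the paper adopts immediately afterward (in the discussion following the proposition) to compute with these TQFTs. So your proof fills in what the paper leaves implicit, using the same underlying ideas.
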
 

\proof 
The first part of the proposition is the nondegeneracy statement  right before it. The second part is trivial, since an automorphism of $
\Gal(F/\kk)$ preserves traces of multiplication operators by elements of $F$.  
\endproof

Consequently, each extension $\kk\subset F$ as above defines a two-dimensional TQFT $\mcF$ with defect lines and Galois group $\Gal(F/\kk)$ being a subgroup of $G(F,\varepsilon)$. To $m$ circles, this TQFT associates $F^{\otimes m}$, with tensor product taken over $\kk$. The defect lines in this TQFT are labeled by elements of $\Gal(F/\kk)$. 

\vspace{0.1in} 

Closed surfaces with defect lines in $(\kk,F)$ TQFT admit a straightforward computation. 
That is, the action of $\Gal(F/\kk)$ on $F$ extends to $F\otimes_{\kk}\okk$ via the trivial action on the second factor. The resulting  action is $\okk$-linear. 

\vspace{0.1in} 

Writing $F$ as a simple extension, $F\cong \kk[x]/(f(x))$, a factorization in the algebraic closure
\begin{equation}
    f(x)=(x-\lambda_1)\cdots (x-\lambda_n),
\end{equation}
with distinct $\lambda_1,\dots, \lambda_n$, gives minimal idempotents for the direct product decomposition, 
\begin{equation}
e_i \ = \ \prod_{j\not= k} \frac{x-\lambda_j}{\lambda_k-\lambda_j}. 
\end{equation}
Although we cannot explicitly write down an action of $\sigma$ on $x$, we know that the action permutes minimal  idempotents $e_i$ in the same way $\sigma$ permutes roots $\lambda_i$. 

\vspace{0.1in} 

We therefore see that a finite extension $F/\kk$ is separable if and only if the tensor product $F\otimes_{\kk} \okk$ with the algebraic closure of $\kk$ is a direct product of copies of $\okk$, 
\begin{equation}
     F\otimes_{\kk} \okk \ \cong \okk \times \okk \times \dots \times \okk, 
\end{equation}
(necessarily of $[F:\kk]$ copies). Another equivalent condition is that $F\otimes_{\kk}\okk$ is a semisimple algebra (equivalently, $F\otimes_{\kk}\okk$ does not contain nilpotent elements). The trace of $m_a: F\lra F$ can be computed in the $\okk$-vector space $F\otimes_{\kk}\okk$.

\vspace{0.1in} 

This passage to the algebraic closure results in a 2D TQFT over the ground field  $\okk$ with the Frobenius algebra 
$F\otimes_{\kk}\okk$, trace map given by 
\begin{equation}
    \varepsilon (e_i) = 1, \ \  i=1,\dots, n
\end{equation}
and comultiplication 
\begin{equation}
    \Delta(e_i) = e_i \otimes e_i , \ \  i=1,\dots, n.
\end{equation}
Action of $\sigma\in \Gal(F/\kk)$ is given by permutation of $e_1,\dots, e_n$ corresponding to the action of $\sigma$ on the roots of $f(x)$. Consequently, each defect circle map can be computed in the basis of minimal idempotents as their permutation.

\vspace{0.1in} 

The map associated to any surface with defect circles can now be computed explicitly. For instance, consider a one-holed torus with a defect circle $\sigma$, see Figure~\ref{fig5_21} left.

\vspace{0.1in}

\begin{figure}
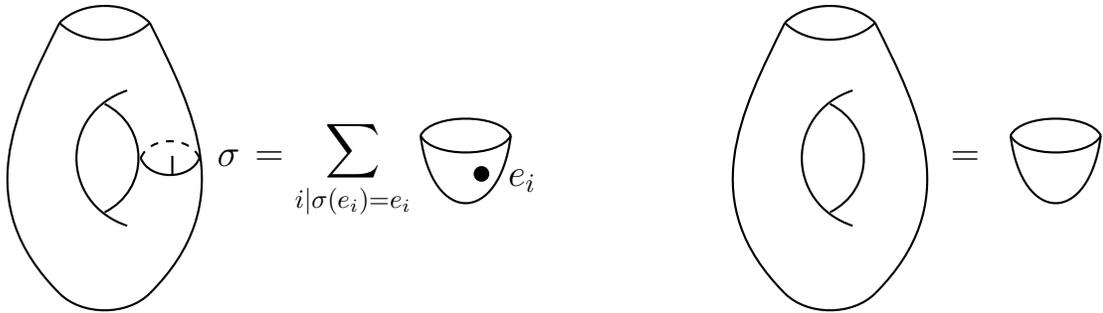

    \centering

    \caption{Left: simplification of a one-holed torus with a defect circle. Right: one-holed torus equals a disk.}
    \label{fig5_21}
\end{figure}

We compute the induced map 
\begin{equation}
    1=\sum_i e_i \stackrel{\Delta}{\lra} \sum_i e_i\otimes e_i \stackrel{1\otimes \sigma}{\lra} \sum_i e_i\otimes \sigma(e_i) \stackrel{m}{\lra}
    \sum_{i|\sigma(e_i)=e_i}e_i  \in F\otimes_{\kk}\okk.
\end{equation}
In particular, one-holed torus simplifies to a disk, see Figure~\ref{fig5_21} right. 

\vspace{0.1in}

Capping off the boundary by a disk, a 2-torus with an essential $\sigma$-defect 
circle evaluates to $[F^{\sigma}:\kk]$, the degree of the fixed field of $\sigma$ over $\kk.$  An undecorated 2-torus evaluates to $\dim_{\kk}(F)=[F:\kk]$, seen as an element of $\kk$. Over a field of finite characteristics, these evaluations may be equal to $0$.

\vspace{0.1in} 

More complicated cobordisms with circle defects can be computed analogously. For example, Figure~\ref{fig5_22} shows the evaluation of a genus two surface with 3 defect circles labeled $\sigma_1,\sigma_2,\sigma_3\in \Gal(F/\kk)$. 

\vspace{0.1in}

\begin{figure}
    \centering
\begin{tikzpicture}[scale=0.6]


\draw[thick] (0,0) .. controls (.25,3) and (7.75,3) .. (8,0);
\draw[thick] (0,0) .. controls (.25,-3) and (7.75,-3) .. (8,0);

\draw[thick] (3,1) .. controls (2,.5) and (2,-.5) .. (3,-1);
\draw[thick] (2.5,.65) .. controls (3,.5) and (3,-.5) .. (2.5,-.65);

\draw[thick] (5.75,1) .. controls (4.75,.5) and (4.75,-.5) .. (5.77,-1);
\draw[thick] (5.25,.65) .. controls (5.75,.5) and (5.75,-.5) .. (5.25,-.65);

\draw[thick] (0,0) .. controls (0.25,-.5) and (2.00,-.5) .. (2.25,0);
\draw[thick,dashed] (0,0) .. controls (0.25,.5) and (2.00,.5) .. (2.25,0);

\draw[thick] (2.9,0) .. controls (3.15,-.5) and (4.75,-.5) .. (5,0);
\draw[thick,dashed] (2.9,0) .. controls (3.15,.5) and (4.75,.5) .. (5,0);

\draw[thick] (5.6,0) .. controls (5.85,-.5) and (7.75,-.5) .. (8,0);
\draw[thick,dashed] (5.6,0) .. controls (5.85,.5) and (7.75,.5) .. (8,0);

\draw[thick] (1.15,-.4) -- (1.15,.1);
\draw[thick] (4,-.4) -- (4,.1);
\draw[thick] (6.85,-.4) -- (6.85,.1);

\node at (-.50,-0.4) {\Large $\sigma_1$};
\node at (4,-0.85) {\Large $\sigma_2$};
\node at (8.60,-0.4) {\Large $\sigma_3$};

\node at (9.75,0) {\Large $=$};

\node at (18,0) {\large $\#\left\{ i: 1\leq i\leq n, \sigma_1(e_i)=\sigma_2(e_i)=\sigma_3(e_i)\right\}$};

\end{tikzpicture}
    \caption{Evaluating a genus two surface with three circle defects. }
    \label{fig5_22}
\end{figure}
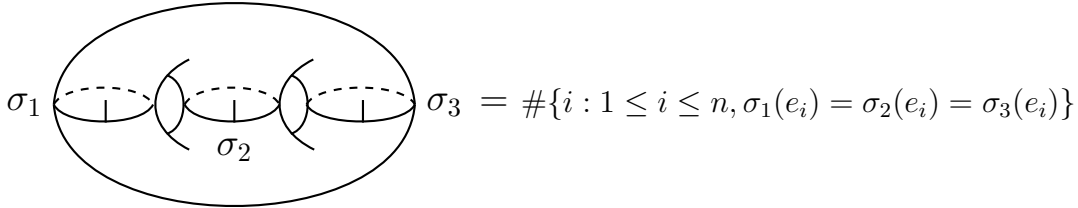

\subsubsection{A chain of extensions and evaluation of patched surfaces}
\label{Chain_ext_patched_surfaces}
Recall that for a finite Galois extension $\kk\subset F$, the trace map is given by 
\begin{equation}
    \tr_{F/\kk}(a) \ = \ \sum_{\sigma\in \Gal(F/\kk) }\sigma(a). 
\end{equation}

Consider now a chain of finite separable field extensions $\kk\subset F \subset K$ with $[K:F]=n$, $[F:\kk]=m$. The trace maps 
\begin{equation*}
    \tr_{F/\kk}: F\lra \kk, \quad  
    \tr_{K/F}: K \lra F, \quad 
    \tr_{K/\kk}:K \lra \kk 
\end{equation*}
are non-degenerate and satisfy 
\begin{equation}
    \tr_{K/\kk} \ = \tr_{F/\kk}\circ \tr_{K/F},
\end{equation}
see \cite[Chapter 1, Theorem 5.2]{Ja}, 
and turn $F$ into a commutative Frobenius $\kk$-algebra and $K$ into a commutative Frobenius algebra over $F$ and over $\kk$~\cite[Sections 2.2.13 and 2.2.17]{Kc1}.

\vspace{0.1in}

Three commutative Frobenius algebras with traces
\begin{equation}
    (F,\kk,\tr_{F/\kk}), 
    \qquad 
    (K,F,\tr_{K/F}), 
    \qquad  
    (K,\kk,\tr_{K/\kk})
\end{equation}
give rise to three two-dimensional TQFTs that we denote by
\begin{equation}
    \mcF_F =\mcF_{F/\kk}, 
    \qquad 
    \mcF_K=\mcF_{K/F}, 
    \qquad 
    \mcF_{K/F},
\end{equation}
respectively. Defect lines in these TQFTs are labeled by elements of the corresponding Galois groups 
\begin{equation*}
    \Gal(F/\kk), \qquad  
    \Gal(K/F), \qquad 
    \Gal(K/\kk). 
\end{equation*}
It is natural to ask whether these three TQFTs can be combined into a single structure, and we now suggest one possible approach, first without the defect lines. 

\vspace{0.1in}

Consider a ``patched" or ``seamed" closed oriented surface $S$ which consists of regions labeled $F$ and $K$. Elements of $F$ and $K$ may float in the regions labeled by the corresponding field. 
Seam circles separate regions labeled $F$ and $K$, see Figure~\ref{fig5_2_1} for an example.

\vspace{0.1in}

\begin{figure}
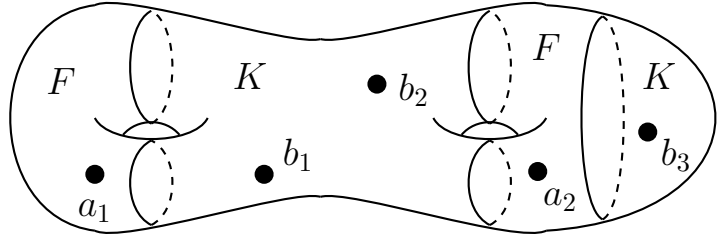

    \centering

    \caption{Seamed surface with facets checkerboard colored by fields $F,K$. Elements $a_i\in F$ and $b_j\in K$ float in the corresponding regions.   }
    \label{fig5_2_1}
\end{figure}

To evaluate such a surface to an element $\mcF(S)\in \kk$ let us use neck-cutting relations to separate each seam circle from the rest of the diagram. We do surgery on both sides of a seam circle using neck-cutting relations in extensions $F/\kk$ and $K/\kk$, correspondingly. Choose dual bases: 
\begin{itemize}
    \item $\{x_i\},\{y_i\}, 1\le i \le m$ for the Frobenius pair $(F,\kk,\tr_{F/\kk})$, 
    \item $\{x_j'\},\{y_j'\}, 1\le j \le n$ for the Frobenius pair $(K,F,\tr_{K/F})$, 
    \item $\{x_k''\},\{y_k''\}, 1\le k \le mn$ for the Frobenius pair $(K,\kk,\tr_{K/\kk})$. 
\end{itemize}
Note that we may choose $\{x_ix_j'\},\{y_iy_j'\}$ as dual bases for the third extension. 

\vspace{0.1in} 

Each seam circle $C$ bounds one component (facet) labeled $F$ and one labeled $K$. Choose circles parallel to $C$ in each of these components and apply neck-cutting along these circles as shown in Figure~\ref{fig5_2_2}. 

\vspace{0.1in}

\begin{figure}
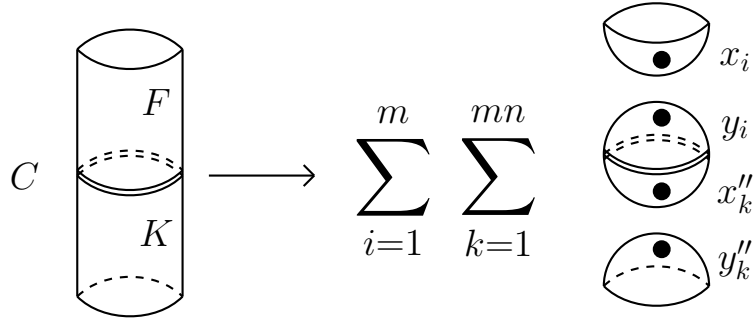

    \centering

    \caption{ Surgery on a seam circle. Dotted seamed spheres are then evaluated using trace maps.  }
    \label{fig5_2_2}
\end{figure}

Doing this neck-cutting along each seam circle converts $S$ into a sum of terms which are disjoint unions of connected components of three types:
\begin{itemize}
    \item closed connected surfaces labeled $F$ with elements of $F$ floating on them, 
    \item closed connected surfaces labeled $K$ with element of $K $ floating on them, 
    \item spheres with a seam circle and an element of $F$, respectively $K$,  in a disk labeled $F$, respectively $K$, see Figure~\ref{fig5_2_3}. 
\end{itemize}

\begin{figure}
    \centering
\begin{tikzpicture}[scale=0.6]
\draw[thick] (4,0) arc (0:360:2);

\draw[thick,dashed] (0,0) .. controls (.5,.5) and (3.5,.5) .. (4,0);
\draw[thick] (0,0) .. controls (.5,-.5) and (3.5,-.5) .. (4,0);

\draw[thick,dashed] (0,0.1) .. controls (.5,.6) and (3.5,.6) .. (4,0.1);
\draw[thick] (0,0.1) .. controls (.5,-.4) and (3.5,-.4) .. (4,0.1);

\draw[thick,fill] (3.25,-1.1) arc (0:360:0.25);
\node at (3.75,-1.75) {\Large $b$};

\draw[thick,fill] (3.25,1.1) arc (0:360:0.25);
\node at (3.75,1.75) {\Large $a$};

\node at (-.25, 1.75) {\Large $F$};
\node at (-.25,-1.75) {\Large $K$};
\end{tikzpicture}













    \caption{Seamed 2-sphere, denoted $\SS^2(a,b)$, with dots $a\in F$ and $b\in K$ floating in the $F$-disk and $K$-disk, respectively. }
    \label{fig5_2_3}
\end{figure}

Components of the first and second kind are evaluated via TQFTs for $(F,\kk,\tr_{F/\kk})$ and $(K,\kk,\tr_{K/\kk})$, respectively, to yield elements of $\kk$. We consider the following evaluation of the seamed 2-sphere $\SS^2(a,b)$: 
\begin{equation}\label{eq_eval}
    \mcF(\SS^2(a,b)) =  \tr_{F/\kk}( a\, \tr_{K/F}(b)).
\end{equation}
We can interpret this evaluation, see Figure~\ref{fig5_2_4}, as first removing the $K$-disk with dot $b$ and inserting dot $\tr_{K/F}(b)\in F$ in its place, now floating on the 2-sphere labeled $F$ alongside the original dot $a$. Now multiply the two dots and evaluate using $\varepsilon_F=\tr_{F/\kk}$. Figure~\ref{fig5_2_4} shows the two steps in this evaluation. 

\vspace{0.1in}

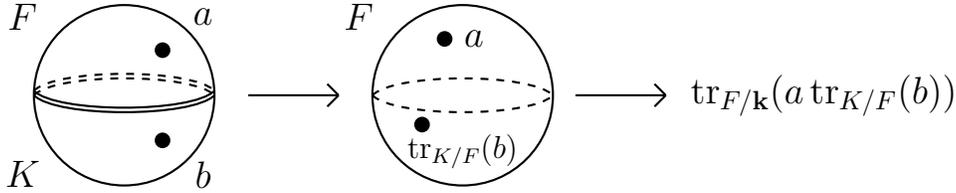
\begin{figure}
    \centering
\begin{tikzpicture}[scale=0.6]
\draw[thick] (4,0) arc (0:360:2);

\draw[thick,dashed] (0,0) .. controls (.5,.5) and (3.5,.5) .. (4,0);
\draw[thick] (0,0) .. controls (.5,-.5) and (3.5,-.5) .. (4,0);

\draw[thick,dashed] (0,0.1) .. controls (.5,.6) and (3.5,.6) .. (4,0.1);
\draw[thick] (0,0.1) .. controls (.5,-.4) and (3.5,-.4) .. (4,0.1);

\draw[thick,fill] (3.25,-1.1) arc (0:360:0.25);
\node at (2.25,-1.20) {\Large $b$};

\draw[thick,fill] (3.25,1.1) arc (0:360:0.25);
\node at (2.25,1.30) {\Large $a$};

\node at (-.25, 1.75) {\Large $F$};
\node at (-.25,-1.75) {\Large $K$};

\draw[thick,->] (5,0) -- (7,0);

\begin{scope}[shift={(0.5,0)}]
 
\draw[thick] (11.5,0) arc (0:360:2);

\node at (7.2, 1.75) {\Large $F$};

\draw[thick,dashed] (7.5,0) .. controls (8,.4) and (11,.4) .. (11.5,0);
\draw[thick,dashed] (7.5,0) .. controls (8,-.4) and (11,-.4) .. (11.5,0);


\draw[thick,fill] (9.25,1.25) arc (0:360:0.25);
\node at (9.75,1.25) {\Large $a$};

\draw[thick,fill] (8.75,-.65) arc (0:360:0.25);
\node at (9.5,-1.28) {$\tr_{K/F}(b)$};

\draw[thick,->] (12.5,0) -- (14.5,0);

\node at (19,0) {\Large$\tr_{F/\kk}(a\,\tr_{K/F}(b))$};
\end{scope}

\end{tikzpicture}
    \caption{Evaluation of the seamed sphere $\SS^2(a,b)$, given by pushing $b$ via relative trace into the $F$-facet and evaluating via $\tr_{F/\kk}$. }
    \label{fig5_2_4}
\end{figure}

With evaluations for all three types of connected components at hand, we know how to evaluate an arbitrary seamed $(F,K)$-surface $S$ as above. In the sum resulting after neck-cutting, for each term we take the product of evaluations of all connected components and then sum these elements of $\kk$. Denote this evaluation by $\mcF(S)$ or $\langle S \rangle$. 

\vspace{0.1in}

It is easy to see that, if a seam circle $C$ bounds an $F$-disk or a $K$-disk on one side (or such disks on both side), possibly with some dots in them, then one can skip the neck-cutting procedure on the corresponding side of $C$ (or on both sides of $C$) without changing the evaluation. This observation implies  relations in the top row of Figure~\ref{fig5_2_5}.  

\vspace{0.1in}

\begin{figure}
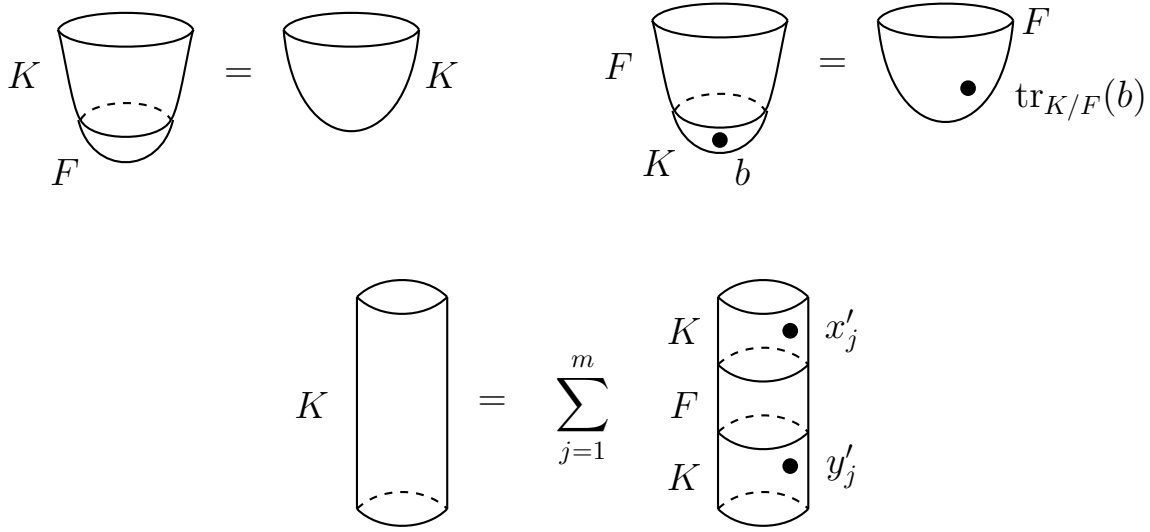

    \centering


    \caption{Some skein relations. Top left: a dotless $F$-disk may be removed. Top right: pushing a dot off a $K$-disk. Bottom: partial neck-cutting from $K$ to $F$.}
    \label{fig5_2_5}
\end{figure}

Specializing Figure~\ref{fig5_2_5} relation on the top right to $b\in F$, the dot on the right hand side has label $nb$, since $\tr_{K/F}(b)=nb$, where $n=[K:F]$. Consequently, if $\nchar{\kk}=p$ and $p|n$, the right hand side is $0$.

\vspace{0.1in}

 
 A dot on an $F$-component can be pushed across a seam into an adjacent $K$-component, since the trace $\tr_{K/F}$ is $F$-linear, see Figure~\ref{fig5_2_6}. If the $F$-component were a disk, it can then be removed. 
 
 \vspace{0.1in}

\begin{figure}
    \centering
\begin{tikzpicture}[scale=0.6]
\draw[thick,dashed] (0,0) -- (4,0);
\draw[thick,dashed] (0,0) -- (0,4);
\draw[thick,dashed] (4,0) -- (4,4);
\draw[thick,dashed] (0,4) -- (4,4);
\node at (5,2) {\Large $=$};
\draw[thick,dashed] (6,0) -- (10,0);
\draw[thick,dashed] (6,0) -- (6,4);
\draw[thick,dashed] (10,0) -- (10,4);
\draw[thick,dashed] (6,4) -- (10,4);

\draw[thick] (2,0) -- (2,4);
\draw[thick] (8,0) -- (8,4);

\node at (1.25,3.35) {\Large $F$};
\node at (2.75,3.35) {\Large $K$};

\draw[thick,fill] (0.85,1) arc (0:360:0.25);
\node at (1.25,.65) {\Large $a$};

\node at (7.25,3.35) {\Large $F$};
\node at (8.75,3.35) {\Large $K$};

\draw[thick,fill] (9.15,1.15) arc (0:360:0.25);
\node at (9.45,.65) {\Large $a$};

\end{tikzpicture}
\qquad \qquad 
\begin{tikzpicture}[scale=0.6]
\draw[thick,dashed] (0,0) .. controls (0.25,.5) and (1.75,.5) .. (2,0);
\draw[thick] (0,0) .. controls (0.25,-.5) and (1.75,-.5) .. (2,0);

\draw[thick] (-.5,2) .. controls (-.25,2.5) and (2.25,2.5) .. (2.5,2);
\draw[thick] (-.5,2) .. controls (-.25,1.5) and (2.25,1.5) .. (2.5,2);
\node at (0.40,1.05) {\Large $K$};

\draw[thick] (-.5,2) .. controls (-.4,1.75) and (-.2,.25) .. (0,0); 
\draw[thick] (2.5,2) .. controls (2.4,1.75) and (2.2,.25) .. (2,0); 

\draw[thick] (-0.05,0) .. controls (0.20,-1.75) and (1.80,-1.75) .. (2.05,0);
\node at (-.35,-1.15) {\Large $F$};

\draw[thick,fill] (1.40,-0.85) arc (0:360:0.25);
\node at (2.15,-1.25) {\Large $a$};

\node at (3.5,1) {\Large $=$};

\draw[thick] (4.5,2) .. controls (4.75,2.5) and (7.25,2.5) .. (7.5,2);
\draw[thick] (4.5,2) .. controls (4.75,1.5) and (7.25,1.5) .. (7.5,2);

\draw[thick] (4.5,2) .. controls (4.75,-1) and (7.25,-1) .. (7.5,2);
\node at (5.5,1.05) {\Large $K$};

\draw[thick,fill] (6.75,0.5) arc (0:360:0.25);
\node at (7.45,0.25) {\Large $a$};
\end{tikzpicture}

    \caption{Left: pushing an $F$-dot $a\in F$ across a seam into a $K$-component. Right: removing a dotted $F$-disk.  }
    \label{fig5_2_6}
\end{figure}
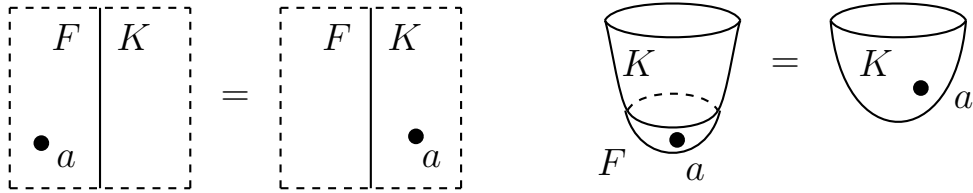

This allows us to move dots away from any $F$-component that  bounds a seam (otherwise it is a connected component of $S$).
Furthermore, Figure~\ref{fig5_2_7} relation holds. It allows to reduce $S$ to a surface where each $K$-facet has at most one boundary component. The relation can  be checked by doing surgeries on $F$-sides of the two circles  on the left hand  side, then using relations in Figure~\ref{fig5_2_6}.

\begin{figure}
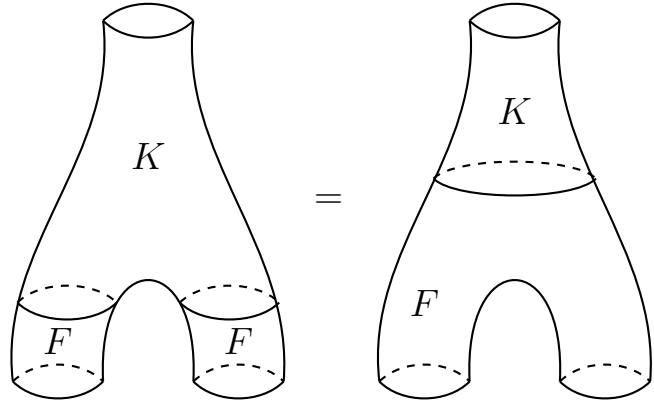

    \centering


    \caption{Different boundary  components of a $K$-facet can be merged into one.  }
    \label{fig5_2_7}
\end{figure}

\vspace{0.1in} 

Similar to the discussion in Section~\ref{subsubsec_traces}, such patched surfaces can be evaluated by tensoring the fields with the algebraic closure $\okk$ and looking at the chain of $\okk$-algebra inclusions and trace maps between them 
\begin{equation}
    \okk \subset F\otimes_{\kk}\okk \subset K\otimes_{\kk} \okk.
\end{equation}
Both rings $F\otimes_{\kk}\okk$, $K\otimes_{\kk}\okk$ are direct product of fields $\okk$, and under the inclusion
$F\otimes_{\kk}\okk \subset K\otimes_{\kk}\okk$
 minimal idempotents in 
$K\otimes_{\kk}\okk$ go to sums of distinct minimal idempotents of $K\otimes_{\kk}\okk$. Relative traces have a similar simple description. This allows to easily evaluate a patched surface to an element of $\kk$. In this way, the TQFT reduces to set-theoretic computations with roots of an irreducible polynomial describing the extension $K/\kk$, together with the Galois group action on the roots, and the  partition of roots corresponding to the subfield $F\subset K$.

\vspace{0.1in}

If one allows elements of $F$ and $K$ to float in the corresponding regions of the surface, the evaluation requires decomposing these elements in the bases of minimal idempotents of 
$F\otimes_{\kk}\okk$ and $K\otimes_{\kk}\okk$.

\subsubsection{Defect lines and Galois symmetries} 

Here we use notations from the previous subsection, including having a chain of finite separable field extensions $\kk\subset F\subset K$.

In this section, we can extend patched surfaces setup in Section~\ref{Chain_ext_patched_surfaces} by adding defect lines for Galois symmetries of extensions $F/\kk$ and $K/\kk$. Let $S$ be a patched $(F,K)$-surface. 
Choose a collection of disjoint circles with co-orientations on $F$-patches of $S$ and label each of  them by an element $\sigma\in\Gal(F/\kk)$ (not necessarily the same one). 
Likewise, choose a collection of disjoint circles with co-orientations on $K$-patches of $S$ and label each of them by an element $\tau\in \Gal(K/\kk)$. As before, dots labeled by elements of $F$ and $K$ may float in $F$- and $K$-regions of $S$, correspondingly. To evaluate such a decorated patched surface $S$, one applies neck-cutting around each of three types of seamed circles of $F$: 
\begin{itemize}
    \item $(F,K)$-circles, along which $K$- and $F$-regions of $S$ meet, 
    \item  $\Gal(F/\kk)$-circles in $F$-patches, \item $\Gal(K/\kk)$-circles in $K$-patches.
\end{itemize}
After that, evaluation reduces to the familiar cases that have already been discussed. We can denote the evaluation by $\mcF(S)$ or $\brak{S}\in \kk$.  

\vspace{0.1in} 

If $\sigma\in \Gal(K/\kk)$ preserves the subfield $F$, one can allow $\sigma$-circles to intersect seam lines separating $F$- and $K$-regions of $S$, see Figure~\ref{fig5_25}. 

\vspace{0.1in}

\begin{figure}
    \centering
\begin{tikzpicture}[scale=0.6]
\draw[thick] (0,0) -- (6,0);
\draw[thick,dashed] (0,-2) -- (0,2);
\draw[thick,dashed] (6,-2) -- (6,2);
\draw[thick] (4.5,-2) -- (4.5,2);
\draw[thick] (4.5,1) -- (5,1);

\node at (1, .75) {\Large $K$};
\node at (1,-.75) {\Large $F$};
\node at (5,-1.5) {\Large $\sigma$};

\draw[thick] (0.5,2) .. controls (0.75,1.5) and (2.75,1.5) .. (3,2.75);
\node at (3.4,2.45) {\Large $\tau$};

\draw[thick] (2,1.8) -- (1.8,2.4);

\end{tikzpicture}
    \caption{A crossing of a $\sigma$-circle and a seam line. }
    \label{fig5_25}
\end{figure}

Such a network can be evaluated as before, by tensoring all fields with $\okk$, representing $K$ as a simple extension, and working with the set of roots of the corresponding irreducible polynomial.

\vspace{0.1in} 

Finally, one can consider arbitrary finite Galois extensions $\kk\subset F$ and patched surfaces with regions labeled by finite extensions $F_i$. A seam circle separating regions labeled $F_i$ and $F_j$ is assumed to be co-oriented, and an inclusion $F_i \subset F_j$ is assigned to each such circle, see Figure~\ref{fig5_23} left. Elements of $F_i$ may float in $F_i$-regions. These regions may contain $\sigma$-circles, for $\sigma\in\Gal(F_i/\kk)$. Furthermore, these circles can be viewed as a special case of the seam circles of the first type, for the case when fields $F_i=F_j$ and the inclusion (isomorphism) $F_i\subset F_i$ is given by $\sigma.$

\vspace{0.1in}

\begin{figure}
    \centering    
\begin{tikzpicture}[scale=0.6]
\draw[thick] (0,0) -- (6,0);
\draw[thick,dashed] (0,-2) -- (0,2);
\draw[thick,dashed] (6,-2) -- (6,2);
\draw[thick] (3,0) -- (3,0.5);

\node at (1.5,1) {\Large $F_j$};
\node at (1.5,-1) {\Large $F_i$};

\node at (6.5,0) {\Large $\iota$};
\end{tikzpicture}
\qquad 
\qquad 
\begin{tikzpicture}[scale=0.6]
\draw[thick] (0,0) -- (6,0);
\draw[thick,dashed] (0,-2) -- (0,2);
\draw[thick,dashed] (6,-2) -- (6,2);
\draw[thick] (2,0) -- (2,0.5);

\draw[thick] (4,-2) -- (4,2);
\draw[thick] (4,1) -- (4.5,1);

\node at (1,1) {\Large $F_j$};
\node at (1,-1) {\Large $F_i$};

\node at (4,2.5) {\Large $\sigma$};

\node at (-.5,0) {\Large $\iota$};
\end{tikzpicture}
    \caption{Left: $\iota:F_i\hookrightarrow F_j$ is a field inclusion. 
    Right: A seam line that can intersect $(F_i,F_j)$-line corresponds to an automorphism $\sigma:F_j\rightarrow F_j$ that preserves the subfield $F_i$, that is,   $\sigma(\iota(F_i))=\iota(F_i)$.}
    \label{fig5_23}
\end{figure}
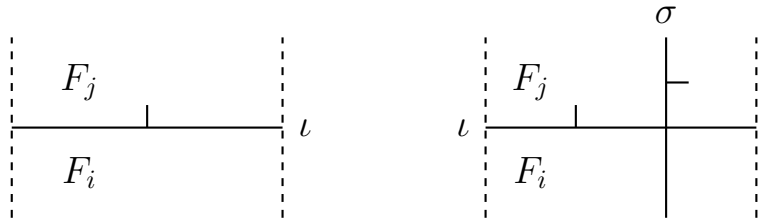

The seamed circles labeled by Galois group elements may intersect seamed circles of the first type, as long as the Galois symmetry $\sigma$ preserves the fields $F_i$ for all regions along the $\sigma$-circle, see Figure~\ref{fig5_23}.  

\vspace{0.1in}

Such closed networks can then be evaluated by working with a splitting field $K$ that contains copies of fields $F_i$, over all patches of the surface $S$, tensoring with the algebraic closure $\okk$, and working with minimal idempotents in $K\otimes_{\kk}\okk$ to evaluate the network. 

\vspace{0.1in}

With these evaluations at hand, one can then define state spaces for collections of circles that are patched from intervals labeled by various fields $F_i$ separated by points labeled by inclusions $F_i\subset F_j$ and Galois symmetries $\sigma:F_i\lra F_i$.

\vspace{0.1in}

The same approach allows to evaluate even more general networks. Namely, beside seam circles for inclusions $F_i\subset F_j$ one can consider networks with co-oriented edges labeled by inclusions $F_i\subset F_j$ that may contain trivalent (or even more general) vertices where three regions meet, see Figure~\ref{fig5_24} left.

\vspace{0.1in}

\begin{figure}
    \centering
\begin{tikzpicture}[scale=0.6]

\begin{scope}[shift={(0,0)}]


\draw[thick] (0,0) -- (0,2);
\draw[thick] (0,2) -- (2,4);
\draw[thick] (0,2) -- (-2,4);

\draw[thick] (0,1) -- (0.6,1);

\node at (-1.75,1.5) {\Large $F_1$};
\node at (2.00,1.5) {\Large $F_2$};
\node at (0,4.5) {\Large $F_3$};

\node at (0,-0.75) {\Large $\iota$};
\node at (2.5,4.5) {\Large $\jmath$};
\node at (-2.5,4.5) {\Large $\jmath\iota$};

\draw[thick] (-1,3) -- (-0.5,3.5);
\draw[thick] (1.0,3) -- (0.5,3.5);
\end{scope}

\begin{scope}[shift={(11,0)}]

\draw[thick] (0,0) -- (0,2);
\draw[thick] (0,2) -- (2,4);
\draw[thick] (0,2) -- (-2,4);

\draw[thick] (0,1) -- (0.6,1);

\node at (-1.25,0.75) {\Large $F$};
\node at (1.25,0.75) {\Large $F$};
\node at (0,4.5) {\Large $F$};

\node at (0,-0.75) {\Large $\sigma\tau$};
\node at (2.5,4.5) {\Large $\tau$};
\node at (-2.5,4.5) {\Large $\sigma$};

\draw[thick] (-1,3) -- (-0.5,3.5);
\draw[thick] (1.0,3) -- (1.5,2.5);

\draw[thick,fill] (3.0,2.25) arc (0:360:0.25); 
\node at (3.5,1.75) {\Large $a$};

\draw[thick,fill] (-1.5,2.5) arc (0:360:0.25);
\node at (-2.40,2) {\Large $b$};
\end{scope}
\end{tikzpicture}
    \caption{Left: $F_1$, $F_2$, and an $F_3$-region meet at a vertex, with the inclusion $F_1\subset F_3$ given by the composition $\jmath\circ\iota$ of inclusions $\iota:F_1\xhookrightarrow{} F_2$, $\jmath: F_2\xhookrightarrow{} F_3$. Right: as a special case, when all 3 fields are $F$ and the inclusions are isomorphisms in $G=\Gal(F/\kk)$, the networks match those that appear in homotopy 2D TQFTs with the group $G$. Picking an abelian subgroup $A\subset F$ stable under $G$ and allowing dots labeled by elements of $A$ to float in the regions corresponds to working with a space $X$ with $\pi_1(X)\cong G$ and $\pi_2(X)\cong A$ with the matching action of $G$ on $A$.  }
    \label{fig5_24}
\end{figure}
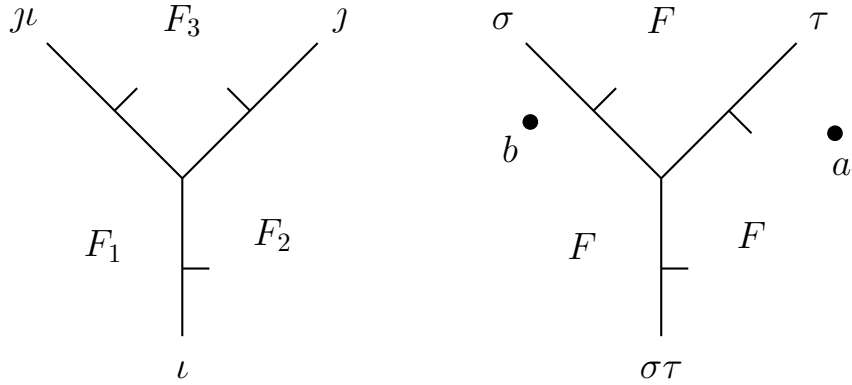

In the special case when all fields are the same field $F$ and seam edges are Galois symmetries $\sigma\in \Gal(F/\kk)$,
these networks are identical to those in Section~\ref{subsubsec_turaev}. One then obtains a special case of Turaev's homotopy 2D TQFTs, where networks describe conjugacy classes of homomorphisms $\pi_1(S)\lra \Gal(F/\kk)$, up to conjugation by elements of $\pi_1(S)$ or, more generally, homotopy classes of maps $S\lra X$ for $X$ as specified in the caption for Figure~\ref{fig5_24}. 

\vspace{0.1in}

Suitable state spaces for decorated patched circles for these theories can then be studied. 

\vspace{0.1in} 

It is possible to further refine the theory by introducing ``orbifold" points with a nontrivial ``monodromy" around them. These points may be located on facets of a network, along seam lines, along Galois ($\sigma$-defect) lines, and at vertices of the network, see Figures~\ref{fig5_26}, \ref{fig5_27} and \ref{fig5_28}. An orbifold point with a label $\sigma$ inside a facet (type (1) point, shown in Figure~\ref{fig5_26}) can be defined via a connected sum with a torus with a $\sigma$-defect circle. In the state sum, only idempotents $e_i$  with $\sigma(e_i)=e_i$ placed on that facet will contribute to the evaluation. 

\vspace{0.1in}

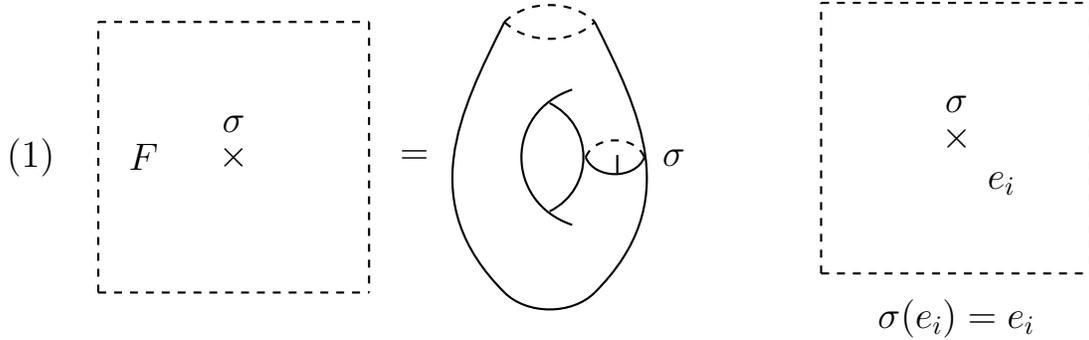
\begin{figure}
    \centering
\begin{tikzpicture}[scale=0.6]
\node at (5.5,-3) {\Large $(1)$};

\draw[thick,dashed] (7,-6) -- (13,-6);
\draw[thick,dashed] (7,-6) -- (7,0);
\draw[thick,dashed] (7,0) -- (13,0);
\draw[thick,dashed] (13,0) -- (13,-6);

\node at (8,-3) {\Large $F$};
\node at (10,-2.25) {\Large $\sigma$};
\node at (10,-3) {\Large $\times$};

\node at (14,-3) {\Large $=$};

\draw[thick,dashed] (16,0) .. controls (16.5,.5) and (17.5,.5) .. (18,0);
\draw[thick,dashed] (16,0) .. controls (16.5,-.5) and (17.5,-.5) .. (18,0);
\draw[thick] (16,0) .. controls (15,-2) and (14,-4) .. (16,-6);
\draw[thick] (18,0) .. controls (19,-2) and (20,-4) .. (18,-6);

\draw[thick] (16,-6) .. controls (16.5,-6.5) and (17.5,-6.5) .. (18,-6);

\draw[thick] (17.5,-1.5) .. controls (16,-2) and (16,-4) .. (17.5,-4.5);
\draw[thick] (17,-1.8) .. controls (18,-2.3) and (18,-3.7) .. (17,-4.2);

\draw[thick] (17.8,-3) .. controls (18,-3.5) and (18.9,-3.5) .. (19.1,-3);
\draw[thick,dashed] (17.8,-3) .. controls (18,-2.5) and (18.9,-2.5) .. (19.1,-3);

\draw[thick] (18.5,-3.35) -- (18.5,-2.95);

\node at (19.75,-3) {\Large $\sigma$};

\node at (10,-7) {};
\end{tikzpicture}
\qquad \qquad 
\begin{tikzpicture}[scale=0.6]
\draw[thick,dashed] (7,-6) -- (13,-6);
\draw[thick,dashed] (7,-6) -- (7,0);
\draw[thick,dashed] (7,0) -- (13,0);
\draw[thick,dashed] (13,0) -- (13,-6);

\node at (10,-2.25) {\Large $\sigma$};
\node at (10,-3) {\Large $\times$};
\node at (11,-4) {\Large $e_i$};
\node at (10,-7) {\Large $\sigma(e_i)=e_i$};

\end{tikzpicture}
    \caption{Type (1) orbifold point, on an $F$-patch of surface.}
    \label{fig5_26}
\end{figure}

At an orbifold point on a $\sigma$-defect circle (type (2a) orbifold point) the automorphism label in $\Gal(F/\kk)$ may change from $\sigma_0$ to $\sigma_1$,  see Figure~\ref{fig5_27}. In the corresponding evaluation, only minimal idempotents $e_i$ with $\sigma_0(e_i)=\sigma_1(e_i)$ may contribute.  

\vspace{0.1in}

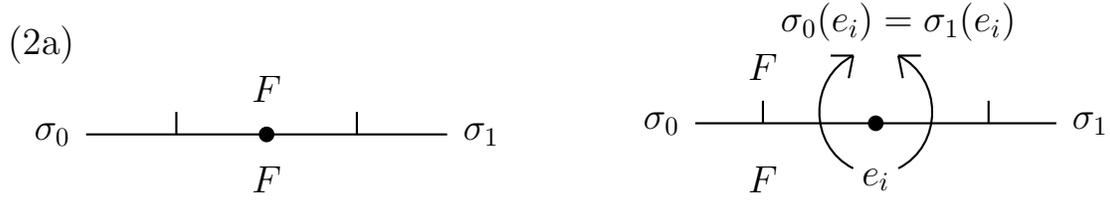
\begin{figure}
    \centering
\begin{tikzpicture}[scale=0.6]

\begin{scope}[shift={(0,0)}]
\draw[thick] (0,0) -- (8,0);
\draw[thick] (2,0) -- (2,0.5);
\draw[thick] (6,0) -- (6,0.5);

\node at (-0.75,0) {\Large $\sigma_0$};
\node at (8.75,0) {\Large $\sigma_1$};

\draw[thick,fill] (4.25,0) arc (0:360:0.25);

\node at (4, 1) {\Large $F$};
\node at (4,-1) {\Large $F$};

\node at (-1,2) {\Large (2a)};
\end{scope}

\begin{scope}[shift={(14,0)}]

\draw[thick] (0,0) -- (8,0);
\draw[thick] (1.5,0) -- (1.5,0.5);
\draw[thick] (6.5,0) -- (6.5,0.5);

\node at (-0.75,0) {\Large $\sigma_0$};
\node at (8.75,0) {\Large $\sigma_1$};

\draw[thick,fill] (4.25,0) arc (0:360:0.25);

\node at (1.00, 1.00) {\Large $F$};
\node at (1.00,-1.00) {\Large $F$};

\node at (4,-1.25) {\Large $e_i$};
\node at (4.5,2.25) {\Large $\sigma_0(e_i)=\sigma_1(e_i)$}; 

\draw[thick,->] (3.5,-1) .. controls (2.5,-0.5) and (3,1) .. (3.5,1.5);

\draw[thick,->] (4.5,-1) .. controls (5.5,-0.5) and (5,1) .. (4.5,1.5);
\end{scope}

\end{tikzpicture}
    \caption{Type (2a) orbifold point on an $(F,F)$-seam of a surface, with field automorphisms different along the seam on the two sides of the point.}
    \label{fig5_27}
\end{figure}

At more general type (2) orbifold point on an $(F,K)$-seam, an embedding $\iota_0: F\xhookrightarrow{} K$ may change to a different embedding $\iota_1: F \xhookrightarrow{} K$, see Figure~\ref{fig5_28} left.  At a type (3) orbifold point, at a vertex of the network, the embedding $\jmath: F_1\xhookrightarrow{} F_3$ corresponding to the Northwest seam may be different from the composition of embeddings $\jmath_0\circ \iota_0$ for the South seam  $\iota_0:F_0\xhookrightarrow{} F_1$ and the Northeast seam $\jmath_0: F_2 \xhookrightarrow{} F_3$, see Figure~\ref{fig5_28} right.

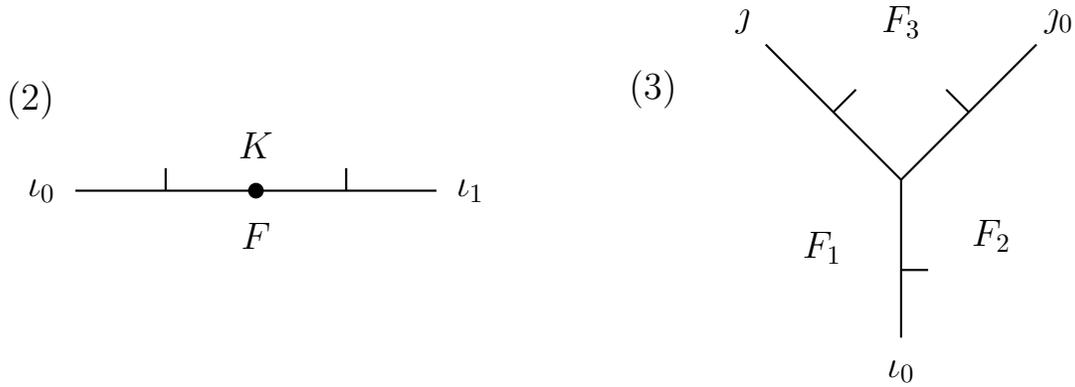
\begin{figure}
    \centering
\begin{tikzpicture}[scale=0.6]

\begin{scope}[shift={(0,2)}]
\draw[thick] (0,0) -- (8,0);
\draw[thick] (2,0) -- (2,0.5);
\draw[thick] (6,0) -- (6,0.5);

\node at (-0.75,0) {\Large $\iota_0$};
\node at (8.75,0) {\Large $\iota_1$};

\draw[thick,fill] (4.25,0) arc (0:360:0.25);

\node at (4, 1) {\Large $K$};
\node at (4,-1) {\Large $F$};

\node at (-1,2) {\Large $(2)$};
\end{scope}

\begin{scope}[shift={(19,0)}]

\node at (-5.0,4.5) {\Large $(3)$};

\draw[thick] (0,0) -- (0,2);
\draw[thick] (0,2) -- (2,4);
\draw[thick] (0,2) -- (-2,4);

\draw[thick] (0,1) -- (0.6,1);

\node at (-1.75,1.5) {\Large $F_1$};
\node at (2.00,1.5) {\Large $F_2$};
\node at (0,4.5) {\Large $F_3$};

\node at (0,-0.75) {\Large $\iota_0$};
\node at (2.5,4.5) {\Large $\jmath_0$};
\node at (-2.5,4.5) {\Large $\jmath$};

\draw[thick] (-1,3) -- (-0.5,3.5);
\draw[thick] (1.0,3) -- (0.5,3.5);

\end{scope}
\end{tikzpicture}

    \caption{Left: type (2) orbifold point, with different embeddings $\iota_0,\iota_1: F \xhookrightarrow{} K$ on the two sides of the seam. Right: type (3) orbifold point, with $\jmath \not= \jmath_0 \iota_0$.}
    \label{fig5_28}
\end{figure}


%
%
%

\section{Foams, Galois extensions, and Sylvester sums}
\label{sec-sylvester}

\subsection{Base change for \texorpdfstring{$\GL(N)$}{GLN} foams and field extensions} 
\label{subsec_base_change}

Consider the ring of polynomials 
\begin{equation}
    R'\ = \ \Z[\alpha_1, \dots, \alpha_N] 
\end{equation}
in variables $\alpha_1, \dots, \alpha_N$, and its 
 subring of symmetric polynomials 
\begin{eqnarray*}
    R &  = &  \Z[\alpha_1,\dots, \alpha_N]^{S_N} \subset R', \qquad R =\Z[E_1,\dots, E_N], \\
    E_k & = & \sum_{i_1<\ldots <i_k} \alpha_{i_1}\dots\alpha_{i_k}, 
\end{eqnarray*}
where $E_k$ is the $k$-th elementary  symmetric function in $\alpha_1,\dots, \alpha_N$. 

\vspace{0.1in}

Most constructions of equivariant $\GL(N)$ link homology, as an intermediate step, associate a free graded $R'$-module $\brak{\Gamma}$ to a planar trivalent graph $\Gamma$ with oriented edges labeled by weights in $\{1,2,\dots,N\}$ subject to the flow constraint that the sum of  weights of out edges equals the sum of weights of in edges at each vertex of $\Gamma$, see Figure~\ref{fig5_29}. There is an extensive literature on $\GL(N)$ homology. We refer to~\cite{KK} for a partial list of references and to~\cite{RW2} for a combinatorial way to define $\brak{\Gamma}$. 

\vspace{0.1in}

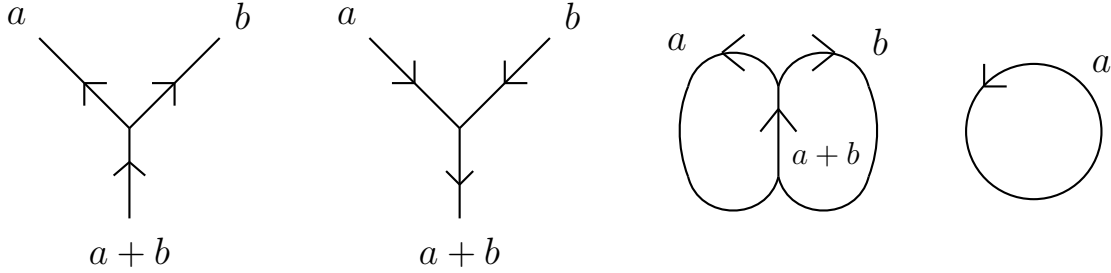
\begin{figure}
    \centering
\begin{tikzpicture}[scale=0.6,decoration={
    markings,
    mark=at position 0.60 with {\arrow{>}}}]

\begin{scope}[shift={(0,0)}]

\draw[thick,postaction={decorate}] (0,0) -- (0,2);
\draw[thick,postaction={decorate}] (0,2) -- (-1.5,4);
\draw[thick,postaction={decorate}] (0,2) -- (1.5,4);

\node at (0,-.75) {\Large $a+b$};

\node at (-2.0,4.5) {\Large $a$};
\node at ( 2.0,4.5) {\Large $b$};
\end{scope}

\begin{scope}[shift={(7.5,0)}]

\draw[thick,postaction={decorate}] ( 0,2) -- (0,0);
\draw[thick,postaction={decorate}] (-1.5,4) -- (0,2);
\draw[thick,postaction={decorate}] ( 1.5,4) -- (0,2);

\node at (0,-.75) {\Large $a+b$};
\node at (-2.0,4.5) {\Large $a$};
\node at (2.0,4.5) {\Large $b$};

\end{scope}

\begin{scope}[shift={(15.0,1)}]

\draw[thick,postaction={decorate}] (0,0) -- (0,2);

\draw[thick] (0,0) .. controls (0.25,-1) and (1.75,-1) .. (2,0);
\draw[thick,postaction={decorate}] (0,2) .. controls (0.25,3) and (1.75,3) .. (2,2);
\draw[thick] (2,0) .. controls (2.25,.65) and (2.25,1.4) .. (2,2);

\draw[thick] (0,0) .. controls (-0.25,-1) and (-1.75,-1) .. (-2,0);
\draw[thick,postaction={decorate}] (0,2) .. controls (-0.25,3) and (-1.75,3) .. (-2,2);
\draw[thick] (-2,0) .. controls (-2.25,.65) and (-2.25,1.4) .. (-2,2);

\node at (0.90,.5) {$a+b$};

\node at (2.25,3) {\Large $b$};

\node at (-2.25,3) {\Large $a$};

\end{scope}

\begin{scope}[shift={(21.5,2)}]

\draw[thick,postaction={decorate}] (1.5,0) arc (0:360:1.5);

\node at (1.5,1.5) {\Large $a$};

\end{scope}

\end{tikzpicture} 

    \caption{Left: split and merge vertices of an MOY graph. Right: simplest MOY graphs, the $(a,b)$-theta graph and thickness $a$ circle $\Gamma_a$.}
    \label{fig5_29}
\end{figure}

A planar graph as above is called a Murakami--Ohtsuki--Yamada (MOY) graph or a \emph{web}. 
The graded rank of $\brak{\Gamma}$ equals the quantum $\mathfrak{gl}(N)$ invariant $P(\Gamma)\in \Z_{+}[q,q^{-1}]$, also known as the Murakami--Ohtsuki--Yamada (MOY) invariant, i.e., see \cite{MOY}. Here $\Z_+:=\{0,1,2,\dots\}$. 

This invariant extend to a link invariant~\cite{MOY}, called the MOY invariant, taking values in $\Z[q,q^{-1}]$. It additionally depends on the labels of the link's components, which are in the range $\{1,\ldots, N\}$. The invariant extends to links by replacing each  crossing in a link's diagram by a suitable linear combination of MOY graphs. 
This link invariant is a special case of the Reshetikhin--Turaev link invariants constructed from quantum deformations of universal enveloping algebras of simple Lie algebras. 

Upon categorification, $P(\Gamma)$ is replaced by a free graded $R$-module $\brak{\Gamma}$ of graded rank $P(\Gamma)$. 
One can refer to $\brak{\Gamma}$ as the \emph{homology} or \emph{state space} of $\Gamma$.

\vspace{0.1in} 

The homology groups of a link are obtained as a complex built out of state spaces $\brak{\Gamma}$ for various MOY graphs $\Gamma$ given by taking a planar projection $D$ of a link  and substituting certain elementary subgraphs in place of crossings of $D$. 

\vspace{0.1in} 

For the empty web $\emptyset$ the associated module is $R$, $\brak{\emptyset}\cong R$, and the MOY invariant is $P(\emptyset)=1$. 

\vspace{0.1in}

Denote by $\Gamma_a$ the MOY graph which is a circle labeled $a$, $1\le a \le N$, see Figure~\ref{fig5_29} right. Then $\brak{\Gamma_a}$ can be canonically identified with the subring 
\begin{equation}
    R_{a,N-a} \ = \ \Z[\alpha_1,\dots, \alpha_N]^{S_a\times S_{N-a}}.
\end{equation}
Here, $S_a\times S_{N-a}\subset S_N$ is the parabolic subgroup for the decomposition $(a,N-a)$, separately permuting the first $a$ variables and the last $N-a$ variables.

\vspace{0.1in}

The web $\Gamma_N$, a circle of thickness $N$, has the state space isomorphic to $R$, so that $\brak{\Gamma_N}\cong \brak{\emptyset}$. In general, with a minimal amount of effort and little loss of information (there are subtleties, but these will not play any role for us), lines labeled $N$ can be hidden (erased) from MOY diagrams. This corresponds to passing from $\GL(N)$ to $\SL(N)$ link homology. However, it is often convenient to keep these lines.

\vspace{0.1in}

When $a=1$, we can also identify 
\begin{equation}
    \ang{\Gamma_1} \ \cong \ R[X]/(X^N-E_1X^{N-1} + \ldots +(-1)^N E_N). 
\end{equation}

More generally, choose a sequence 
$\unda=(a_1, \ldots, a_k)$ of positive integers that add up to $N$, with $a_1 + \ldots + a_k = N$ and $k\geq 1$, and consider the graph $\Gamma_{\unda}$ in Figure~\ref{fig5_30} left.

\vspace{0.1in}

\begin{figure}[ht]
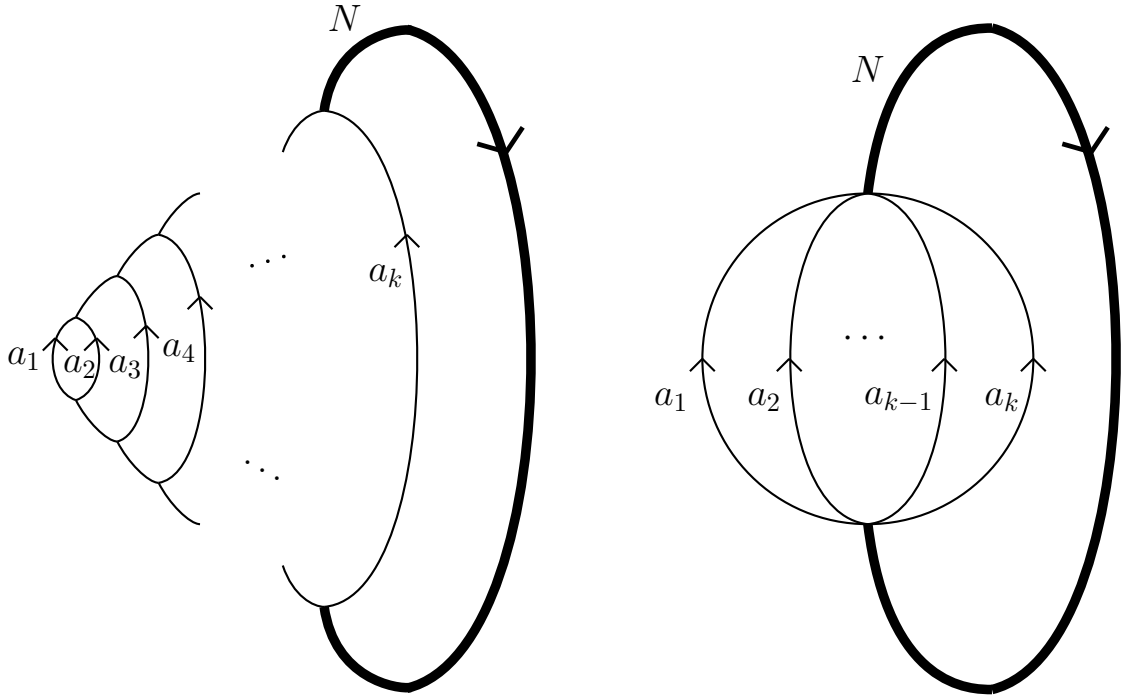

    \centering

    \caption{Left: $\GL(N)$ web $\Gamma_{\unda}$. Lines of thickness $a_1, \dots, a_k$ merge into a line of thickness $N$. Right: a schematic way to depict this web, with $k$ lines merging at once into the $N$-line.  Changing the order of merges of lines results in webs with canonically isomorphic state spaces.}
    \label{fig5_30}
\end{figure}

In this web lines of weight $a_1,a_2,\dots, a_k$ merge into thicker and thicker lines, eventually merging into a line of thickness $N$ that goes around and then splits off into the original lines. The state space $\brak{\Gamma_{\unda}}$ does not depend on the order in which the $k$ lines merge and the graph can be denoted as in Figure~\ref{fig5_30} right, where the order of merge is not specified. 

The value of the quantum MOY invariant on the graph $\Gamma_{\unda}$ is the $q$-multinomial coefficient 
\begin{equation}
     P(\Gamma_{\unda}) \ = \ 
     \left[ \begin{matrix}
     N \\ a_1,\dots, a_k \end{matrix} \right]_q \ := \ \dfrac{[N]!}{[a_1]!\dots [a_k]!},
\end{equation}
where 
\[
[m]!\ :=\ [m][m-1]\dots [1], \quad [m]=\frac{q^m-q^{-m}}{q-q^{-1}}.
\]
The state space $\brak{\Gamma_{\unda}}$ is a free $R$-module of graded rank $P(\Gamma_{\unda})$. 

\vspace{0.1in} 

When doing quantum $\SL(N)$ homology or $\SL(N)$ MOY invariants, lines of thickness $N$ may be erased and lines of thickness $N-a$ converted to those of thickness $a$ with the opposite orientation. This procedure does not change the value of the MOY invariant, and can be made to preserve homology groups. In this case,  Figure~\ref{fig5_30} graphs may be reduced by erasing thickness $N$ interval and sometimes further simplifying, see Figure~\ref{fig5_31}.  

\vspace{0.1in}

\begin{figure}
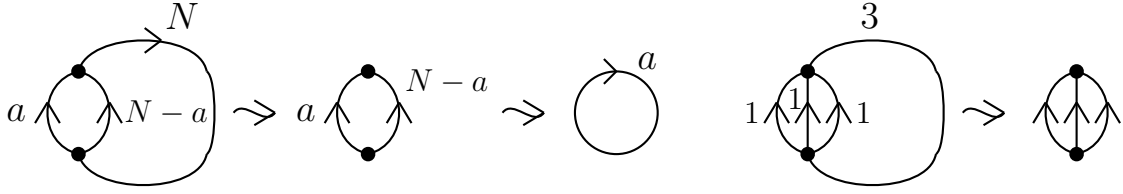

    \centering


    \caption{Left: graph $\Gamma_{(a,N-a)}$ turns into $\Gamma_a$ upon reducing from $\GL(N)$ to $\SL(N)$ homology by erasing the interval of thickness $N$. Right: Reducing $\GL(3)$ graph $\Gamma_{(1,1,1)}$ to the corresponding $\SL(3)$ graph, with all edges of the latter labeled $1$. In the $\SL(3)$ case, MOY graphs are equivalent to Kuperberg's $A_2$ spiders, see \cite{Ku}. }
    \label{fig5_31}
\end{figure}

Let $G=\GL(N,\C)$ or its maximal compact subgroup $G=\U(N)$, with the standard action on $\CC^N$. Consider the induced action of $G$ on the (partial) flag variety  \begin{equation}
\Fl(\underline{a}) \ := \ \{ 0\subset L_1\subset L_2\subset \ldots \subset  L_k\cong\CC^N \  |\  \dim(L_i)-\dim(L_{i-1})=a_i, \  i=1,\dots, k\},
\end{equation}
where $L_0 = 0$. 
The equivariant cohomology $\mthH_G(\Fl(\underline{a}))$ is naturally a module over the equivariant cohomology of a point $\mthH_G(\ast)\cong R$. There is a natural isomorphism of $R$-algebras
\begin{equation}
    \brak{\Gamma_{\unda}} \ \cong \  \mthH_G(\Fl(\underline{a})) \ \cong \ R_{\unda},
\end{equation}
where
\begin{equation}
    R_{\unda} \ := \ \Z[x_1,\dots, x_N]^{S_{\unda}}, \qquad  
    S_{\unda}\  := \ S_{a_1}\times \dots \times S_{a_k}\subset S_N,
\end{equation}
is the subring of invariants for the parabolic subgroup $S_{\unda}$ of $S_N$ acting on the ring of polynomials in $N$ variables. 

\vspace{0.1in} 

In the special case $\unda=(1,\dots, 1)=(1^N)$, the parabolic subgroup is trivial and 
\begin{equation}
    R_{(1^N)}= \Z[x_1,\dots, x_N] \cong \mthH_G(\Fl((1^N)))
\end{equation}
is isomorphic to the polynomial ring and to the equivariant cohomology of the full flag variety $\Fl((1^N))$, which we can also denote $\FF(N)$. 

\vspace{0.1in} 

The state spaces $\brak{\Gamma}$ are functorial, in a suitable sense. A graph cobordism $F$, which is a decorated combinatorial two-dimensional $CW$-complex with prescribed singularities~\cite{RW1} and embedded in $\R^2\times [0,1]$, also called a \emph{foam} or \emph{$\GL(N)$-foam}, induces a homomorphism of state spaces
\begin{equation} 
   \brak{F}\ : \ \brak{\partial_0 F}\lra \brak{\partial_1 F}.
\end{equation} 
Together, these homomorphisms form a functor from the category of $\GL(N)$-foams to the category of graded $R$-modules. 

\vspace{0.1in}

Suppose that a web $\Gamma$ admits a reflection symmetry about an axis $\ell$. Then $\brak{\Gamma}$ is naturally a unital associative Frobenius $R$-algebra, due to the presence of unit $\iota$, counit $\varepsilon$ and multiplication $m$ cobordisms as schematically shown in Figure~\ref{fig6_2} for the so-called $\Theta$-web, resembling the letter $\Theta$ (orientations and weights of edges are omitted for simplicity). The cobordisms $\iota,\varepsilon$ match halves of $\brak{\Gamma}$  by rotating one half into the other. The cobordism $m$ matches two halves of $\Gamma$ in $\Gamma\sqcup \Gamma$, leaving a single $\Gamma$ as the other boundary of the cobordism.  

\vspace{0.1in}

\begin{figure}
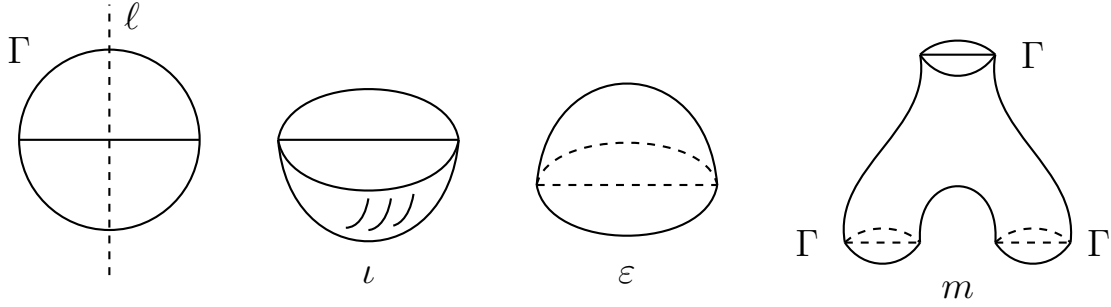

    \centering


\caption{A web $\Gamma$ with a symmetry axis $\ell$ and schematically depicted unit, counit, and multiplication morphisms. }
    \label{fig6_2}
\end{figure}

A homomorphism of commutative rings 
\begin{equation}
    \phi \ : \ R \lra S,
\end{equation}
where $S$ is not necessarily graded, 
can be used to define a version $\brak{\Gamma}_S$ of state spaces, a kind of base change from $R$ to $S$, such that $\brak{\Gamma}_S$ is a free  $S$-module of rank   $P(\Gamma)_{q=1}$ for any MOY graph $\Gamma$. Here $P(\Gamma)_{q=1}$ is the specialization of the Laurent polynomial $P(\Gamma)\in \Z_{+}[q,q^{-1}]$ to its value at $q=1$.

\vspace{0.1in} 

Due to all modules being free, one way to define it is by 
\begin{equation}
\brak{\Gamma}_S \ := \ \brak{\Gamma}\otimes_R S.  
\end{equation} 
A more intrinsic way to define $\brak{\Gamma}_S$ is via $S$-valued closed foam evaluation that uses $\phi$, see~\cite[Section~4]{KR1}    for a similar definition in a different case where the state spaces are not known to be free modules over the ground ring. 

\vspace{0.1in} 

Consider now a special case when the ground ring $S=\kk$ is a field and we pick a separable polynomial 
\begin{equation}
\label{eqn_separable_poly}
    f(x)=x^N+ u_{N-1}x^{N-1}+\ldots + u_0, \ \ u_i\in \kk, \ \  i=1,\dots, N-1, 
\end{equation}
irreducible over $\kk$. Let $K$ be a splitting field of $f(x)$ over $\kk$ and $F$ be the field 
\begin{equation}
    F \ := \ \kk[\alpha]/(f(\alpha)). 
\end{equation}
The polynomial $f(x)$ has $N$ roots $\alpha_1,\dots, \alpha_N \in K$, and each of them defines a homomorphism of $\kk$-algebras $F\lra K$. 

\vspace{0.1in} 

Consider the homomorphism 
\begin{equation}
\label{eq_hom_phi}
    \phi \ : \ R\lra \kk, \  \ \phi(E_i)= (-1)^i u_i. 
\end{equation}
and state spaces $\brak{\Gamma}_{\phi}$ associated to MOY graphs via the foam construction. The state spaces of the  empty graph and the $N$-circle are isomorphic to $\kk$, 
\begin{equation}
    \brak{\emptyset}_{\phi}\cong \brak{\Gamma_N}\cong \kk.
\end{equation}
The state space of the $1$-circle is isomorphic to $F$, 
\begin{equation}\label{eq_1_circ}
    \brak{\Gamma_1}\cong F,
\end{equation}
via a homomorphism that take a one-dotted disk with boundary $\Gamma_1$ to $\alpha$ (facets of foam may be labeled by symmetric functions in the number of variables equal to the thickness of the  facet). The state space $\Gamma_1$ is a free $\kk$ module with the basis of disks with $i$ dots, $i=0,\dots, N-1$, see Figure~\ref{fig6_1}. The corresponding basis of $F$ is that of powers of $\alpha$, $\{ 1, \alpha, \dots, \alpha^{N-1} \}$. 

\vspace{0.1in}

\begin{figure}
    \centering
\begin{tikzpicture}[scale=0.6]

\begin{scope}[shift={(0,0)}]

\draw[thick] (0,0) .. controls (0.15,1) and (2.85,1) .. (3,0);
\draw[thick] (0,0) .. controls (0.15,-1) and (2.85,-1) .. (3,0);
\draw[thick] (0,0) .. controls (.25,-2.5) and (2.75,-2.5) .. (3,0);
\node at (1.5,-2.85) {\Large $1$};
\end{scope}

\begin{scope}[shift={(6,0)}]

\draw[thick] (0,0) .. controls (0.15,1) and (2.85,1) .. (3,0);
\draw[thick] (0,0) .. controls (0.15,-1) and (2.85,-1) .. (3,0);
\draw[thick] (0,0) .. controls (.25,-2.5) and (2.75,-2.5) .. (3,0);
\draw[thick,fill] (2.35,-1.15) arc (0:360:0.25);

\node at (1.65,-2.90) {\Large $\alpha$};

\node at (4.5,-1) {\Large $\ldots$};
\end{scope}

\begin{scope}[shift={(12,0)}]

\draw[thick] (0,0) .. controls (0.15,1) and (2.85,1) .. (3,0);
\draw[thick] (0,0) .. controls (0.15,-1) and (2.85,-1) .. (3,0);
\draw[thick] (0,0) .. controls (.25,-2.5) and (2.75,-2.5) .. (3,0);
\draw[thick,fill] (2.35,-1.15) arc (0:360:0.25);
\node at (3.0,-1.25) {$i$};

\node at (1.8,-2.75) {\Large $\alpha^i$};

\node at (5,-1) {\Large $\ldots$};
\end{scope}

\begin{scope}[shift={(19,0)}]

\draw[thick] (0,0) .. controls (0.15,1) and (2.85,1) .. (3,0);
\draw[thick] (0,0) .. controls (0.15,-1) and (2.85,-1) .. (3,0);
\draw[thick] (0,0) .. controls (.25,-2.5) and (2.75,-2.5) .. (3,0);
\draw[thick,fill] (2.35,-1.15) arc (0:360:0.25);
\node at (3.9,-1.25) {$N-1$};

\node at (2,-2.75) {\Large $\alpha^{N-1}$};
\end{scope}

\end{tikzpicture}

    \caption{Basis of powers of a dot  (powers of $\alpha$) in $\brak{\Gamma_1}$.  }
    \label{fig6_1}
\end{figure}

 There is a surjective homomorphism 
 \begin{equation}
    \phi_1 \ : \  R_{(1^N)} \lra K , \ \ 
    \phi_1(x_i) = \alpha_i, \ \  
    i=1,\ldots, N, 
 \end{equation}
 into the splitting field $K$ that extends the homomorphism $\phi$, so the square below commutes
 

\begin{center}
{\Large
\begin{tikzcd}
 R_{(1^N)}\arrow{r}{\phi_1} &   K \\
 R \arrow[hookrightarrow]{u} \arrow{r}[swap]{\phi} &  \kk \arrow[hookrightarrow]{u}. \\
\end{tikzcd}
}
\end{center}

 Recall that the source ring of $\phi_1$ is the state space of $\Gamma_{(1^N)}$, which consists of $N$ weight $1$ lines that merge and split into an $N$-line.

 \begin{prop} The map $\phi_1$ induces a surjective homomorphism of $\kk$-algebras 
 \begin{equation}
     \widetilde{\phi} \ : \ \brak{\Gamma_{(1^N)}}_{\phi} \lra K.
 \end{equation}
 This map is an isomorphism if and only if the Galois group of the splitting field extension $K/\kk$ is the symmetric group $S_N$. 
 \end{prop}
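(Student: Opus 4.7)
The plan is to reduce the statement to an explicit identification of state spaces and a dimension count. By the general theory of $\GL(N)$-foams recalled above, $\brak{\Gamma_{(1^N)}}$ is canonically identified with $R_{(1^N)}=\Z[x_1,\ldots,x_N]$ as a graded $R$-algebra, where $R=\Z[E_1,\ldots,E_N]$ sits inside via the elementary symmetric functions $E_i=e_i(x_1,\ldots,x_N)$. A classical result of Artin says that $R_{(1^N)}$ is a free $R$-module of rank $N!$, with basis the Schubert-type monomials $x_1^{i_1}\cdots x_N^{i_N}$, $0\le i_k\le N-k$. Base changing along $\phi\colon R\to\kk$ therefore yields
\begin{equation*}
\brak{\Gamma_{(1^N)}}_{\phi}\;\cong\; R_{(1^N)}\otimes_R\kk \;\cong\; \kk[x_1,\ldots,x_N]\big/\bigl(E_i(x)-\phi(E_i)\bigr)_{i=1}^N,
\end{equation*}
a $\kk$-algebra of dimension $N!$.

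First I would check that the substitution $x_i\mapsto\alpha_i$ descends to a $\kk$-algebra homomorphism $\widetilde{\phi}$ from this quotient into $K$. This is just the observation that the factorization $f(x)=\prod_{i=1}^N(x-\alpha_i)$ in $K[x]$ forces $E_i(\alpha_1,\ldots,\alpha_N)=\phi(E_i)$ (by the very choice \eqref{eq_hom_phi}), so the defining relations of the source map to zero in $K$. Compatibility of $\widetilde\phi$ with $\phi_1$ through the quotient $R_{(1^N)}\twoheadrightarrow \brak{\Gamma_{(1^N)}}_\phi$ is then automatic.

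Next, surjectivity of $\widetilde\phi$ is immediate: its image contains every root $\alpha_i$, and $K=\kk(\alpha_1,\ldots,\alpha_N)$ by definition of the splitting field.

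Finally, the iff statement reduces to a dimension comparison. Both source and target are finite-dimensional $\kk$-vector spaces, so $\widetilde\phi$ is an isomorphism iff $\dim_\kk\brak{\Gamma_{(1^N)}}_{\phi}=\dim_\kk K$, i.e., iff $[K:\kk]=N!$. Since $K/\kk$ is Galois (splitting field of a separable polynomial), the embedding $\Gal(K/\kk)\hookrightarrow S_N$ given by the action on roots satisfies $|\Gal(K/\kk)|=[K:\kk]\le N!$, with equality precisely when $\Gal(K/\kk)\cong S_N$. The only nontrivial input is the rank $N!$ of $R_{(1^N)}$ as a free $R$-module; everything else is formal.
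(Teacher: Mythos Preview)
Your proof is correct and matches the paper's (largely implicit) argument. The paper does not give a separate proof for this proposition; it simply remarks that $\widetilde\phi$ is an isomorphism exactly when $[K:\kk]=N!$, and a few lines later records the key fact you use, that $\brak{\Gamma_{(1^N)}}_{\phi}$ is a commutative $\kk$-algebra of dimension $N!$. Your write-up makes explicit the identification $\brak{\Gamma_{(1^N)}}_{\phi}\cong\kk[x_1,\ldots,x_N]/(E_i-\phi(E_i))$, the descent of $\phi_1$, surjectivity, and the dimension comparison via $|\Gal(K/\kk)|=[K:\kk]$, which is exactly what the paper leaves to the reader.
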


\proof
The map $\phi_1$ induces a surjective homomorphism $\widetilde{\phi}$ of $\kk$-algebras since $f$ is a separable polynomial~\eqref{eqn_separable_poly} of degree $N$.
Galois groups are isomorphic to subgroups of symmetric groups, so $\widetilde{\phi}$ is an isomorphism if and only if $K$ is a splitting field of a separable polynomial.
\endproof

 Thus, $\widetilde{\phi}$ is an isomorphism if the splitting field extension $K/\kk$ has the largest possible degree $N!$ given that $\deg(f)=N$. 
 
 \vspace{0.1in} 
 
 Recall that the extension $K/\kk$ is Galois and there is a bijection between intermediate subfields of $K/\kk$ and subgroups of the Galois group $\Gal(K/\kk)$. 

\vspace{0.1in} 
 
Assuming that the Galois group is the largest possible given that $f$ has degree $N$, we can understand the state spaces of webs $\brak{\Gamma_{\unda}}_{\phi}$ for all decompositions $\unda$ via a part of the Galois correspondence. 
 
\begin{prop} \label{prop_gal_int} Suppose that $\Gal(K/\kk)=S_N$. Then 
for each decomposition $\unda$ of $N$, there is a ring isomorphism
\begin{equation}
    \brak{\Gamma_{\unda}} \cong K^{S_{\unda}}
\end{equation}
between the $\phi$-state  space of the web $\Gamma_{\unda}$ and the intermediate subfield $K^{S_{\unda}}$.
\end{prop}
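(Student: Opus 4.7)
The plan is to build a natural ring map from $\brak{\Gamma_{\unda}}_\phi$ to $K^{S_{\unda}}$ using the data of the preceding proposition, and then to show it is an isomorphism by a combination of dimension count and Galois correspondence. First I would identify the state space via the base change description recalled in the text:
\[
\brak{\Gamma_{\unda}}_\phi \ \cong \ R_{\unda}\otimes_R \kk, \qquad R_{\unda}=\Z[x_1,\ldots,x_N]^{S_{\unda}}.
\]
Since the surjection $\phi_1\colon R_{(1^N)}\twoheadrightarrow K$ of the preceding proposition extends $\phi$, its restriction to $R_{\unda}$ lands in the $S_{\unda}$-fixed subfield $K^{S_{\unda}}$ (as $\phi_1$ is $S_N$-equivariant for the tautological actions), and the compatibility $\phi_1|_R=\phi$ guarantees that this restriction descends to a well-defined $\kk$-algebra homomorphism
\[
\psi_{\unda}\ \colon\ \brak{\Gamma_{\unda}}_\phi \ \lra \ K^{S_{\unda}}.
\]

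Next I would compare dimensions on both sides. On the source, $\brak{\Gamma_{\unda}}_\phi$ is free over $\kk$ of rank equal to the $q{=}1$ specialization of the MOY polynomial, namely the multinomial coefficient $\binom{N}{a_1,\ldots,a_k}$. On the target, the Galois correspondence applied to the hypothesis $\Gal(K/\kk)=S_N$ yields $[K^{S_{\unda}}:\kk]=[S_N:S_{\unda}]=\binom{N}{a_1,\ldots,a_k}$. Hence both are finite-dimensional $\kk$-vector spaces of the same dimension, and it suffices to prove $\psi_{\unda}$ is surjective.

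For surjectivity, which is the heart of the argument, I would proceed as follows. Let $L\subset K^{S_{\unda}}$ denote the image of $\psi_{\unda}$. As a $\kk$-subalgebra of a finite field extension, $L$ is automatically a subfield (any $\kk$-subalgebra of a finite-dimensional field extension of $\kk$ is closed under inverses). Grouping the variables as $B_i=\{x_{a_1+\cdots+a_{i-1}+1},\ldots,x_{a_1+\cdots+a_i}\}$, one observes that $R_{\unda}$ contains the elementary symmetric polynomials in each block $B_i$, so $L$ contains the coefficients of the factor $\prod_{j\in B_i}(x-\alpha_j)$ for every $i$. Consequently any element of $\Gal(K/L)\subset S_N$ must permute the roots $\{\alpha_j:j\in B_i\}$ within each block, which forces $\Gal(K/L)\subset S_{\unda}$. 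Invoking the Galois correspondence in the opposite direction gives $K^{S_{\unda}}\subset L$, so $L=K^{S_{\unda}}$ and $\psi_{\unda}$ is surjective, hence an isomorphism.

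The step I expect to require the most care is the Galois-descent surjectivity: one must be precise about why $L$ is a field (so that the Galois correspondence applies), and about the equivariance of $\phi_1$ needed to conclude that any automorphism fixing $L$ preserves each block of roots setwise. The remaining ingredients — well-definedness of $\psi_{\unda}$ via $\phi_1|_R=\phi$, and the matching dimension counts on both sides — are straightforward given the previous proposition and the standard Galois correspondence for $K/\kk$ with group $S_N$.
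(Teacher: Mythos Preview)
Your argument is correct and is exactly the natural route the paper has in mind; the paper itself does not write out a proof, stating only that ``the proof is straightforward and left to the reader.'' Your dimension-plus-surjectivity strategy via the restriction of $\phi_1$ to $R_{\unda}$, together with the Galois-correspondence computation $[K^{S_{\unda}}:\kk]=[S_N:S_{\unda}]$, is the expected argument, and your care about why the image $L$ is a field and why $\Gal(K/L)\subset S_{\unda}$ fills in precisely the details the paper omits.
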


\proof
The proposition follows by looking at subrings in $K$ of symmetric functions for the corresponding roots for each strand of thickness $a_1,\dots, a_k$ (symmetric functions in subsets of linear functions $x-\alpha_i$), which implies that 
$ \brak{\Gamma_{\unda}}$ is exactly the subfield of $K$ of $S_{\unda}$-invariant elements. 
\endproof

Inclusions of subfields as well as trace maps between different subfields correspond to foams that merge and split lines in these webs, corresponding to combining to consecutive parts of $\unda$ or splitting a part into two parts, 
\begin{equation*}
    (\dots, a_{i-1},a_i,a_{i+1},a_{i+2},\dots ) \leftrightarrow 
    (\dots, a_{i-1},a_i+a_{i+1},a_{i+2},\dots ). 
\end{equation*}

Thus, state spaces for theta-like webs $\Gamma_{\unda}$ correspond to subfields for the parabolic subgroups $S_{\unda}$. In this correspondence we do not encounter all intermediate subfields but only those that come from ``flattening" or ordering the set of roots of $f$ and can be matched to decompositions $S_{\unda}$, see Proposition~\ref{prop_gal_int}.  

\vspace{0.1in} 

For the partition $(1,N-1)$ the state space 
\begin{equation*}
    \brak{\Gamma_{(1,N-1)}} \ \cong F,
\end{equation*}
also see (\ref{eq_1_circ}) and Figure~\ref{fig5_31} left for  $a=1$.

\vspace{0.1in} 

We encounter the ground field $\kk$, field $F$, the splitting field $K$ as well as intermediate fields for the parabolic subgroups as $\phi$-state spaces of theta-like webs. These webs can be thought of as bubbling off an $N$-line or $N$-circle, see Figure~\ref{fig6_3}. 

\vspace{0.1in}

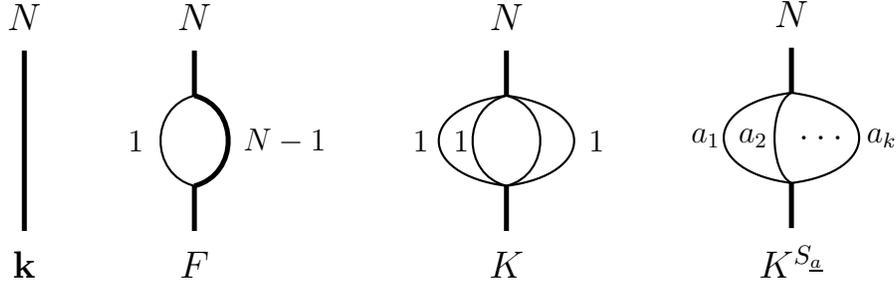
\begin{figure}
    \centering
\begin{tikzpicture}[scale=0.6]

\begin{scope}[shift={(0,0)}]

\draw[line width=0.027in] (0,0) -- (0,4);
\node at (0,-.75) {\Large $\kk$};
\node at (0,4.75) {\Large $N$};
\end{scope}

\begin{scope}[shift={(5,0)}]
\draw[line width=0.027in] (0,0) -- (0,1);
\draw[line width=0.027in] (0,3) -- (0,4);

\draw[line width=0.027in] (0,1) .. controls (1,1.25) and (1,2.75) .. (0,3);
\draw[thick] (0,1) .. controls (-1,1.25) and (-1,2.75) .. (0,3);

\node at (-1.3,2) {\large $1$};
\node at (2,2) {\large $N-1$};

\node at (0,-.75) {\Large $F$};
\node at (0,4.75) {\Large $N$};
\end{scope}

\begin{scope}[shift={(13.5,0)}]
\draw[line width=0.027in] (0,0) -- (0,1);
\draw[line width=0.027in] (0,3) -- (0,4);

\node at (1.9,2) {\large $1$};

\node at (1,2) {\large $1$};

\draw[thick] (0,1) .. controls (2,1.25) and (2,2.75) .. (0,3);

\draw[thick] (0,1) .. controls (1,1.25) and (1,2.75) .. (0,3);

\node at (-1,2) {\large $1$};

\draw[thick] (0,1) .. controls (-1,1.25) and (-1,2.75) .. (0,3);

\node at (0.00,2) {\Large $\ldots$};

\node at (-1.9,2) {\large $1$};

\draw[thick] (0,1) .. controls (-2,1.25) and (-2,2.75) .. (0,3);

\node at (0,-.75) {\Large $K$};
\node at (0,4.75) {\Large $N$};
\end{scope}

\begin{scope}[shift={(21.0,0)}]

\draw[line width=0.027in] (0,0) -- (0,1);
\draw[line width=0.027in] (0,3) -- (0,4);

\node at (2,2) {\large $a_k$};

\draw[thick] (0,1) .. controls (2,1.25) and (2,2.75) .. (0,3);

\node at (.65,2) {\Large $\ldots$};


\node at (-.85,2) {\large $a_2$};

\draw[thick] (0,1) .. controls (-.5,1.25) and (-.5,2.75) .. (0,3);

\node at (-1.9,2) {\large $a_1$};

\draw[thick] (0,1) .. controls (-2,1.25) and (-2,2.75) .. (0,3);

\node at (0,-.75) {\Large $K^{S_{\underline{a}}}$};
\node at (0,4.75) {\Large $N$};
\end{scope}

\end{tikzpicture}
    
    \caption{Basic webs along an $N$-line with field extensions of $\kk$ as state spaces. The top label is the thickness of the edge, and the bottom label is the corresponding field. }
    \label{fig6_3}
\end{figure}

When $[K:\kk]<N!$, the Galois group is a proper subgroup of $S_N$. For each permutation $s$ of $N$ roots of $f(x)$ in $K$ there is a surjective map 
\begin{equation}
   \phi_s \ : \  \kk[x_1,\dots, x_N] \lra K, \qquad  \phi_s(x_i) = x_{s(i)}, \ \  i=1,\dots, N
\end{equation}
that lifts homomorphism 
\begin{equation}
\phi_S: R\otimes_{\Z} \kk \cong \kk[E_1,\dots,E_N] \lra \kk, \qquad  \phi_S(E_i\otimes 1)= (-1)^i u_i,
\end{equation}
(similar to homomorphism (\ref{eq_hom_phi})). Map $\phi_s$ factors through a homomorphism
\begin{eqnarray*}
    \psi_s  & : & \brak{\Gamma_{(1^N)}}_{\phi} \lra K, \\
    \phi_s  &  : & \kk[x_1,\dots, x_N] \stackrel{\gamma}{\lra} \brak{\Gamma_{(1^N)}}_{\phi} \stackrel{\psi_s}{\lra} K,
\end{eqnarray*}
where $\gamma$ is the canonical quotient map, sending $x_i$ to the unit element cobordism into $\Gamma_{(1^N)}$ decorated by a dot on the $i$-th thin disk. 

\vspace{0.1in} 

For two $s$ that differ by an element of $\Gal(K/\kk)$ the two homomorphisms are related by an automorphism of $K$. Choose representatives $s_1, \dots, s_m$ of left cosets of $\Gal(K/\kk)$ acting on roots $\alpha_1,\dots, \alpha_N\in K$ of $f(x)$. Here $m=N!/[K:\kk]$ is also the index of $\Gal(K/\kk)$ as a subgroup of $S_N$ of all permutations of roots of $f(x)$.  
Each of these representatives determines a surjective 
homomorphism 
\begin{equation}
    \psi_{s_i}  \ : \ \brak{\Gamma_{(1^N)}}_{\phi} \lra K, \qquad  i=1,\dots, m.
\end{equation}

Note that $\brak{\Gamma_{(1^N)}}_{\phi}$ is a commutative $\kk$-algebra of dimension $N!$ and a quotient of $F\otimes_{\kk}F\otimes \dots \otimes F=F^{\otimes N}$. Consequently, it is a commutative semisimple $\kk$-algebra (since $\kk\subset F$ is a separable extension) and necessarily a direct product of field extensions of $\kk$. The product of homomorphisms
\begin{equation}\label{eq_gamma_iso}
\begin{tikzcd}
 \brak{\Gamma_{(1^N)}}_{\phi}  \arrow{r}{(\psi_i)_{i=1}^m} &  \displaystyle{\prod_{i=1}^m} K 
\end{tikzcd}
\end{equation}
is easily seen to be surjective and then necessarily an isomorphism.

\begin{prop}
 There is an isomorphism of algebras 
 \begin{equation}
     \brak{\Gamma_{(1^N)}}_{\phi} \ \cong \ K^{\times m}, \qquad  m = N!/[K:\kk], 
 \end{equation}
 given by (\ref{eq_gamma_iso}),
 between $\phi$-state space of $(1^N)$ theta web and the  direct product of $m$ copies of $K$, where $m$ is the index of the Galois group $\Gal(K/\kk)$ in $S_N$. 
\end{prop}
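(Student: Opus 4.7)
The plan is to identify $\brak{\Gamma_{(1^N)}}_{\phi}$ explicitly as the quotient algebra
\[
A \ := \ \kk[x_1,\ldots,x_N]/J, \qquad J=(E_i(x)-(-1)^{i}u_i : 1\le i\le N),
\]
and then run a Chinese Remainder argument. First I would check, using the foam/state-space description, that $A$ is a free $\kk$-module of dimension $N!$, since the basis of $\brak{\Gamma_{(1^N)}}_\phi$ comes from placing powers $x_j^{a_j}$ with $0\le a_j\le N-j$ on the $N$ thin disks, while the defining relations of the base change $\phi(E_i)=(-1)^iu_i$ correspond exactly to the ideal $J$. In the same step I would observe that the canonical surjection $\gamma:\kk[x_1,\dots,x_N]\twoheadrightarrow A$ sending $x_j$ to the dotted disk factors $\phi_s$ through $A$ for every $s\in S_N$, producing the algebra maps $\psi_s$ and, in particular, the $m$ maps $\psi_{s_i}$.

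Next I would show that each $\psi_{s_i}$ is a surjection onto $K$ with kernel a maximal ideal, and that these $m$ maximal ideals are pairwise distinct. Surjectivity of $\psi_{s_i}$ is immediate from $\psi_{s_i}(x_j)=\alpha_{s_i(j)}$ together with the fact that the $\alpha_j$ generate $K$ over $\kk$. Distinctness uses the choice of coset representatives: if $\ker(\psi_{s_i})=\ker(\psi_{s_j})$ then $\psi_{s_i}$ and $\psi_{s_j}$ would differ by an automorphism of $K$ over $\kk$, forcing $s_j^{-1}s_i$ to lie in the image of $\Gal(K/\kk)\hookrightarrow S_N$, contradicting $s_i$ and $s_j$ representing different left cosets. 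Thus the $\ker(\psi_{s_i})$ are pairwise coprime maximal ideals of $A$.

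The Chinese Remainder theorem then gives a surjective $\kk$-algebra homomorphism
\[
A \ \twoheadrightarrow \ \prod_{i=1}^m A/\ker(\psi_{s_i}) \ \cong \ K^{\times m},
\]
and comparing $\kk$-dimensions, $\dim_\kk A = N! = m\cdot [K:\kk] = \dim_\kk K^{\times m}$, forces it to be an isomorphism; this is precisely the map in \eqref{eq_gamma_iso}. The main obstacle I anticipate is the bookkeeping step of identifying $\brak{\Gamma_{(1^N)}}_\phi$ with $A$ and verifying that $\dim_\kk A = N!$ intrinsically (rather than reading it off the foam evaluation): one clean way is to extend scalars to $\overline{\kk}$, where $A\otimes_\kk\overline{\kk}\cong \prod_{s\in S_N}\overline{\kk}$ since the roots $\alpha_1,\dots,\alpha_N$ of $f$ are distinct by separability, so the fiber of $\Spec A$ over $\Spec\overline{\kk}$ consists of the $N!$ orderings of the roots. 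This simultaneously confirms the dimension count and the semisimplicity of $A$, and it gives the conceptual picture that the $m$ factors in $K^{\times m}$ correspond to $\Gal(K/\kk)$-orbits of orderings of the roots of $f(x)$.
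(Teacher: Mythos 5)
Your proposal is correct and follows essentially the same route as the paper: identify the state space as the rank-$N!$ algebra obtained by base change, observe that the $\psi_{s_i}$ are surjections onto $K$, and conclude by a dimension count. The paper leaves the surjectivity of the product map $(\psi_{s_i})_{i=1}^m$ as "easily seen"; your CRT argument via pairwise distinct maximal ideals (distinct because the $s_i$ lie in different cosets of $\Gal(K/\kk)$ in $S_N$) is exactly the detail being elided, so there is nothing genuinely different here — only a harmless left-versus-right coset bookkeeping slip in writing $s_j^{-1}s_i$ rather than $s_is_j^{-1}$.
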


It is a reasonable question whether the above observations can be developed into something of interest to number theory or algebraic geometry, with the caveat that the Galois correspondence, that we see above in connection with webs and foams, is about 200 years old. One can ask whether it make sense to assign a commutative ring $A$ to a line and \'etale extensions $B$ of $A$ to webs $\Gamma$ that ``bubble off" that line, additionally admitting a symmetry axis, so that the state space of web $\Gamma$ is a ring $B$. Can \'etale cohomology be then connected to some version of foam theory?  

\vspace{0.1in} 

The universal extension $R\subset \Z[x_1,\dots, x_N]$ is used to build equivariant link homology. Specializing $N=2$ results in Khovanov homology. Further specializing to a separable degree two characteristic zero field extension $\kk\subset F$ results in Lee homology, i.e., see \cite{Le,KR2}. Lee homology groups depend on linking numbers only, but looking at the degeneration from the universal extension to a field extension allows to pull out the Rasmussen invariant~\cite{Ra} of knot concordance and its variations. This pattern extends to $N>2$, see \cite{Go, Wu, Lo, Lw}. Specializing to separable field extensions results in near-trivial theories, from the topological viewpoint, but the way the universal theory degenerates into those leads to a wealth of information about concordance of knots and links. One can wonder whether more advanced structures in Galois theory and number theory may admit such liftings or deformations relating them to non-trivial low-dimensional topology.

 
\subsection{Overlapping foams and Sylvester double sums} 
\label{subsec_over_sylv}

A straightforward extension of the Robert--Wagner evaluation formula to overlapping 
foams was proposed in~\cite[Section 3]{Kh4}. It allows to interpret the Sergeev--Pragacz formula for the supersymmetric Schur functions (hook Schur functions)~\cite{MJ1}, \cite[Chapter 4]{Mo} and the Day formula for Toeplitz determinants of rational functions  via overlapping foams, see~\cite{Kh4, Da, HJ}. The same paper also suggested a relation between overlapping foam evaluation and resultants and speculated on possible relevance of overlapping foams to categorification of quantum groups.

\vspace{0.1in} 

In this section we explain how to interpret Sylvester double sums and relations on them (the Exchange Lemma) as developed in~\cite{KSV} and earlier work (see references in~\cite{KSV}) via overlapping foams as well. We assume familiarity with Section 3 of~\cite{Kh4}, which we briefly summarize below.

\vspace{0.1in} 

A closed $\GL(N)$ foam $F$ is a decorated combinatorial two-dimensional CW-complex embedded in $\R^3$. It consists of oriented \emph{facets} (connected surfaces) each carrying  a \emph{thickness} from $1$ to $N$. Facets are joined along \emph{seams} where facets of thickness $a$ and $b$ merge into a facet of thickness $a+b$, subject to compatibility of orientations. A foam may contain \emph{vertices}, which are singular points that connect pairs of seams between two different ways of merging three facets of thicknesses $a,b,c$ into a facet of thickness $a+b+c$. A facet of thickness $k$ of a foam may contain dots labeled by symmetric functions in $k$ variables. A foam $F$ evaluates to $\brak{F}$ which is a symmetric polynomial in $N$ variables. We refer to~\cite{RW2,KK} and references in~\cite{KK} for details.

\vspace{0.1in} 

It is useful to label the set of variables by $X$ with $|X|=N$ and view $\brak{F}$ as a symmetric function in these variables, denoting the corresponding ring  of symmetric functions by $\Sym(X)$. 

\vspace{0.1in} 

When $F$ is a connected surface (a single facet) of maximal thickness $N$, with a dot on it labeled by $f(X)\in\Sym(X)$, the evaluation $\brak{F}=f(X)$ does not depend on  the genus of the  surface, see Figure~\ref{fig6_7}.  Of course, for most other foams, including surfaces of thickness less that $N=|X|$, the evaluation will strongly depend on the genera of components of the foam. 

\vspace{0.1in}

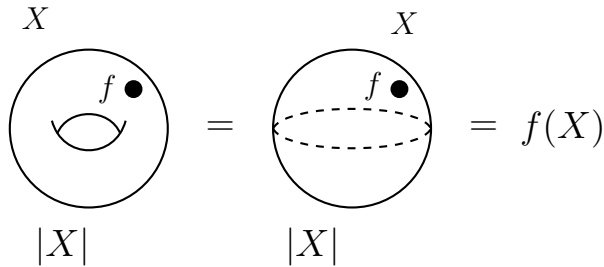
\begin{figure}
    \centering
\begin{tikzpicture}[scale=0.7]


\draw[line width=0.05cm] (3,1) arc (0:360:1.5);

\draw[line width=0.05cm] (0.8,1.15) .. controls (1,.4) and (2,.4) .. (2.2,1.15);

\draw[line width=0.05cm] (.9,.9) .. controls (1.1,1.4) and (1.9,1.4) .. (2.1,.9);

\draw[thick,fill] (2.4,1.75) arc (0:360:0.25);

\node at (1.60,1.75) {\large $f$};

\node at (-0.25,2.5) {\large $X$};

\node at (1,-1.25) {\Large $|X|$};

\node at (4.5,1) {\Large $=$};

\begin{scope}[shift={(1,0)}]

\draw[line width=0.05cm] (8,1) arc (0:360:1.5);

\draw[line width=0.05cm,dashed] (5,1) .. controls (5.25,1.4) and (7.75,1.4) .. (8,1);
\draw[line width=0.05cm,dashed] (5,1) .. controls (5.25,0.6) and (7.75,0.6) .. (8,1);

\draw[thick,fill] (7.45,1.75) arc (0:360:0.25);

\node at (6.6,1.8) {\large $f$};

\node at (9.5,1) {\Large $=$};

\node at (11.5,1) {\Large $f(X)$};

\node at (8.25,2.5) {\large $X$};

\node at (5.75,-1.25) {\Large $|X|$};

\end{scope}
   
\end{tikzpicture}
    \caption{When $F$ is a  single facet foam, of maximal thickness $N=|X|$, it evaluates to the product of symmetric functions over all the dots on $F$. In particular, the evaluation does not depend on the genus of the surface $F$. The figure shows the case of a single dot and surface $F$ having genus  $1$ or $0$. Dashed circle on the sphere is there to depict the sphere schematically (it is not a seam circle on a sphere separating it into two facets of complementary thickness). The latter seam lines appear in the next few figures.    }
    \label{fig6_7}
\end{figure}

In~\cite[Section 3]{Kh4} an extension of this evaluation is proposed when several embedded foams for disjoint sets of variables overlap in $\R^3$. Foam evaluation $\brak{F}$, which is a sum of evaluations $\brak{F,c}$ over all colorings $c$, is modified by scaling $\brak{F,c}$ by  $(x_i-y_j)^{m(i,j,c)}$, where $m(i,j,c)$ is the number of circles in the intersection of the union $F_i(c)$ of facets colored $i$, $x_i \in X$ and the union $F_j(c)$ of facets colored $j$, $y_j\in Y$. The product of these terms is taken over all pairs $X,Y$ and $x_i\in X, y_j\in Y$. Ordering of each pair of sets $(X,Y)$ of foam labelings is fixed to have a well-defined term $x_i-y_j$ versus $y_j-x_i$.

\vspace{0.1in} 

For a closely related notion of an $\SL(N)$ foam and its evaluation, other seam lines are allowed as well, where oriented facets of thickness $a,b,c$ with $a+b+c=N$ or $a+b+c=2N$ meet along seams. Case $N=3$ and foams with $(a,b,c)=(1,1,1)$ seam lines have been treated in details in the literature, but for $N>3$ foam evaluation is mostly considered for $\GL(N)$ foams. See Section~2.3.1 in~\cite{RW1} for a brief discussion on modifying evaluation from $\GL(N)$ to $\SL(N)$ foams, with the caveat that what call $\GL(N)$ foams is referred to as $\mathfrak{sl}_N$ foams in~\cite{RW1}, and our $\SL(N)$ foams are called \emph{generalized foams} in~\cite{RW1}. 

\vspace{0.1in}

Given  finite sets of variables $Y$ and $Z$, define 
\begin{equation} 
\mcR(Y,Z) = \prod_{y\in Y, z\in Z} (y-z), 
\qquad 
\mcR(Y,Z)=1 
\quad  
\mathrm{if} \  Y=\emptyset \ \mathrm{or} \  Z=\emptyset. 
\end{equation}
Note that $\mcR(Y,Z)$ is a polynomial that is symmetric in variables in $Y$ and symmetric in `variables in $Z$, thus 
\begin{equation*}
    \mcR(Y,Z) \ \in \ \Sym(Y)\otimes \Sym(Z), 
\end{equation*}
where $\Sym(Y)$ stands for the ring of symmetric polynomials in $Y$ with coefficients in $\Z$ or in a field $\kk$, likewise for $\Sym(Z)$. Polynomial $\mcR(Y,Z)$ equals the evaluation as in~\cite[Section 3]{Kh4} of overlapping connected surfaces (foams with one facet) labeled by $Y$ and $Z$ and of maximal thickness  $|Y|$ and $|Z|$, respectively, see Figure~\ref{fig6_4}. 

\vspace{0.1in}

\begin{figure}
    \centering
\begin{tikzpicture}[scale=0.7]

\draw[line width=0.05cm] (0,2) .. controls (0.5,3) and (2.5,3) .. (3,2);

\draw[line width=0.05cm] (0,0) .. controls (0.5,-1) and (2.5,-1) .. (3,0);

\draw[line width=0.05cm] (0,0) .. controls (-0.35,0.5) and  (-0.35,1.5) .. (0,2);

\draw[line width=0.05cm,dashed] (3,0) .. controls (3.35,.25) and (3.35,1.75) .. (3,2);

\draw[line width=0.05cm] (0.8,1.25) .. controls (1,.5) and (2,.5) .. (2.2,1.25);

\draw[line width=0.05cm] (.9,1) .. controls (1.1,1.5) and (1.9,1.5) .. (2.1,1);

\draw[line width=0.06cm,dashed,cyan] (3,0) .. controls (2.65,.25) and (2.65,1.75) .. (3,2);

\draw[line width=0.020in,dashed,red] (3,0) .. controls (3.15,.25) and (3.15,1.75) .. (3,2);

\draw[line width=0.020in,red] (3,0) .. controls (2.85,.25) and (2.85,1.75) .. (3,2);

\draw[line width=0.06cm,cyan] (3,2) .. controls (3.5,3) and (5.5,3) .. (6,2);

\draw[line width=0.06cm,cyan] (3,0) .. controls (3.5,-1) and (5.5,-1) .. (6,0);

\draw[line width=0.06cm,cyan] (6,0) .. controls (6.35,0.5) and (6.35,1.5) .. (6,2);



\draw[line width=0.06cm,dashed,cyan] (4.5,-.75) .. controls (5,-.5) and (5,2.5) .. (4.5,2.75);

\draw[line width=0.06cm,dashed,cyan] (4.5,-.75) .. controls (4,-.5) and (4,2.5) .. (4.5,2.75);

\node at (-0.50,2.5) {\Large $Y$};
\node at ( 6.5,2.5) {\color{cyan}\Large $Z$};

\node at (-.75,-.5) {\large $|Y|$};
\node at (6.75,-.5) {\large $|Z|$};

\end{tikzpicture}
    \caption{Overlapped connected surfaces labeled $Y$ and $Z$ of maximal thickness  $|Y|$ and $|Z|$, respectively, evaluate to $\mcR(Y,Z)$. In the picture, the surfaces are a torus and  a sphere. }
    \label{fig6_4}
\end{figure}
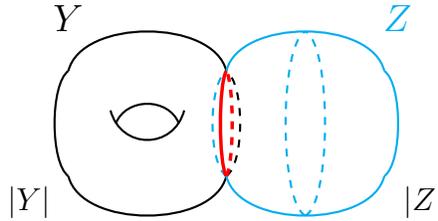

Given sets of variables $A=\{\alpha_1,\dots, \alpha_m\}$ and $B=\{\beta_1,\dots, \beta_n\}$, the Sylvester double sum~\cite{Sy,DHKS,KSV}, for 
$0\le p\le m$ and $0\le q\le n$, is given as follows:
\begin{equation}
    \Syl_{p,q}(A,B)(x) \ := \ \sum_{\substack{A'\subset A, \, B'\subset B, \\
    |A'|=p, \, |B'|=q}} 
    \mcR(A',B')\mcR(A\setminus A',B\setminus B') 
    \frac{\mcR(x,A')\mcR(x,B')}{\mcR(A',A\setminus A')\mcR(B',B\setminus B')}.
\end{equation}
The sum is over all subsets of $A$ and $B$ of cardinality $p$ and $q$. We refer the reader to~\cite{LPr} and the above papers for applications of Sylvester double sums and their relation to subresultants. 

\vspace{0.1in}

Sylvester double sum 
is a polynomial in $x$ of degree at most $d:=p+q$. When $p=0$ or $q=0$, the expression is called a \emph{single sum}. Function $\Syl_{p,q}(A,B)(x)$ is a polynomial in $x$ with coefficients in the ring  $\Sym_{m,n}\cong \Sym(A)\otimes \Sym(B)$ which is the tensor product of rings of symmetric functions in the $m$ variables in $A$ and $n$ variables in $B$, respectively. 

\vspace{0.1in}

To interpret this sum via foam evaluation, we observe that denominator terms may come from seamed 2-spheres, since in foam evaluation their positive Euler characteristics make the corresponding products go into the denominators. These 2-spheres should have seam circles splitting the 2-spheres into pairs of discs of complementary thickness $p, m-p$ for the $A$-variables sphere and $q,n-q$ for the $B$-sphere. This would produce denominator terms $\mcR(A',A\setminus A')$  and $\mcR(B',B\setminus B')$ in the sum. 

\vspace{0.1in}

Furthermore, the 2-spheres should  intersect, to  account  for  the two other terms in the product that do not contain $x$. Finally, to  incorporate $x$, we introduce a third group of variables  $\{x\}$,  in addition to $A$ and  $B$, and  a connected surface  of thickness one for $\{x\}$ that intersects the 2-spheres labeled $A$  and  $B$ in one  circle each, to account for the terms in the product that contain $x$. These three components of the foam are shown in Figure~\ref{fig2_0}.

\vspace{0.1in}

\begin{figure}
\begin{center}
\begin{tikzpicture}[scale=0.6]

\begin{scope}[shift={(0,0)}]
\draw[line width=0.05cm] (3,0) arc (0:360:3);


\draw[line width=0.05cm] (-1.25,0) .. controls (-1,-1) and (1,-1) .. (1.25,0);
\draw[line width=0.05cm] (-1,-.45) .. controls (-.5,1) and (.5,1) .. (1,-.45);

\node at (2.6,2.6) {\Large $x$};
\node at (-2.5,-2.6) {\Large $1$};

\node at (0,-4.25) {\Large $(a)$};
\end{scope}

\begin{scope}[shift={(9,0)}]
\draw[line width=0.05cm] (3,0) arc (0:360:3);
\draw[line width=0.05cm,dashed] (0,-3) .. controls (1,-2.75) and (1,2.75) .. (0,3);
\draw[line width=0.05cm] (0,-3) .. controls (-1,-2.75) and (-1,2.75) .. (0,3);

\node at (-2.2,-3.05) {\Large $p$};
\node at (2.95,-3.05) {\Large $m-p$};

\node at (-2.2,3.05) {\Large $A'$};
\node at (2.95,3.05) {\Large $A\setminus A'$};

\node at (0,-4.25) {\Large $(b)$};
\end{scope}

\begin{scope}[shift={(19,0)}]

\draw[line width=0.05cm] (-4,0) .. controls (-3.75,2) and (3.75,2) .. (4,0);
\draw[line width=0.05cm] (-4,0) .. controls (-3.75,-2) and (3.75,-2) .. (4,0);

\draw[line width=0.05cm,dashed] (0,1.45) .. controls (1.35,1.3) and (1.35,-1.3) .. (0,-1.45);
\draw[line width=0.05cm] (0,1.45) .. controls (-1.35,1.3) and (-1.35,-1.3) .. (0,-1.45);

\node at (-2,-2.1) {\Large $B'$};
\node at (2.5,-2.2) {\Large $B\setminus B'$};

\node at (-2,1.9) {\Large $q$};
\node at (2.5,1.8) {\Large $n-q$};

\node at (0,-4.25) {\Large $(c)$};
\end{scope}

\end{tikzpicture}

\caption{Three components of the double sum foam in Figure~\ref{fig6_5}, from left to right: (a) connected surface (genus is unimportant, chosen to be one) of maximal thickness $1$ carrying variable set $X=\{x\}$, (b) seamed 2-sphere glued from disks of thickness $p$ and $m-p$, respectively, with the variable set $A$, (c) seamed 2-sphere glued from disks of thickness $q$ and $n-q$, respectively, with the variable set $B$. Colorings of (b) are in bijections with $A'\subset A$, $|A'|=p$, colorings of (c) are in bijections with $B'\subset B,$ $|B'|=q$.}
\label{fig2_0}
\end{center}
\end{figure}
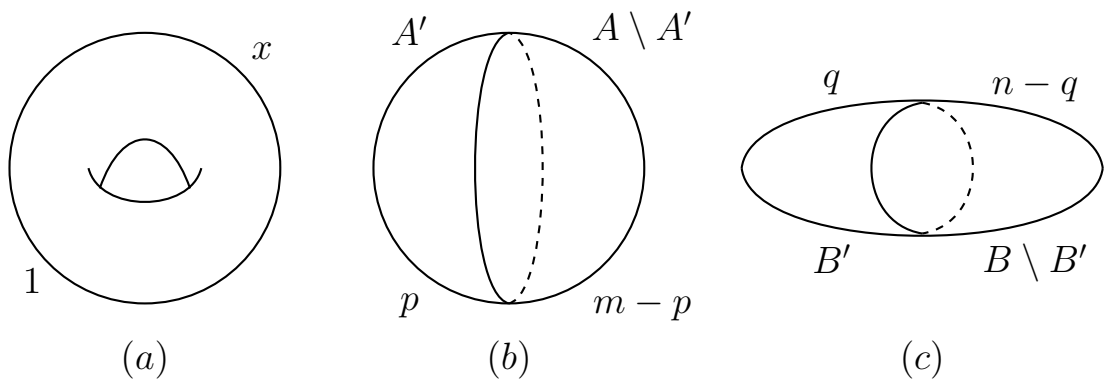

Figure~\ref{fig6_5} shows how these three foams can overlap in $\R^3$, with the resulting evaluation equal to $\Syl_{p,q}(A,B)(x)$.


\begin{figure}
    \centering
\begin{tikzpicture}[scale=0.65]

\draw[line width=0.05cm] (4,0) arc (0:360:4);

\draw[line width=0.05cm] (-14,0) .. controls (-13.8,-4) and (-2.2,-4) .. (-2,0);
\draw[line width=0.05cm] (-14,0) .. controls (-13.8,4) and (-2.2,4) .. (-2,0);

\draw[line width=0.05cm] (-6,0) .. controls (-5.75,-2) and (5.75,-2) .. (6,0);
\draw[line width=0.05cm] (-6,0) .. controls (-5.27,2) and (5.75,2) .. (6,0);

\draw[line width=0.05cm,dashed] (0,-4) .. controls (2,-3.75) and (2,3.75) .. (0,4);
\draw[line width=0.05cm] (0,-4) .. controls (-2,-3.75) and (-2,3.75) .. (0,4);

\draw[line width=0.05cm,dashed] (0,-1.45) .. controls (1,-1.2) and (1,1.2) .. (0,1.45);
\draw[line width=0.05cm] (0,-1.45) .. controls (-1,-1.2) and (-1,1.2) .. (0,1.45);

\begin{scope}[shift={(0.0,-0.125)}]
\draw[line width=0.05cm] (-9.5,0.5) .. controls (-9.25,-0.5) and (-6.75,-0.5) .. (-6.5,0.5);
\draw[line width=0.05cm] (-9,0) .. controls (-8.75,1) and (-7.25,1) .. (-7,0);
\end{scope}

\node at (-14.1,1.5) {\large $x$};
\node at (-8,2.10) {\large $1$};

\node at (-1.5,3) {\large $p$};
\node at (2.4,1.9) {$m-p$};
\node at (-1.80,0.75) {\large $q$};
\node at (2.75,0.65) {$n-q$};

\node at (-2.3,4) {\Large $A'$};
\node at (3.20,3.9) {\Large $A\setminus A'$};
\node at (6.95,-1) {\Large $B\setminus B'$};
\node at (-2,-2) {\Large $B'$};

\draw[thick,fill,brown] (-3.6, 1.1) arc (0:360:0.25);
\draw[thick,fill,brown] (-3.6,-1.1) arc (0:360:0.25);

\draw[thick,fill,magenta] (-3.15, 2.00) arc (0:360:0.25);
\draw[thick,fill,magenta] (-3.15,-2.00) arc (0:360:0.25);

\draw[thick,fill,cyan] (-2.35, 1.35) arc (0:360:0.25);
\draw[thick,fill,cyan] (-2.35,-1.35) arc (0:360:0.25);

\draw[thick,fill,blue] (4.05, 1.2) arc (0:360:0.25);
\draw[thick,fill,blue] (4.05,-1.2) arc (0:360:0.25);

\end{tikzpicture}
    \caption{Foam evaluating to  $\Syl_{p,q}(A,B)(x)$.  Four intersection circles of three components are shown schematically, as pairs of points of four different colors (blue, red, brown, cyan). The two seamed 2-spheres intersect along two circles (indicated as pairs of blue and brown points), and the third surface (shown as a torus, but its genus is irrelevant for the evaluation) intersects each  seamed 2-sphere along a circle (indicated as red and cyan pairs of points). A coloring of this foam consist of assigning a subset $A'\subset A$ of cardinality $p$ to the left disk of the $A$ sphere and a subset $B'\subset B$ of cardinality $q$ to the left disk of the $B$ sphere.}
    \label{fig6_5}
\end{figure}
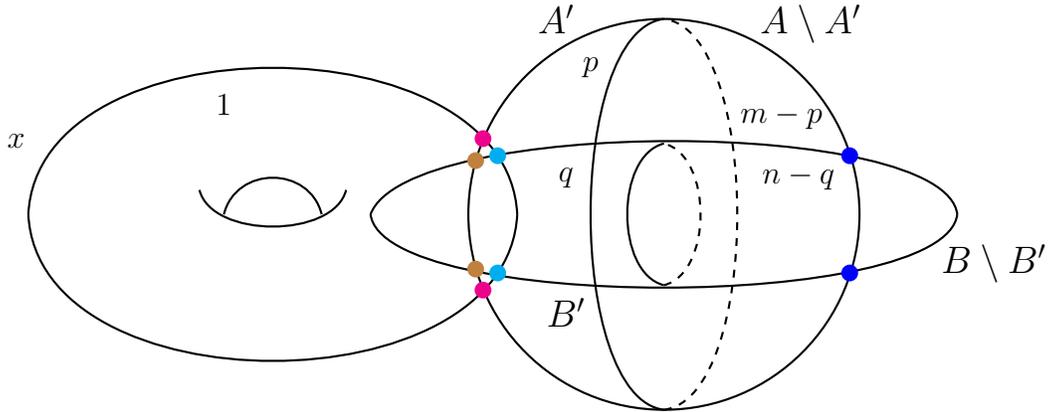

\begin{figure}
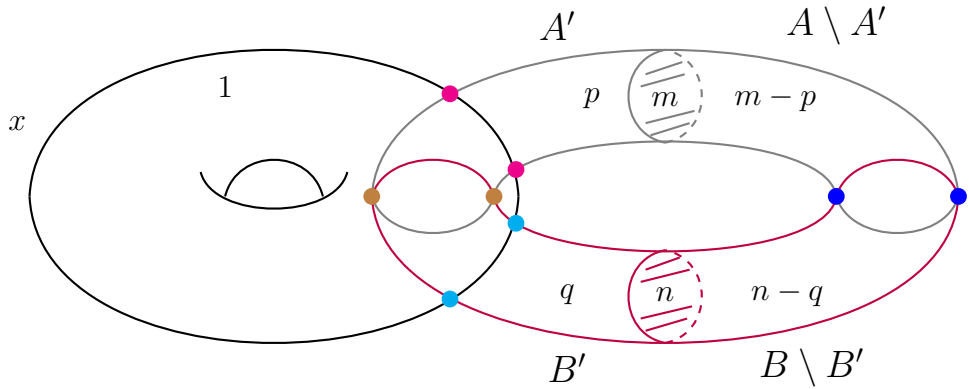

    \centering

    \caption{$\GL$ version of the foam in Figure~\ref{fig6_5}. $A$-foam (shown in grey) and $B$-foam (shown in purple) are theta-foams, with one disk of maximal thickness in each (shaded disks labeled $m$ and $n$). Intersection circles are schematically depicted by pairs identically colored points.
    }
    \label{fig6_8}
\end{figure}

\begin{remark}
In our evaluation of 2-spheres  with a seam line separating disks with complementary thickness we are tacitly considering $\SL$ evaluation. To convert to $\GL$ evaluation, 2-spheres  should be changed into theta-foams with one disk facet of maximal thickness. The relation is shown in Figure~\ref{fig6_6}. 

\vspace{0.1in} 

For each of the foam configurations in this section, it is easy to find an embedding into $\R^3$ that extends to an embedding of the corresponding $\GL$ foam, with seamed 2-spheres becoming theta-foams with the new disk facet of maximal thickness, while preserving the evaluation.  

\end{remark}

\vspace{0.1in}

\begin{figure}
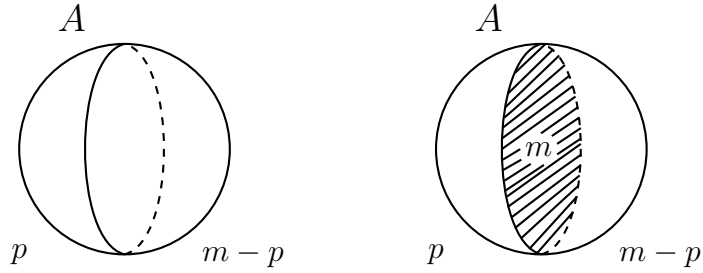

    \centering


    \caption{$\SL(m)$ vs $\GL(m)$ foams. Left: An  $\SL(m)$ foam 2-sphere made of two disks with complementary thicknesses $p$ and $m-p$. Right: A $\GL(m)$ theta-foam obtained by adding a disk of maximal thickness $m$ to the 2-sphere. There are two ways to orient the seam edge in the foams and the two evaluations differ by $(-1)^{p(m-p)}$, see~\cite{RW1,KK}.}
    \label{fig6_6}
\end{figure}

Figure~\ref{fig6_8} shows the $\GL$ version of the foam that evaluates to the Sylvester double sum.

\vspace{0.1in}

\begin{figure}
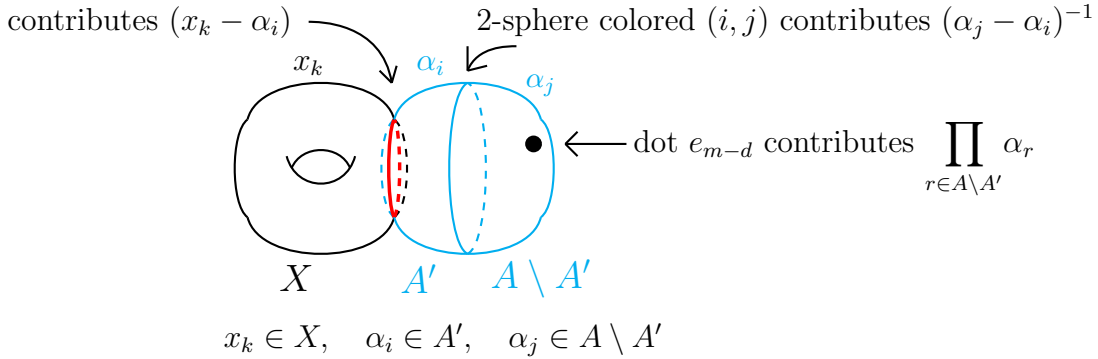

\begin{center}

    \end{center}

    \caption{Foam for the right hand side of the identity (\ref{eq_x_alpha}). The 2-torus $X$ has maximal thickness $m-d=|X|$ and a unique coloring, by $X$. It contributes $1$ to the product. Left disk of 2-sphere is colored by $A'\subset A$, right disk by its complement $A'\setminus A$. The denominator term on the right hand side is the product of $\alpha_j-\alpha_i$, over $\alpha_i\in A'$ and $\alpha_j\in A\setminus A'$. The intersection circle contributes the product of $x_k-\alpha_i$, over all $k=1,\dots, m-d$ and $\alpha_i\in A'$. Dots on the left hand side and right hand side are labeled by the elementary symmetric function of the degree equal to the thickness $m-d$ of the facets and contribute $x_1\cdots x_{m-d}$, respectively product of $\alpha_j\in A\setminus A'$, to the terms. }
    \label{fig_1_2}
\end{figure}

Chen and Louck in \cite[Theorem 2.1]{CL} give a certain polynomial identity for a finite set of variables $A=\{\alpha_1,\dots, \alpha_m\}$ and set of variables $X=\{ x_1,\dots, x_{m-d}\}$. This is an identity in the ring of rational functions $\Q(\alpha_1,\dots, \alpha_m,x_1,\dots, x_{m-d})$: 
\begin{equation}\label{eq_x_alpha}
    x_1\cdots x_{m-d} = \sum_{A'\subset A, \:  |A'|=d} 
    \left( \prod_{\alpha_j\notin A'} \alpha_j \right) 
    \frac{\displaystyle{\prod_{x_j\in X, \:  \alpha_i\in A'}}(x_j-\alpha_i)}
    {\displaystyle{\prod_{\alpha_j\notin A', \: \alpha_i\in A'}} (\alpha_j-\alpha_i)}.
\end{equation}

More generally, they have  the  formula  
\begin{equation}\label{eq_x_alpha_2}
    f(X) = \sum_{A'\subset A, \: |A'|=d} 
    f(A\setminus A')  
    \frac{\displaystyle{\prod_{x_j\in X, \:  \alpha_i\in A'}}(x_j-\alpha_i)}
    {\displaystyle{\prod_{\alpha_j\notin A', \:  \alpha_i\in A'}} (\alpha_j-\alpha_i)}
\end{equation}
for a symmetric polynomial $f$ in $m-d$ variables such that the degree of $f$ in any of its variables is at most $d$. When $m=d+1$, so that $X=\{x_1\}$, their formula specializes to the classical Lagrange interpolation formula for a one-variable polynomial of degree at most $d$, see~\cite{CL}. 

\vspace{0.1in} 

Foam equivalents of formulas \eqref{eq_x_alpha} and \eqref{eq_x_alpha_2} are depicted in Figures~\ref{fig_1_1} and~\ref{fig_1_1_gen}, correspondingly. $X$ foams there have maximal thickness $m-d=|X|$ and a  surface of any genus can be chosen in place of a torus for that component. 
Figure~\ref{fig_1_2} shows in detail why the foam in the right hand side of Figure~\ref{fig_1_1} evaluates to the right hand side of formula \eqref{eq_x_alpha}. 


\vspace{0.1in} 

An important role in~\cite{KSV} and several related papers is played by the \emph{Exchange Lemma}. To state it, following~\cite{KSV}, 
take $A$ and $B$ to be disjoint sets of cardinalities $m$ and $n$, respectively. Then  

\begin{equation}\label{eq_exchange}
    \sum_{A'\subset A, \: |A'|=d} \mcR(A\setminus A',B) \frac{\mcR(X,A')}{\mcR(A\setminus A',A')}  = 
    \sum_{B'\subset B, \: |B'|=d} 
    \mcR(A,B\setminus B')\frac{\mcR(X,B')}{\mcR(B',B\setminus B')}, 
\end{equation}

\begin{figure}[ht]
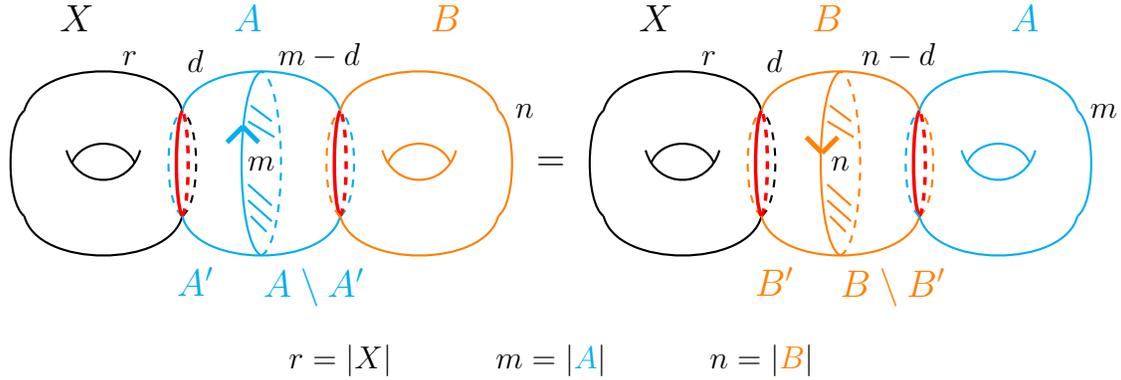

\begin{center}


\end{center}

\caption{Exchange Relation written via foam evaluation. Seam circles of blue and orange  theta-foams are oriented  oppositely, to incorporate implicit sign in formula \eqref{eq_exchange} that appears if in one of the denominators the order of a set and its complement is reversed, see~\cite{KSV}. $X$ and $B$ foams on the left hand side and $X$ and $A$ foams on the right hand side may carry any genus; we chose genus $1$ for all four. }
\label{fig_1_3}
\end{figure}

Foam interpretation of the both sides of this identity  is  shown  in Figure~\ref{fig_1_3}. 

\vspace{0.1in}

\begin{figure}
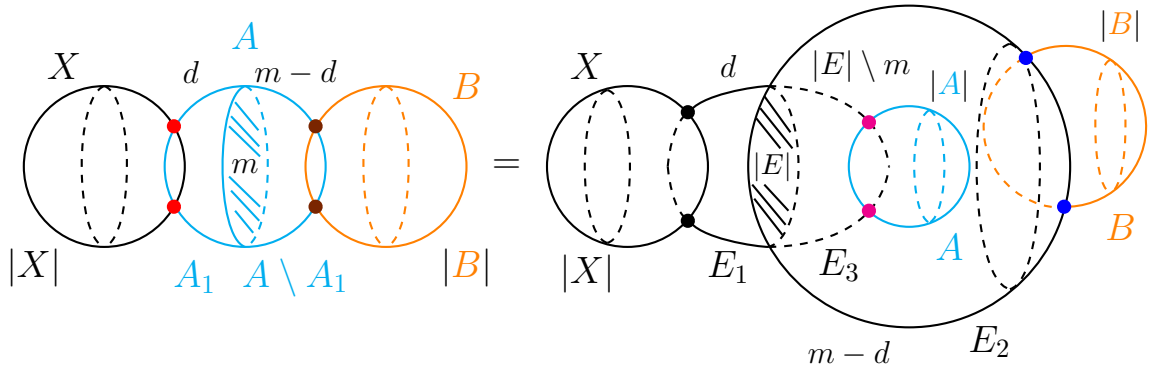

    \centering

    \caption{Foams for the formula~\eqref{DKSV-prop2-1}, also see~\cite[Proposition 2.1]{DKSV}. On the right hand side, generalized theta-foam in the middle for variable set $E$ consists of a thickness $|E|$ disk with three adjacent disks of thicknesses $d$, $m-d$ and $|E|-m$, respectively. Each of these three disks intersects one of the spheres for variable sets $A,B,X$.  }
    \label{fig6_9}
\end{figure}

As another example, consider the formula in~\cite[Proposition 2.1]{DKSV}. To state it, 
let $A,B$ be finite sets with $|A|=m$ and $|B|=n$, and choose $0\leq d\leq m$. Let $X,E$ be finite sets such that  
\[ 
|E| \geq \max \{|X|+d\, ,\:  m+n-d\, ,  \:  m  \}. 
\] 
Then 
\begin{equation}
\label{DKSV-prop2-1}
\begin{split}
&\sum_{
    \substack{
    A_1\sqcup A_2=A\\ 
    |A_1|=d, \:\:  |A_2|=m-d
    }
}
 \frac{\mcR(A_2,B)\mcR(X,A_1)}{\mcR(A_1,A_2)} = \\
= &\sum_{
    \substack{
    E_1\sqcup E_2\sqcup E_3=E \\ 
    |E_1|=d,\:\: |E_2|=m-d,\:\:  |E_3|=|E|-m
    }
}
\frac{\mcR(A,E_3)\mcR(E_2,B)\mcR(X,E_1)}{\mcR(E_1,E_2)\mcR(E_1,E_3)\mcR(E_2,E_3)}.
\end{split}
\end{equation}

Foam interpretation of this formula is shown in Figure~\ref{fig6_9}. In the evaluation of overlapping foams we assume that variable sets (in this  example, $A,B,X,E$) are disjoint, but perhaps this condition can be relaxed (formula (\ref{DKSV-prop2-1}) holds as well when  these sets have non-empty intersections, see~\cite{DKSV}). 

\vspace{0.1in} 

We leave it to the reader to write a similar foam interpretation of the relation between Sylvester double and single sums, see formula (2) in~\cite{DKSV}. 

\vspace{0.1in} 

From the present examples and those in~\cite[Section 3]{Kh4} one can make a natural guess that there exists a meaningful theory of overlapping foams, but it is not clear to the authors how to develop it. One possible direction is to use an extension of Sylvester's subresultants to polynomials with multiple roots constructed in~\cite{DKS,DKSV} to search for the symmetric analogue of the Robert--Wagner foam evaluation~\cite{RW1}.  Robert--Wagner work and many prior papers (see~\cite{KK} for an  incomplete survey) deal with \emph{exterior foams} that are used to categorify networks on intertwiners between quantum exterior powers of the fundamental $U_q(\mathfrak{sl}_N)$ representations. Papers~\cite{Ca,QRS,RW2} indicate that a similar theory should exists for \emph{symmetric foams} that would   categorify networks of quantum symmetric powers of the fundamental representation, but a definition and  evaluation of symmetric foams is unknown as of today.

%
%

\section{Appendix (joint with Lev Rozansky): Comparison with matrix factorizations} 
\label{subsec_mf}
 Each finite degree field extension $\kk\subset F$ is Frobenius. Any nonzero $\kk$-linear map $\varepsilon:F\lra \kk$ is a non-degenerate trace making $F$ a commutative Frobenius algebra over $\kk$. For separable extensions, there is a canonical trace $\tr_{F/\kk}$ used above. 

\vspace{0.1in} 

Matrix factorizations deliver a supply of commutative Frobenius algebras and two-dimensional TQFTs with corners~\cite{KRz,CM,DM}. 
A nondegenerate potential $w\in \kk[x_1,\dots, x_k]$ defines the Jacobi algebra 
\begin{equation}
    J(w) \ := \ \kk[x_1,\dots, x_k]/(\partial_1 w, \dots, \partial_k w ), 
    \quad  
    \partial_i w := \partial w/\partial x_i
\end{equation}
(a potential is called nondegenerate when this quotient algebra is finite-dimensional). The Jacobi algebra is commutative Frobenius and carries a canonical trace $\trG$, given by the Grothendieck residue, {\it i.e.}, see \cite{AGV, GH}. When $\kk$ is a subfield of $\C$, the trace may be written as a complex integral
\begin{equation}
    \trG (p(\undx)) = \frac{1}{(2\pi i)^k}\int_{|\partial_i w|=R} \frac{p(\undx)}{\partial_1 w\cdots \partial_k w}dx_1\cdots dx_k, 
    \qquad  
    p(\undx)\in \kk[x_1,\dots,  x_k]
\end{equation}
over a contour that contains all roots of the  system of equations $\partial_1 w=\ldots = \partial_k w=0$.

\vspace{0.1in} 

Suppose that $F$ is a subfield of $\C$ (in particular, $\mchar(\kk)=0$). Since $F/\kk$ is a simple extension, there is a generating element $\alpha\in F,$  $\kk(\alpha)=F$, and 
\begin{equation}
    F \cong \kk[x]/(f(x)),
\end{equation}
where $f$ is the minimal polynomial of $\alpha$ over $\kk$, 
\begin{equation}
    f(x) = x^n + a_{n-1}x^{n-1}+\ldots + a_0, 
    \qquad  
    a_i \in \kk. 
\end{equation}
We can realize $F$ as the Jacobi algebra of the singularity with the potential $w(x)$ in a single variable $x$ such that $w'(x)=f(x)$, 
\begin{equation}
    w(x) = \frac{1}{n+1}x^{n+1} +\frac{a_{n-1}}{n}x^n + \ldots + a_0 x.
\end{equation}
The polynomial $f(x)$ is irreducible over $\kk$ and can be fully  factored in the algebraic closure $\okk\subset \C$: 
\begin{equation}
    f(x) \ = \ (x-\lambda_1)\cdots (x-\lambda_n), 
    \qquad  
    \lambda_i\in \okk, 
    \quad 
    \lambda_i\not= \lambda_j. 
\end{equation}
The Hessian 
\begin{equation}
    w''(x) = f'(x) = \sum_{i=1}^n \prod_{j\not= i} (x-\lambda_j),
\end{equation}
and 
\begin{equation}
    w''(\lambda_i) \ = \ \prod_{j\not= i} (\lambda_i-\lambda_j).
\end{equation}
For a single variable $x$ and a potential $w(x)$ with $w'(x)=f(x)$ having simple roots only, 
the Grothendieck trace is given by 
\begin{equation}
    \trG(p(x)) = 
    \frac{1}{2\pi i} \int_{|f(x)|=R}\frac{p(x)}{f(x)} dx  = 
    \sum_{i=1}^n \frac{p(\lambda_i)}{\displaystyle{\prod_{j\not=i}} (\lambda_i-\lambda_j)}, 
    \qquad   
    p(x)\in \kk[x],  \ \ R\gg 0.  
\end{equation}


To compare the two traces, note that the 
canonical trace $\tr_{F/\kk}$ in a finite separable field extension can also be characterized as follows. The tensor product $F\otimes_{\kk} \okk$ of $F$ with the algebraic closure $\okk$ of $\kk$ is isomorphic to the direct product of $n$ copies of $\okk$, where $n$ is the degree $[F:\kk]$, 
\begin{equation}
    F\otimes_{\kk} \okk \ \cong \  \okk\times \dots \times \okk.
\end{equation}
This algebra contains $n$ minimal idempotents $e_1,\dots, e_n$, one for each term in the product. 
Trace $\tr_{F/\kk}$ extends $\okk$-linearly to a trace 
\begin{equation*}
\overline{\tr}_{F/\kk} \ : \ F\otimes_{\kk}\okk\lra \okk
\end{equation*} 
that is characterized uniquely by its taking value $1$ on each minimal idempotent, $\overline{\tr}_{F/\kk}(e_i)$ $=$ $1$.

\vspace{0.1in} 

Any other nondegenerate trace $\varepsilon: F\lra \kk$ extends likewise to a $\okk$-linear trace 
\begin{equation*}
\overline{\varepsilon}: F\otimes_{\kk}\okk\lra \okk
\end{equation*}
taking a nonzero value on each idempotent $e_i$, with at least one of these values different from $1$. 

\vspace{0.1in} 

Minimal idempotents $e_k(x) \in \okk[x]/(f(x))$ are given by 
\begin{equation}
    e_k(x) \ = \ \prod_{j\not= k} \frac{x-\lambda_j}{\lambda_k-\lambda_j}. 
\end{equation}
Indeed, $e_k(\lambda_i)=\delta_{i,k}$, so these are delta functions when evaluated on the roots of $f(x)$.  Evaluating the Grothendieck trace on them gives
\begin{equation}
    \trG(e_k(x)) = \sum_{i=1}^n \frac{e_k(\lambda_i)}{
    \displaystyle{\prod_{j\not= i}}(\lambda_i-\lambda_j)}=\frac{1}{\displaystyle{\prod_{j\not= k}}(\lambda_k-\lambda_j)}.
\end{equation}
Thus, values of the two traces on minimal idempotents are 
\begin{equation}
\overline{\tr}_{F/\kk}(e_k)=1, 
\qquad 
\trG(e_k) = \prod_{j\not= k}  \frac{1}{(\lambda_k-\lambda_j)}, 
\qquad 
1\le k\le n,
\end{equation}
and the field extension trace can be written as 
\begin{equation}
    \trG(p(x)) = 
    \frac{1}{2\pi i} \int_{|f(x)|=R}\frac{w''(x) p(x)}{w'(x)}dx = 
    \sum_{i=1}^n p(\lambda_i), 
    \qquad 
    p(x)\in \kk[x]. 
\end{equation}
Notice that we added the Hessian $w''(x)$ to the numerator of the integral and kept the denominator. We see that the two traces differ by multiplication by the Hessian, 
\begin{equation}\label{eq_hessian}
    \tr_{F/\kk}(p(x)) \ = \ \trG(w''(x)p(x)). 
\end{equation}
The second and first derivatives $w''(x),w'(x)$ have no common roots, since all roots of $w'(x)=f(x)$ are simple, and $w''(x)$ is an invertible element of $\kk[x]/(f(x))\cong F$ (the latter ring is a field anyway). In the 2D TQFT of the Landau--Ginzburg model for the potential $w(x)$ the value of a one-holed torus, as an element of the Jacobi algebra (the state space of the circle), is the Hessian $w''(x)$, see Figure~\ref{fig5_3_1}.

\vspace{0.1in}

\begin{figure}
    \centering
\begin{tikzpicture}[scale=0.6]
\draw[thick] (0,0) .. controls (0.25,1) and (3.75,1) .. (4,0);
\draw[thick] (0,0) .. controls (0.25,-1) and (3.75,-1) .. (4,0);

\draw[thick] (0,0) .. controls (0,-1) and (-1,-3) .. (0,-4); 
\draw[thick] (4,0) .. controls (4,-1) and (5,-3) .. (4,-4); 
\draw[thick] (0,-4.01) .. controls (0.5,-5) and (3.5,-5) .. (4,-4.01);

\draw[thick] (2.5,-1.5) .. controls (1.5,-2) and (1.5,-3) .. (2.5,-3.5);
\draw[thick] (2,-1.9) .. controls (2.75,-2.25) and (2.75,-2.75) .. (2,-3.1);

\node at (6,-2) {\Large $=$};

\draw[thick] (7,0) .. controls (7.25,1) and (10.75,1) .. (11,0);
\draw[thick] (7,0) .. controls (7.25,-1) and (10.75,-1) .. (11,0);
\draw[thick] (7,0) .. controls (7.5,-5) and (10.5,-5) .. (11,0);

\draw[thick,fill] (10.15,-2.15) arc (0:360:0.25);
\node at (12,-2.5) {\Large $\omega''(x)$};
\end{tikzpicture}
    \caption{One-holed torus represents the element $w''(x)$ in the Jacobi algebra of a one-variable potential.}
    \label{fig5_3_1}
\end{figure}
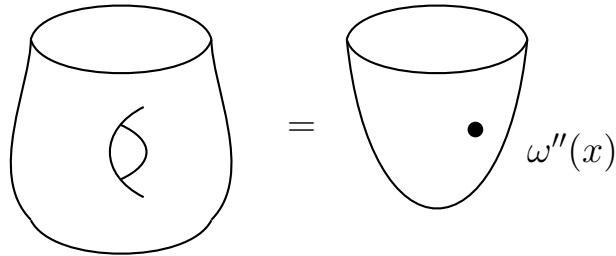

Consequently, the field extension trace (the map induced by the cap in the TQFT associated to $(F,\kk,\tr_{F/\kk})$) can be written as the cap with the genus one surface (holed torus) in the Landau--Ginzburg TQFT associated to a given generating element $\alpha\in F$, as described earlier, see Figure~\ref{fig5_3_2}.

\vspace{0.1in}

\begin{figure}
    \centering
\begin{tikzpicture}[scale=0.6]
 
\node at (-7.5,3) {\Large $\tr_{F/\kk}$};

\draw[thick,dashed] (-7,0) .. controls (-6.75,1) and (-3.25,1) .. (-3,0);
\draw[thick] (-7,0) .. controls (-6.75,-1) and (-3.25,-1) .. (-3,0);
\draw[thick] (-7,0) .. controls (-6.5,5) and (-3.5,5) .. (-3,0);

\node at (-2,2) {\Large $=$};

\draw[thick,dashed] (0,0) .. controls (0.25,1) and (3.75,1) .. (4,0);
\draw[thick] (0,0) .. controls (0.25,-1) and (3.75,-1) .. (4,0);

\draw[thick] (0,0) .. controls (0,1) and (-1,3) .. (0,4); 
\draw[thick] (4,0) .. controls (4,1) and (5,3) .. (4,4); 
\draw[thick] (0,4.01) .. controls (0.5,5) and (3.5,5) .. (4,4.01);

\draw[thick] (2.5,1.5) .. controls (1.5,2) and (1.5,3) .. (2.5,3.5);
\draw[thick] (2,1.9) .. controls (2.75,2.25) and (2.75,2.75) .. (2,3.1);

\node at (6,2) {\Large $=$};

\draw[thick,dashed] (7,0) .. controls (7.25,1) and (10.75,1) .. (11,0);
\draw[thick] (7,0) .. controls (7.25,-1) and (10.75,-1) .. (11,0);
\draw[thick] (7,0) .. controls (7.5,5) and (10.5,5) .. (11,0);

\draw[thick,fill] (10.15,2) arc (0:360:0.25);
\node at (12,2.5) {\Large $\omega''(x)$};

\node at (8,5.1) {\large MF/LG TQFT maps};

\end{tikzpicture}
    \caption{Cap given by the field extension trace equals the genus one cap trace in the matrix factorization (Landau-Ginzburg) TQFT, for any choice of generator $x$ and the corresponding potential $w(x)$.}
    \label{fig5_3_2}
\end{figure}
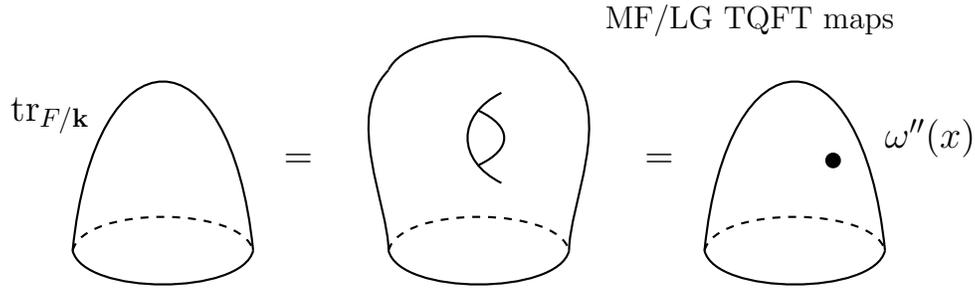

Choosing a different generator $\alpha$ for $F$ will, in general, change the polynomial $f(x)$, potential $w(x)$ and the value of the trace on idempotents of $F\otimes_{\kk}\okk$, while the trace $\tr_{F/\kk}$ is defined canonically. At the same time, it is given by capping off by the holed torus, in any one-variable matrix factorization TQFT realization of $F$ as the Jacobi algebra. 

\vspace{0.1in}

This amusing relation between matrix factorizations and field extensions may be worth a further exploration. Notice, in particular, that $F $ may be realized as the Jacobi algebra, $F\cong J(w)$,  for a multivariable potential $w(\undx)\in \kk[x_1,\dots,x_k]$. Equivalently, $F$ is the zero-dimensional complete intersection of hypersurfaces $\partial_i w=0$, $i=1,\dots, k$. It should be interesting to find nontrivial presentations of that kind for various $F$ with $k>1$ or locate them in the literature. 

\vspace{0.1in} 

The Jacobi algebra $J(w)$ is the endomorphism ring of the canonical matrix factorization
\begin{equation} 
K(w) \ = \ \bigotimes_{i=1}^k K(x_i-y_i,u_i), 
\end{equation} 
a Koszul factorization with the potential $w_{12}=w(\undx)-w(\undy)$ in $2n$ variables $x_1,\dots, x_n$, $y_1,\dots, y_n$. Here $u_i$ are any polynomials in $x$'s and $y$'s such that 
\begin{equation*}
    w_{12} = \sum_{i=1}^k (x_i-y_i)u_i, 
\end{equation*}
and $K(v,u)$ is the factorization
\begin{equation*}
    \kk[\undx,\undy]\stackrel{v}{\lra} \kk[\undx,\undy]
    \stackrel{u}{\lra} \kk[\undx,\undy],
\end{equation*}
see~\cite{KRz}. Matrix factorization $K(w)$ represents the identity functor on the triangulated category $\MF(w)$ of matrix factorizations with potential $w$ and morphisms being homs of matrix factorizations modulo homotopies~\cite{KRz}.  

\vspace{0.1in} 

Assume that $F/\kk$ is a finite Galois extension in characteristic $0$ and consider the Galois group $G=\Gal(F/\kk)$. One can ask to find presentations of $F$ as the Jacobi algebra $J(w)$ such that Galois symmetries $\sigma\in G$ lift to endofunctors of $\MF(w)$ defining an action of the Galois group on that category.  Precisely, for each $\sigma$ we would like to have a matrix factorization $M(\sigma)=M_{\undx,\undy}(\sigma)$ (using subindices to specify sets of variables) with the potential $w(\undx)-w(\undy)$ together with isomorphisms in the 
homotopy category $\MF(\undx-\undz)$ of matrix factorizations with the potential $w(\undx)-w(\undz)$
\begin{equation}\label{eq_mf_isomo}
    M_{\undx,\undy}(\sigma)\otimes_{\undy}M_{\undy,\undz}(\tau) \cong M_{\undx,\undz}(\sigma\tau), \ \ \sigma,\tau\in G,
\end{equation}
such that $M_{\undx,\undy}(1)\cong K(w)$ . One can further require that these isomorphisms satisfy compatibility relations so that $G$ acts on $\MF(w)$ in a strong sense. Furthermore, factorization $M(\sigma)$ should induce the symmetry $\sigma$ on $J(w)\cong F$ upon taking the trace of the identity endomorphism of $M(\sigma)$. Diagrammatically, following notations from~\cite{KRz},  denote $M(\sigma)=M_{\undx,\undy}(\sigma)$ by a dot labeled $\sigma$ on an oriented line with endpoints labeled $\undx,\undy$, see Figure~\ref{fig5_5_1}.

\vspace{0.1in}

\begin{figure}
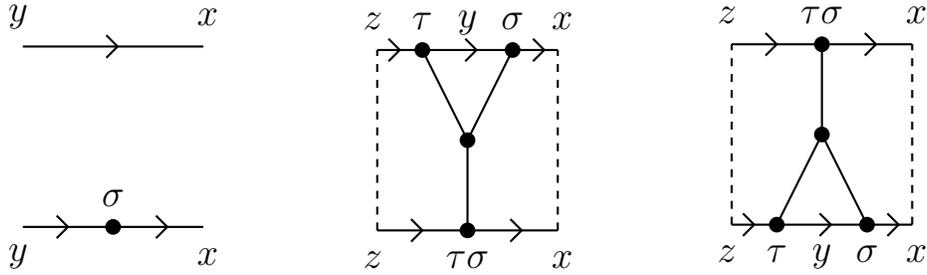

    \centering

    \caption{Top left arc represents the identity factorization $K(w)$. Bottom left arc carrying dot $\sigma$ represents the factorization $M(\sigma)$. Trivalent vertices on the middle and right pictures show mutually-inverse isomorphisms \eqref{eq_mf_isomo}.  }
    \label{fig5_5_1}
\end{figure}

The identity map of $M(\sigma)$ is depicted by a defect interval, shown as a vertical interval in  Figure~\ref{fig5_5_2} left. Taking the trace of the identity map corresponds, on the diagrammatic side, to closing of the square into an annulus with a defect circle on it, see Figure~\ref{fig5_5_2}.

\vspace{0.1in}

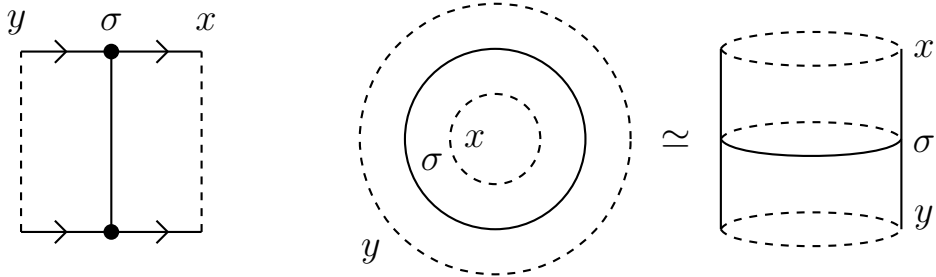
\begin{figure}[ht]
    \centering
\begin{tikzpicture}[scale=0.6,decoration={
    markings,
    mark=at position 0.50 with {\arrow{>}}}]

\begin{scope}[shift={(0,0)}]

\draw[thick,postaction={decorate}] (0,0) -- (2,0);

\draw[thick,postaction={decorate}] (2,0) -- (4,0);

\draw[thick,postaction={decorate}] (0,4) -- (2,4);

\draw[thick,postaction={decorate}] (2,4) -- (4,4);

\draw[thick,dashed] (0,0) -- (0,4);
\draw[thick,dashed] (4,0) -- (4,4);

\draw[thick] (2,0) -- (2,4);

\node at (-0.1,4.65) {\Large $y$};
\node at ( 4.1,4.65) {\Large $x$};

\node at (   2,4.70) {\Large $\sigma$};

\draw[thick,fill] (2.25,4) arc (0:360:0.25);

\draw[thick,fill] (2.25,0) arc (0:360:0.25);

\end{scope}

\begin{scope}[shift={(11.5,2)}]

\draw[thick,dashed] (1,0) arc (0:360:1);
\draw[thick] (2,0) arc (0:360:2);
\draw[thick,dashed] (3,0) arc (0:360:3);

\node at (-0.45,0) {\Large $x$};
\node at (-1.4,-0.5) {\Large $\sigma$};
\node at (-2.75,-2.5) {\Large $y$};

\node at (4.5,0) {\Large $\simeq$};

\begin{scope}[shift={(1,0)}]

\draw[thick,dashed] (5,2) .. controls (5.25,2.5) and (8.75,2.5) .. (9,2);
\draw[thick,dashed] (5,2) .. controls (5.25,1.5) and (8.75,1.5) .. (9,2);
\node at (9.60,2) {\Large $x$};

\draw[thick,dashed] (5,0) .. controls (5.25,.5) and (8.75,.5) .. (9,0);
\draw[thick] (5,0) .. controls (5.25,-.5) and (8.75,-.5) .. (9,0);
\node at (9.60,-0.1) {\Large $\sigma$};

\draw[thick,dashed] (5,-2) .. controls (5.25,-1.5) and (8.75,-1.5) .. (9,-2);
\draw[thick,dashed] (5,-2) .. controls (5.25,-2.5) and (8.75,-2.5) .. (9,-2);
\node at (9.60,-1.75) {\Large $y$};

\draw[thick] (5,-2) -- (5,2);
\draw[thick] (9,-2) -- (9,2);
\end{scope}

\end{scope}

\end{tikzpicture}
    \caption{Left: the identity endomorphism of $\sigma$. Middle and right: its trace is a  defect circle on an annulus. Boundaries of the annulus correspond to closures of the identity factorization $K(w)$, given by equating variables $\undx=\undy$ in that factorizations and taking cohomology of the resulting 2-periodic complex. Cohomology is precisely the Jacobi algebra $J(w)$, and the annulus with the $\sigma$-circle defines a linear endomorphism of it. }
    \label{fig5_5_2}
\end{figure}

In general, a defect circle on an annulus would only  give a linear endomorphism of the Jacobi algebra, not an algebra homomorphism. For that, we would additionally want the equality shown in Figure~\ref{fig5_5_3} left, which may  come from a more local relation in Figure~\ref{fig5_5_3} right. 

\vspace{0.1in}

\begin{figure}
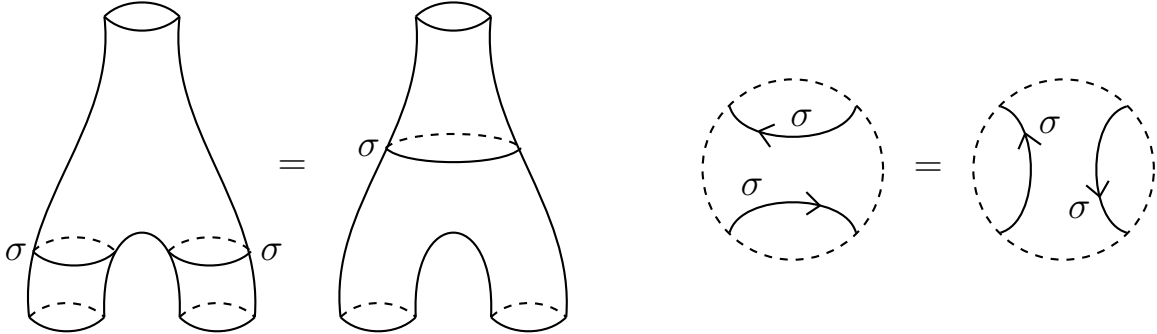

    \centering

    \caption{Left: $\sigma$-circle defining an algebra endomorphism of $J(w)$. Right: a sufficient local relation for that.   }
    \label{fig5_5_3}
\end{figure}

For a general separable finite field extension $F/\kk$, it seems hard to impossible to pick a potential $\omega\in \kk[x_1,\dots, x_n]$ with the Jacobi algebra $J(\omega)\cong F$ and invertible factorizations $M(\sigma), \sigma\in\Gal(F/\kk)$,  giving an action of the Galois group $\Gal(F/\kk)$ on the homotopy category of matrix factorizations $\HMF_{\omega}$ with potential $\omega$ such that the action induces the Galois group action on $F$. 

\vspace{0.1in} 

Potentially related structures appear in the theory of Landau--Ginzburg orbifolds, where a group $G$ acts on $\kk[x_1,\dots, x_k]$ preserving the potential $w$, leading to the category of $G$-equivariant matrix factorizations. In those examples usually $\kk=\CC$, and it is unclear whether some version of LG orbifold theory may be adapted to relate Galois extensions and matrix factorizations.



\end{document}